\documentclass[10pt,a4paper]{article}
\usepackage[svgnames]{xcolor}
\usepackage{mdframed}
\usepackage[utf8]{inputenc}
\usepackage{amsmath, amsthm, amssymb, bbm, bm}
\usepackage{stmaryrd}
\usepackage{tikz-cd}
\usepackage{enumerate}
\usepackage{appendix}
\usepackage[backend=bibtex,style=ieee]{biblatex}
\usepackage[hidelinks]{hyperref}

\usepackage{color}

\definecolor{mygray}{gray}{0.75}

\newtheorem{tvrz}{Proposition}[section]
\newtheorem{lemma}[tvrz]{Lemma}
\newtheorem{theorem}[tvrz]{Theorem}

\theoremstyle{definition}
\newtheorem{definice}[tvrz]{Definition}
\theoremstyle{remark}
\newtheorem{rem}[tvrz]{Remark}
\theoremstyle{definition}
\newtheorem{mdexample}[tvrz]{Example}
\newenvironment{example}%
{\begin{mdframed}[topline=false, rightline=false, bottomline=false, linewidth=0.2em, linecolor=mygray, innerleftmargin=0.5em, innerrightmargin=0,leftmargin=-0.7em]\begin{mdexample}}%
{\end{mdexample}\end{mdframed}}

\def\^{\wedge}
\def\<{\langle}
\def\>{\rangle}
\def\~{\widetilde}

\def\D{\mathcal{D}}
\def\M{\mathcal{M}}
\def\cN{\mathcal{N}}
\def\C{\mathcal{C}}

\def\X{\mathfrak{X}}
\def\R{\mathbb{R}}
\def\N{\mathbb{N}}
\def\Z{\mathbb{Z}}

\def\da{\mathsf{a}}
\def\ds{\mathsf{s}}
\def\bbz{\mathbbm{z}}
\def\bbt{\mathbbm{t}}
\def\dr{\mathrm{d}}
\def\fp{\mathbf{p}}
\def\fm{\mathbf{m}}
\def\fq{\mathbf{q}}
\def\fr{\mathbf{r}}
\def\fI{\mathbf{I}}
\def\fA{\mathbf{A}}
\def\fM{\mathbf{M}}
\def\bbD{\mathbb{D}}
\def\sfD{\mathsf{D}}

\DeclareMathOperator{\Op}{\mathbf{Op}}

\def\1{\mathbbm{1}}
\def\ssm{\smallsetminus}
\def\fnula{\mathbf{0}}
\def\f1{\mathbf{1}}
\def\ddt{\left. \frac{\dr}{\dr{t}}\right|_{t=0} \hspace{-0.5cm}}

\def\ol{\overline}
\def\ul{\underline}

\bibliography{bib}
\begin{document}
\begin{flushright}
\today
\end{flushright}
\vspace{0.7cm}
\begin{center}

\baselineskip=13pt {\Large \bf{Flows on Graded Manifolds}\\}
 \vskip0.5cm
 {\large{Rudolf Šmolka$^{1}$, Jan Vysoký$^{1}$}}\\
 \vskip0.6cm
$^{1}$\textit{Faculty of Nuclear Sciences and Physical Engineering, Czech Technical University in Prague\\ Břehová 7, 115 19 Prague 1, Czech Republic, jan.vysoky@fjfi.cvut.cz}\\
\vskip0.3cm
\end{center}

\begin{abstract}
Flows of vector fields are an essential tool in differential geometry, with countless applications in both theory and practice. While they have been extensively studied for ordinary manifolds and supermanifolds, a treatment of flows in $\mathbb{Z}$-graded geometry is currently missing. $\mathbb{Z}$-graded manifolds constitute a natural generalization of supermanifolds, allowing functions in coordinates of arbitrary integer degree.

In this paper, flows of vector fields on $\mathbb{Z}$-graded manifolds are defined, and it is proved that every vector field admits a unique maximal flow (in the odd case, under the assumption that the vector field is homological). Vector fields invariant under flows are examined. Conditions under which the flows of two vector fields commute are investigated, and the interaction between flows corresponding to related vector fields is studied.

The main body of the paper is intentionally kept short, and we hope it is accessible to readers with only minimal prior knowledge of $\mathbb{Z}$-graded geometry.
\end{abstract}

{\textit{Keywords}: Vector fields, flows, graded manifolds, supermanifolds, flow domains, homological vector fields, infinitesimal generators, commuting vector fields}.

\section*{Introduction}
In differential geometry, flows of vector fields constitute a fundamental bridge between local and global geometry. Their history traces back to works of S. Lie and H. Poincaré and since then, flows (integral curves, trajectories, orbits) appear throughout mathematics. In a nutshell, they allow one to integrate an infinitesimal object, a smoothly varying collection of tangential directions, to a one-parametric collection of global transformations of spaces. It is beyond the scope of this paper to list all consequences and uses of this observation. Flows relate infinitesimal and global symmetries in physics, provide a geometrical interpretation to time evolution in Hamiltonian mechanics, describe geodesic flows in Riemannian geometry, and appear in basically any theory with some dynamics there is. Last but not least, they are used as a tool in differential geometry. Frobenius theorem, Štefan--Sussmann integrability, orbits of Lie group actions, Darboux theorem in symplectic geometry, etc. 

Supermanifolds were introduced as a geometrical tool allowing for functions in anticommutative Grassmann variables. They appeared in various forms in the works of Berezin and Leites \cite{BerzeinLeites1975}, Kostant \cite{kostant1977graded}, Rogers \cite{rogers1980global} or DeWitt \cite{dewitt1992supermanifolds}. There are several well-established monographs on supergeometry \cite{carmeli2011mathematical, rogers2007supermanifolds, bartocci2012geometry, varadarajan2004supersymmetry, tuynman2004supermanifolds} and many others. Remarkably, out of all of them, it is  only \cite{tuynman2004supermanifolds} that treats flows of vector fields in a more systematic way. On the other hand, flows of vector fields on supermanifolds appeared, to some extent, in many research papers, see \cite{shander1980vector, bruzzo1985differential, hill1991super, dumitrescu2008superconnections}. The crucial references for our approach in this paper are \cite{monterde1988integral, monterde1993existence}. There, the authors construct local flows for (inhomogeneous) vector fields on supermanifolds under certain conditions, in a structured and rigorous way. Finally, the modern and self-contained paper on flows on supermanifolds is \cite{garnier2013integration}. 

$\Z$-graded manifolds form a natural generalization of supergeometry. Recently $\Z$-graded manifolds with local coordinates of arbitrary degrees \cite{Vysoky:2022gm, kotov2024category, fairon2017introduction} were introduced, thus extending the theory of non-negatively graded (or just $\N$-graded) manifolds \cite{Kontsevich:1997vb, severa2001some, Voronov:2019mav} and \cite{mehta2006supergroupoids,2011RvMaP..23..669C}. In the following, \textit{graded} will always mean \textit{$\Z$-graded}. To our knowledge, flows of vector fields on graded manifolds were not considered in any of the above references. 

The aim of this paper is to fill this void. We sincerely believe that the presented results are of vital importance for the future development of graded geometry, e.g. Frobenius theorem, Darboux theorem, Lie theory, graded extension of Hamiltonian mechanics, etc.

We have attempted to keep the main body of the text short and streamlined to have all important statements at hand. For this reason, we have moved all more complicated proofs into appendices. Only definitions and elementary results from ordinary differential geometry and ODE theory are used in proofs, making them accessible to readers with only a basic knowledge of graded geometry. 

The paper is organized as follows. 

We start by very brief introductory Section \ref{sec_notation} recalling basic definitions of graded geometry, conventions and notation used in this paper. 

In Section \ref{sec_flows}, we recall standard knowledge about flows of vector fields on ordinary smooth manifolds. We rephrase main statements in a way suitable for the purposes of graded geometry. 

In Section \ref{sec_gflows}, we define flows of vector fields on graded manifolds. We show that every flow has its unique infinitesimal generator. We argue that for degree zero case, every vector field has an underlying ordinary vector field. It turns out that if a vector field is an infinitesimal generator of some flow, its underlying vector field is the infinitesimal generator of the underlying ordinary flow. We then formulate the main statement of this paper, called the \textit{fundamental theorem on flows on graded manifolds}, which claims that every (homological in the odd degree case) vector field has a unique maximal flow. We conclude the section by explicitly calculating the maximal flow of the Euler vector field. 

Section \ref{sec_invariance} first defines what it means for a vector field to be invariant under some flow. We show that in a degree zero case, this can be restated as an actual invariance with respect to a family of local graded diffeomorphisms. We continue by proving that every vector field is invariant under its own flow. We conclude by showing that a given vector field is invariant under the flow of the other, if and only if their graded commutator vanishes.  

In Section \ref{sec_commuting}, we first discuss ordinary geometry and try to understand where two flows of ordinary vector fields can commute. This leads us to the introduction of commuting domains - special open sets where flows of two ordinary \textit{commuting} vector fields commute, and we investigate their basic properties. We recall the standard theorem about commuting flows in ordinary geometry. We define the notions of maximal commuting domains and commuting flows in graded setting. Finally, we prove the theorem saying that flows associated to two vector fields commute, if and only if the vector fields commute.

We conclude by Section \ref{sec_related}. Suppose one has two vector fields on two graded manifolds with a graded smooth map between them. If the two vector fields are related by the said graded smooth map, one can ask what is the corresponding property of their flows. It turns out that the answer is very natural: the map becomes equivariant with respect to the (locally defined) actions of the Abelian graded Lie group induced by the two flows. In fact, the converse statement is also true.

Let us briefly mention the structure of appendices. In Appendix \ref{app_proof0}, we prove the fundamental theorem on flows on graded manifolds for degree zero vector fields. This is by far the most involved part of this paper. The proof has several stages, hence it splits into six self-explanatory subsections. In Appendix \ref{app_proofneq0}, the proof of the theorem is given for non-zero degree vector fields. It turns out that it is essentially just a combinatorics for coefficients corresponding to powers of ``time evolution'' variable, hence it is much shorter. In Appendix \ref{app_invariance}, we prove the theorem claiming that a vector field is invariant with respect to the flow of another vector field, if and only if they commute. The complicated bit is again the one for degree zero flows. Finally, in Appendix \ref{app_commuting}, we the basic facts about commuting domains are proved, followed by the proof of the theorem claiming that flows of two vector fields commute, if and only if the vector fields commute. Not surprisingly, this is significantly more involved if both vector fields have degree zero.

\section*{Acknowledgments}
The research of was supported by grant GAČR 24-10031K. The authors are also grateful for a financial support from MŠMT under grant no. RVO 14000 from the Grant Agency of the Czech Technical University in Prague under grant no. SGS25/163/OHK4/3T/14. 

\section{Graded manifolds: conventions and notation} \label{sec_notation}
Before proceeding to the main body of the text, let us give the reader an overview of the conventions and notation used within. A good choice of notation should facilitate readability and comprehension, while allowing precise statements to be worded and proved. As these two requirements are, to some extend, mutually exclusive, every notation is a compromise. The field of graded geometry is, furthermore, notorious for varying conventions and definitions. To maintain consistency and avoid ambiguity, this text is build upon the definitions and theorems from \cite{Vysoky:2022gm}.

Thus a graded manifold is defined as a pair $\M = (M, \C^\infty_\M)$, where $M$ is a (Hausdorff, second countable) topological space, and $\C^\infty_\M$ is a sheaf of graded commutative associative unital algebras; it is also called the structure sheaf or the sheaf of graded functions. Consequently, for any open $U \subseteq M$ there is a set of graded functions $\C^\infty_\M(U)$ with the appropriate graded algebra structure. As open subsets are used in abundance throughout the text, we also use the notation $\Op(M)$ for the set of all open subsets of $M$, and $\Op_p(M)$ for the set of all open neighborhoods of some $p\in M$. Every graded function $f \in \C^\infty_\M(U)$ is assigned a degree $|f| \in \Z$.

Of course, not every pair $\M = (M, \C^\infty_\M)$ is a graded manifold. For it to be so, it needs to be locally isomorphic to a suitable model, called the graded domain $V^{(n_j)} = (V, \C^\infty_{(n_j)})$. Here $(n_j)$ is a sequence of non-negative integers, only finitely many of which are non-zero, $V \in \Op(\R^{n_0})$, and the model sheaf $\C^\infty_{(n_j)}$ assigns to every $W \in \Op(V)$ a set of formal power series in graded variables and with smooth functions on $W$ as coefficients. See Appendix \ref{subsection_setting_the_notation} for more. By only considering some $U \in \Op(M)$ and its open subsets, one obtains the open submanifold $\M|_U := (U, \C^\infty_\M|_{U})$.

Through the local isomorphisms of $\M$ with graded domains, the topological space $M$ gains the structure of a smooth manifold, which we call the underlying smooth manifold of $\M$. Morphisms of graded manifolds, or graded smooth maps, are pairs $\phi = (\ul{\phi}, \phi^\ast) \colon \M \to \cN$, where $\ul{\phi} \colon M \to N$ is a smooth map and $\phi^\ast \colon \C^\infty_\cN \to \ul{\phi}_\ast(\C^\infty_\M)$ is a morphism of sheaves of graded algebras. The sheaf morphism $\phi^\ast$ itself is a collection of graded algebra maps $\phi^\ast_U : \C^\infty_\cN(U) \to \C^\infty_\M(\ul{\phi}^{-1}(U))$ for every $U \in \Op(N)$, which behaves naturally under restrictions. Note that for a graded function $f \in \C^\infty_\cN(U)$ we often write $\phi^\ast(f)$ instead of the more correct $\phi_U^\ast(f)$. Using restrictions on the sheaf $\C^\infty_\M$ we can restrict a graded smooth map $\phi : \M \to \cN$ to $\phi|_{U} : \M|_U \to \cN|_V$ for any $U \in \Op(M)$ and $V \in \Op(N)$ such  that $\ul{\phi}(U) \subseteq V$, by setting $(\phi|_U)^\ast(f) := \phi^\ast(f)|_{U}$.

 One can view a smooth manifold $M$ as a (trivially) graded manifold. If $M$ is the underlying smooth manifold of $\M$, then there is a canonical graded smooth map $i_M : M \to \M$ with identity as the underlying map, and with the pullback given by $i^\ast_M(f) = \ul{f}$, which is called the body of the function $f$. In local coordinates, $\ul{f}$ is given simply by the coefficient of $f$ with no graded variable. Via the graded partition of unity, $i^\ast_M$ is shown to be surjective. In contrast, there is no canonical morphism $\M \to M$.

Vector fields on a graded manifold are most readily defined as derivations of the algebra of graded functions. Thus, a linear map $X : \C^\infty_\M(U) \to \C^\infty_\M(U)$ of degree $|X| \in \Z$ is a vector field on some $U \in \Op(M)$, denoted as $X \in \X_\M(U)$, if it satisfies the graded Leibniz rule $X(fg) = X(f)g + (-1)^{|X||f|}f X(g)$ for every $f,g \in \C^\infty_\M(U)$. The assignment $\X_\M : U \mapsto \X_\M(U)$ is a sheaf on $M$ of $\C^\infty_\M$-modules, which can again be shown using the graded partition of unity. In the case of the graded domain $\M = V^{(n_j)}$, we denote $\X_{(n_j)} := \X_{V^{(n_j)}}$.

\section{Flow domains and vector fields} \label{sec_flows}
We will first recall some basic notions of ordinary differential geometry. Let $M$ be a smooth manifold. We say that an open subset $D \subseteq M \times \R$ is a \textbf{flow domain on $M$}, if for every $m \in M$, the set
\begin{equation} \label{eq_Dmsubsetdomain}
D^{(m)} := \{ t \in \R \mid (m,t) \in D \} 
\end{equation}
is an open interval containing $0$. For any flow domain $D$ and $t \in \R$, the set 
\begin{equation} \label{eq_D(t)subset}
D_{(t)} := \{ m \in M \mid (m,t) \in D \}
\end{equation}
is an open (and possibly empty) subset of $M$. A smooth map $\theta_{0}: D \rightarrow M$ is called a \textbf{flow on $M$}, if $D$ is a flow domain and $\theta_{0}$ has the following two group-like properties:
\begin{enumerate}[(i)]
\item For any $m \in M$, one has $\theta_{0}(m,0) = m$;
\item For any $m \in M$, one has 
\begin{equation}
\theta_{0}( \theta_{0}(m,s),t) = \theta_{0}(m, s+t),
\end{equation}
for all $s,t \in \R$ such that both sides of the equation are well-defined. 
\end{enumerate}
Let us now make the following simple observation:
\begin{lemma} \label{lem_ordinaryflow}
Let $\theta_{0}: D \rightarrow M$ be a flow on $M$. Let us consider the following sets:
\begin{align}
D' := & \ \{ ((m,s),t) \in D \times \R \mid (\theta_{0}(m,s),t) \in D \}, \\ \label{eq_Dprimesubset}
D'' := & \ \{ (m,(s,t)) \in M \times (\R \times \R) \mid (m, s+t) \in D \}, \\
D^{\bullet} := & \  D' \cap \da_{0}^{-1}(D''), 
\end{align}
where $\da_{0}: (M \times \R) \times \R \rightarrow M \times (\R \times \R)$ is the canonical diffeomorphism. Then all three sets are open and the properties (i) and (ii) of the flow are equivalent to the commutativity of the diagrams
\begin{equation} \label{eq_theta0diagrams}
\begin{tikzcd} 
M \arrow{r}{(\1_{M},0_{M})} \arrow{rd}{\1_{M}} &[2em] D \arrow{d}{\theta_{0}} \\
& M 
\end{tikzcd}, \; \; 
\begin{tikzcd}
D^{\bullet} \arrow{d}{\da_{0}} \arrow{r}{\theta_{0} \times \1_{\R}} &[2em] D \arrow{dd}{\theta_{0}}\\
D'' \arrow{d}{\1_{M} \times +} & \\
D \arrow{r}{\theta_{0}} & M 
\end{tikzcd},
\end{equation}
where $0_{M}: M \rightarrow \R$ is the smooth map $0_{M}(m) := 0$ and $+: \R \times \R \rightarrow \R$ is the addition map $+(s,t) = s + t$. We do not explicitly write restrictions of maps. 
\end{lemma}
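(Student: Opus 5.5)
The statement has two parts: openness of $D'$, $D''$, $D^{\bullet}$, and the equivalence of (i), (ii) with the two diagrams in \eqref{eq_theta0diagrams}. Each part is a direct unwinding of definitions, using only continuity of $\theta_{0}$, of addition and of $\da_{0}$, together with openness of $D$.

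\textbf{Openness.} I will write each set as a preimage of an open set under a continuous map. For $D'$, consider the continuous map $\Phi: D \times \R \to M \times \R$, $\Phi((m,s),t) := (\theta_{0}(m,s),t)$. Then $D' = \Phi^{-1}(D)$, which is open in $D \times \R$. Since $D \times \R$ is open in $(M\times\R)\times\R$, $D'$ is open there as well. For $D''$, we have $D'' = (\1_{M} \times +)^{-1}(D)$, and $\1_{M}\times +$ is continuous, so $D''$ is open. Finally, $\da_{0}$ is a homeomorphism, so $\da_{0}^{-1}(D'')$ is open, and $D^{\bullet}$ is the intersection of two open sets.

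\textbf{Well-definedness of the diagram arrows.} For the first diagram, the map $(\1_{M},0_{M})$ lands in $D$ because $0 \in D^{(m)}$ for every $m$, by the definition of a flow domain. For the second diagram:
\begin{itemize}
\item On $D^{\bullet} \subseteq D'$, the map $\theta_{0}\times\1_{\R}$ sends $((m,s),t)$ to $(\theta_{0}(m,s),t)$, which lies in $D$ precisely by the defining condition of $D'$.
\item $\da_{0}$ maps $D^{\bullet}$ into $D''$ by construction.
\item $\1_{M}\times +$ maps $D''$ into $D$ by the definition of $D''$.
\end{itemize}
So all compositions make sense.

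\textbf{Equivalences.} Commutativity of the first diagram says exactly $\theta_{0}(m,0) = m$ for all $m\in M$, which is (i). For the second diagram, evaluating both paths at $((m,s),t)\in D^{\bullet}$ gives
\[
\theta_{0}(\theta_{0}(m,s),t) = \theta_{0}(m,s+t).
\]
It remains to check that $D^{\bullet}$ is exactly the set of triples $(m,s,t)$ for which both sides of (ii) are well-defined. The left side requires $(m,s)\in D$ and $(\theta_{0}(m,s),t)\in D$, i.e. $((m,s),t)\in D'$. The right side requires $(m,s+t)\in D$, i.e. $\da_{0}((m,s),t)\in D''$. Hence commutativity of the second diagram is literally (ii).

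The only point needing care is this matching of $D^{\bullet}$ with the natural domain of (ii), specifically that $D'$ already encodes $(m,s)\in D$. It is bookkeeping rather than a real obstacle.
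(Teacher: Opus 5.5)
Your proof is correct and complete: openness via preimages under the continuous maps $((m,s),t)\mapsto(\theta_{0}(m,s),t)$, $\1_{M}\times +$ and the homeomorphism $\da_{0}$, followed by identifying $D^{\bullet}$ with the set of $((m,s),t)$ where both sides of (ii) are defined. The paper gives no proof, calling the lemma a ``simple observation,'' and your argument is exactly the routine verification that this leaves to the reader.
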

In other words, the flow $\theta_{0}$ is just a smooth action of the Lie group $(\R,+)$, except that the flow domain $D$ is not necessarily the whole product $M \times \R$. 

Let $\partial_{t} \in \X_{\R}(\R)$ denote the coordinate vector field corresponding to the global coordinate $t$ on $\R$. We will write $1 \otimes \partial_{t}$ for its lift to the product $M \times \R$. We will use the same symbol also for its restriction to any flow domain $D$. For any flow $\theta_{0}$ on $M$, there is a unique vector field $X_{0}$ on $M$ which is $\theta_{0}$-related to $1 \otimes \partial_{t}$, that is 
\begin{equation} \label{eq_X0theta0related}
(1 \otimes \partial_{t}) \circ \theta^{\ast}_{0} = \theta^{\ast}_{0} \circ X_{0}.
\end{equation}
This vector field is called the \textbf{infinitesimal generator of the flow $\theta_{0}$}. It follows from (\ref{eq_X0theta0related}) and the first diagram (\ref{eq_theta0diagrams}) that $X_{0}$ can be written as 
\begin{equation}
X_{0} = (\1_{M},0_{M})^{\ast} \circ (1 \otimes \partial_{t}) \circ \theta_{0}^{\ast}. 
\end{equation}
In more conventional language, this means that $X_{0}|_{m} = \ddt \hspace{5mm} \theta_{0}(m,t)$. It is a well-known fact that to any vector field on $M$, there is a unique associated flow. For the sake of its generalization in this paper, let us recall the statement precisely.

\begin{theorem}[\textbf{Fundamental theorem on flows}] \label{thm_fundamentalordinary}
Let $X_{0}$ be a smooth vector field on a manifold $M$. Then there exists a unique flow $\theta_{0}: D \rightarrow M$ on $M$ having the following properties:
\begin{enumerate}[(i)]
\item $X_{0}$ is the infinitesimal generator of $\theta_{0}$.
\item $\theta_{0}$ is maximal, i.e. it cannot be extended to a larger flow domain containing $D$. 
\end{enumerate}
$\theta_{0}$ is called the \textbf{flow generated by $X_{0}$}. Moreover, for each $m \in M$, it satisfies the following:
\begin{enumerate}[({p}1)]
    \item The map $\theta^{(m)}_{0} := \theta_{0}(m,\cdot): D^{(m)} \rightarrow M$ is the maximal integral curve of $X_{0}$ starting at $m$.
    \item For every $s \in D^{(m)}$, one has $D^{(\theta_{0}(m,s))} = D^{(m)} - s := \{ t - s \mid t \in D^{(m)} \}$.
\end{enumerate}
\end{theorem}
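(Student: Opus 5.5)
The plan is to build the flow directly from integral curves, using only the standard local existence and uniqueness theorem for ODEs with smooth dependence on initial conditions.

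\textbf{Step 1: maximal integral curves.} Work in local charts. An integral curve $\gamma$ of $X_0$ is a solution of $\dot\gamma = X_0(\gamma)$. Uniqueness in a chart, together with Hausdorffness of $M$ and a connectedness argument, shows that two integral curves starting at $m$ agree on the intersection of their (interval) domains. The union of all integral curves starting at $m$ is therefore a well-defined maximal integral curve $\theta^{(m)}_0$, defined on an open interval $D^{(m)} \ni 0$.

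\textbf{Step 2: definition of the flow.} Set
\[
D := \{ (m,t) \mid t \in D^{(m)} \}, \qquad \theta_0(m,t) := \theta^{(m)}_0(t).
\]
This gives (p1) by construction. For (p2), fix $s \in D^{(m)}$. The curve $t \mapsto \theta^{(m)}_0(t+s)$ is an integral curve starting at $\theta_0(m,s)$, so maximality gives $D^{(m)} - s \subseteq D^{(\theta_0(m,s))}$. Applying the same argument from $\theta_0(m,s)$ with time $-s$ gives the reverse inclusion. The same translation identity, again via uniqueness, yields the group law (ii). Property (i) is immediate, since each curve starts at $m$.

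\textbf{Step 3: openness of $D$ and smoothness of $\theta_0$ (main obstacle).} Fix $(m_0,t_0) \in D$, say with $t_0 > 0$. Let $T$ be the supremum of those $t \in [0,t_0]$ for which there is a neighbourhood $U \times (-\varepsilon, t+\varepsilon)$ of $(m_0,0)$ contained in $D$ on which $\theta_0$ is smooth. The local ODE theorem shows that $T > 0$.

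Now suppose $T \le t_0$. Put $q = \theta_0(m_0,T)$ and choose a neighbourhood $W \times (-\delta,\delta)$ of $(q,0)$ on which the local flow is defined and smooth. Continuity of the curve $\theta^{(m_0)}_0$ gives some $t_1 < T$ with $t_1 > T - \delta$ and $\theta_0(m_0,t_1) \in W$. Since $\theta_0(\cdot,t_1)$ is smooth near $m_0$, there is a neighbourhood $U'$ of $m_0$ with $\theta_0(U',t_1) \subseteq W$. For $m \in U'$ we then have
\[
\theta_0(m,t) = \theta_0(\theta_0(m,t_1),\, t-t_1)
\]
for $t < t_1 + \delta$. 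This composite is smooth and defined beyond $T$ (beyond $t_0$ if $T = t_0$). That contradicts the choice of $T$, or when $T=t_0$ shows $(m_0,t_0)$ is an interior point of $D$ with $\theta_0$ smooth there. The case $t_0 < 0$ is symmetric.

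\textbf{Step 4: infinitesimal generator, maximality, uniqueness.} The relation (\ref{eq_X0theta0related}) is a pointwise restatement of the fact that each $\theta^{(m)}_0$ is an integral curve.

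Maximality and uniqueness follow together. Let $\psi\colon E \to M$ be any flow with generator $X_0$. By (\ref{eq_X0theta0related}), each curve $\psi(m,\cdot)$ is an integral curve starting at $m$, and $E^{(m)}$ is an interval containing $0$. Step 1 then gives $E^{(m)} \subseteq D^{(m)}$ and $\psi = \theta_0$ on $E$. Hence any flow generated by $X_0$ is a restriction of $\theta_0$. In particular a maximal one must coincide with $\theta_0$, and $\theta_0$ itself cannot be extended.
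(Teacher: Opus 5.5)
Your proposal is correct and is essentially the standard argument of Theorem 9.12 in \cite{lee2012introduction}, which the paper cites instead of giving its own proof: maximal integral curves come from local uniqueness plus a connectedness argument, the translation identity gives (p2) and the group law, a supremum argument along $\{m_0\}\times[0,t_0]$ gives openness of $D$ and smoothness of $\theta_0$, and uniqueness and maximality follow because any flow generated by $X_0$ restricts to integral curves. The only cosmetic slip is that ``suppose $T \le t_0$'' in Step 3 holds by definition; what you actually run is the case split $T < t_0$ (contradiction) versus $T = t_0$ (interior point), and your final sentence there already handles both cases.
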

For the proof, see e.g. Theorem 9.12 in \cite{lee2012introduction}. Observe that the property \textit{(p2)} implies that for the maximal flow domain $D$, one has $D^{\bullet} = D'$. See Lemma \ref{lem_ordinaryflow} for the notation.
\section{Flows on graded manifolds} \label{sec_gflows}
Let $\M$ be a graded manifold over an ordinary manifold $M$. We would like to generalize the notion of a flow. A viewpoint of Lemma \ref{lem_ordinaryflow} should give us a hint. First, by $\R[-k]$ we mean a degree shifted vector space $\R$ viewed as a graded manifold with a single variable $\bbt$ of degree $|\bbt| = -k$. It has a natural structure of a graded Lie group with operations
\begin{equation}
    +: \R[-k] \times \R[-k] \rightarrow \R[-k], \; \; 0: \{ \ast \} \rightarrow \R[-k], \; \; -: \R[-k] \rightarrow \R[-k].
\end{equation}
Explicitly, we have $+^{\ast}(\bbt) = \bbt_{1} + \bbt_{2}$, $0^{\ast}(\bbt) = 0$, $-^{\ast}(\bbt) = -\bbt$, where $(\bbt_{1},\bbt_{2})$ are global coordinate functions on the product graded manifold induced by $\bbt$. Note that the underlying manifold of $\R[-k]$ is $\R$ for $k = 0$ and a single point $\{ \ast \}$ for $k \neq 0$. 

\begin{definice}
    By a \textbf{flow domain $\D$ on $\M$ of degree $k$}, we mean a graded manifold
    \begin{equation}
        \D = \left\{ \begin{array}{cc} 
        (\M \times \R)|_{D}, \text{ where $D$ is a flow domain on $M$} & \text{ for } k = 0, \\
        \M \times \R[-k]  & \text{ for } k \neq 0. 
        \end{array}
        \right.
    \end{equation}
    In both cases, we will write $D$ for the underlying manifold of $\D$.
\end{definice}
\begin{definice}\label{def_graded_flow}
    Let $\D$ be a flow domain on $\M$ of degree $k$. A \textbf{flow on $\M$ of degree $k$} is a graded smooth map $\theta: \D \rightarrow \M$, such that the following diagrams commute:
    \begin{equation} \label{eq_flowdiagrams}
        \begin{tikzcd}
            \M \arrow{dr}{\1_{\M}} \arrow{r}{(\1_{\M},0_{\M})} &[2em] \D \arrow{d}{\theta} \\
            & \M
        \end{tikzcd}, \; \; 
        \begin{tikzcd}
            \D^{\bullet} \arrow{d}{\da} \arrow{r}{\theta \times \1_{\R[-k]}} &[2em] \D \arrow{dd}{\theta}\\
            \D'' \arrow{d}{\1_{\M} \times +}& \\
            \D \arrow{r}{\theta}& \M.
        \end{tikzcd}.
    \end{equation}
    Let us explain the notation:
    \begin{enumerate}[(i)]
        \item For $k = 0$, $\D^{\bullet}$ and $\D''$ are restrictions of the products $(\M \times \R) \times \R$ and $M \times (\R \times \R)$ to the open sets $D^{\bullet}$ and $D''$ defined as in Lemma \ref{lem_ordinaryflow} for $\theta_{0} := \ul{\theta}: D \rightarrow M$. 
        \item For $k \neq 0$, we let $\D^{\bullet} = (\M \times \R[-k]) \times \R[-k]$ and $\D'' = \M \times (\R[-k] \times \R[-k])$. 
        \item $0_{\M}: \M \rightarrow \R[-k]$ is the composition of the terminal arrow $\M \rightarrow \{ \ast \}$ with $0: \{ \ast \} \rightarrow \R[-k]$. We will often use the notation $\ds_{0} := (\1_{\M},0_{\M})$ and call it the \textbf{zero section} of $\D$. 
        \item $\da$ is the restriction of the canonical associator diffeomorphism. 
    \end{enumerate}
    We do not explicitly write restrictions of involved graded smooth maps. 
\end{definice}
\begin{rem}
    It is convenient to think of flows as actions of a graded Lie group $(\R[-k],+)$, except for $k = 0$, they are defined only on the open submanifold $\D$. 
\end{rem}
\begin{rem}
    For $k = 0$, the underlying smooth map $\ul{\theta}: D \rightarrow M$ is a flow on $M$. This is obtained by applying the body functor to the diagrams (\ref{eq_flowdiagrams}). For $k \neq 0$, the underlying map $\ul{\theta}: M \times \{ \ast \} \rightarrow M$ is just the trivial diffeomorphism $\ul{\theta}(m,\ast) = m$.
\end{rem}
We define the degree of a flow to match the degree of its infinitesimal generator. Indeed, let $\partial_{\bbt}$ be the coordinate vector field on $\R[-k]$ corresponding to $\bbt$. Its degree is $|\partial_{\bbt}| = -|\bbt| = k$. Let $1 \otimes \partial_{\bbt}$ denote its lift to $\M \times \R[-k]$. This is a degree $k$ vector field uniquely determined by 
\begin{equation}
    1 \otimes \partial_{\bbt} \sim_{p_{1}} 0, \; \; 1 \otimes \partial_{\bbt} \sim_{p_{2}} \partial_{\bbt},
\end{equation}
where $p_{1}: \M \times \R[-k] \rightarrow \M$ and $p_{2}: \M \times \R[-k] \rightarrow \R[-k]$ are the projections. We will use the same symbol for its restriction to $\D$. 
\begin{tvrz}
Let $\theta: \D \rightarrow \M$ be a flow on $\M$ of degree $k$. Then there is a unique vector field $X$ on $\M$ of degree $k$ satisfying
\begin{equation} \label{eq_partialxitheterelated}
    1 \otimes \partial_{\bbt} \sim_{\theta} X.
\end{equation}
$X$ is called the \textbf{infinitesimal generator} of $\theta$. It always satisfies $[X,X] = 0$. 
\end{tvrz}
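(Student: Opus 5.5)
Uniqueness comes first, and it follows from the first diagram in (\ref{eq_flowdiagrams}). If $X$ satisfies (\ref{eq_partialxitheterelated}), then $(1 \otimes \partial_{\bbt}) \circ \theta^\ast = \theta^\ast \circ X$. Composing with $\ds_0^\ast$ and using $\ds_0^\ast \circ \theta^\ast = (\theta \circ \ds_0)^\ast = \1$ gives
\begin{equation}
X = \ds_0^\ast \circ (1 \otimes \partial_{\bbt}) \circ \theta^\ast .
\end{equation}
This determines $X$. Conversely, I take this formula as the \emph{definition} of $X$. It is a composition of graded algebra morphisms and a degree $k$ derivation, so it is a derivation along $\ds_0 \circ \theta = \1_\M$, that is, a vector field on $\M$ of degree $k$. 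Sheaf-theoretically it is defined on each $U \in \Op(M)$ because $\ul{\ds_0}^{-1}(\ul{\theta}^{-1}(U)) = U$.

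The main step is existence: I must show $(1\otimes\partial_{\bbt}) \circ \theta^\ast = \theta^\ast \circ X$. For this I use the second diagram, which reads
\begin{equation}
\da^\ast (\1_\M \times +)^\ast \theta^\ast = (\theta\times \1)^\ast \theta^\ast \quad \text{on } \D^\bullet .
\end{equation}
Write $\bbt_1$ for the flow-domain time and $\bbt_2$ for the extra factor. I apply the lift $1\otimes\partial_{\bbt_2}$ of $\partial_{\bbt}$ on the last factor to both sides. On the right, $1\otimes\partial_{\bbt_2}$ is $(\theta\times\1)$-related to $1\otimes \partial_{\bbt}$, since it is a product map acting as identity on the last factor. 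On the left, $\partial_{\bbt_2}$ is $+$-related to $\partial_{\bbt}$ because $+^\ast\bbt = \bbt_1+\bbt_2$. Hence, via $\da$, $1\otimes\partial_{\bbt_2}$ is $\da^{-1}(\1\times +)$-related to $1\otimes\partial_{\bbt}$ on $\D$. Applying these relations gives
\begin{equation}
(\theta\times\1)^\ast\circ(1\otimes\partial_{\bbt})\circ\theta^\ast = \da^\ast(\1\times+)^\ast\circ(1\otimes\partial_{\bbt})\circ\theta^\ast .
\end{equation}
Next I pull back along the section $\sigma := (\1_\D, 0_\D) \colon \D \to \D^\bullet$, which sets $\bbt_2=0$. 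For $k=0$ it lands in $D^\bullet$ because $(\theta_0(m,s),0)\in D$. On the left, $(\theta\times\1)\circ\sigma = (\1_\M,0_\M)\circ\theta = \ds_0\circ\theta$, so the left side becomes $\theta^\ast\circ\ds_0^\ast\circ(1\otimes\partial_{\bbt})\circ\theta^\ast = \theta^\ast\circ X$. On the right, $(\1\times+)\circ\da\circ\sigma = \1_\D$, so the right side becomes $(1\otimes\partial_{\bbt})\circ\theta^\ast$. These two equalities give (\ref{eq_partialxitheterelated}). I expect the main obstacle to be justifying these relatedness and composition identities rigorously on the restricted domains for $k=0$. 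I would check them on coordinate functions, since vector fields along maps are determined by their values on generators.

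Finally, $[X,X]=0$. Relatedness is compatible with commutators, so $[1\otimes\partial_{\bbt},1\otimes\partial_{\bbt}] \sim_\theta [X,X]$. The left side vanishes because coordinate vector fields commute; in the odd case, $\partial_{\bbt}^2=0$ since $\bbt$ is odd. Then $\theta^\ast\circ[X,X]=0$, and composing with $\ds_0^\ast$ gives $[X,X]=0$. For even $k$ this identity is trivial anyway.
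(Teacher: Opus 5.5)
Your proof is correct and follows the paper's route. The first diagram forces $X = \ds_0^\ast \circ (1 \otimes \partial_{\bbt}) \circ \theta^\ast$. The relation $1 \otimes \partial_{\bbt} \sim_\theta X$ comes from differentiating the second diagram along the last $\R[-k]$ factor, using left-invariance of $\partial_{\bbt}$, and then restricting to that factor's zero section; the paper only sketches this step, and you carry it out in full. Finally, $[X,X]=0$ follows by pulling $\theta^\ast \circ [X,X] = 0$ back along $\ds_0$. There is one cosmetic slip: the identity behind the derivation property is $\theta \circ \ds_0 = \1_{\M}$, not $\ds_0 \circ \theta = \1_{\M}$.
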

\begin{proof}
    The requirement (\ref{eq_partialxitheterelated}) explicitly means that 
    \begin{equation} \label{eq_partialxitheterelated2} 
        (1 \otimes \partial_{\bbt}) \circ \theta^{\ast} = \theta^{\ast} \circ X,
    \end{equation}
    where $\theta^{\ast}: \C^{\infty}_{\M}(M) \rightarrow \C^{\infty}_{\D}(D)$ is the pullback induced by $\theta$. Let $\ds_{0} := (\1_{\M}, 0_{\M}): \M \rightarrow \M \times \R[-k]$. It follows from the first diagram in (\ref{eq_flowdiagrams}) and (\ref{eq_partialxitheterelated2}) that the only way to define $X$ is
    \begin{equation}
        X := \ds_{0}^{\ast} \circ (1 \otimes \partial_{\bbt}) \circ \theta^{\ast}. 
    \end{equation}
    We invite the reader to check that this defines a vector field on $\M$ of degree $k$. To check that such $X$ indeed fits into (\ref{eq_partialxitheterelated2}), one has to utilize the other diagram in (\ref{eq_flowdiagrams}) together with the fact that $\partial_{\bbt}$ is a left-invariant vector field on $(\R[-k],+)$. The proof is analogous to the construction of infinitesimal generators for general graded Lie group actions, see Proposition 3.22 in \cite{Smolka2023}.

    Finally, it follows from (\ref{eq_partialxitheterelated}) that $0 \sim_{\theta} [X,X]$, that is $\theta^{\ast} \circ [X,X] = 0$. Acting on both sides of this equation by $\ds_{0}^{\ast}$ and using the first diagram in (\ref{eq_flowdiagrams}), one finds $[X,X] = 0$. 
\end{proof}
\begin{rem}\label{rem_homological}
    We see that whenever $X$ is an infinitesimal generator of a flow on $\M$, it satisfies $[X,X] = 0$. If its degree $k$ is odd, this is a non-trivial statement. We see that for flows of odd degrees, their infinitesimal generators are \textit{homological vector fields}.
\end{rem}
We have already observed that for $k = 0$, the underlying map $\ul{\theta}: D \rightarrow M$ is a flow on $M$. How is its infinitesimal generator related to the one of $\theta$?
\begin{lemma} \label{lem_underlying}
    Let $X$ be a vector field on $\M$ of degree zero. Then there is a unique vector field $X_{0}$ on $M$ satisfying the equation
    \begin{equation} \label{eq_underlying}
        \ul{X(f)} = X_{0}(\ul{f}),
    \end{equation}
    for all $f \in \C^{\infty}_{\M}(M)$. In other words, one has $X_{0} \sim_{i_{M}} X$, where $i_{M}: M \rightarrow \M$ is the canonical embedding defined by $i^{\ast}_{M}(f) = \ul{f}$. $X_{0}$ is called the \textbf{underlying vector field} of $X$. 
\end{lemma}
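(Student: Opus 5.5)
Uniqueness is immediate from the surjectivity of $i^{\ast}_{M}\colon \C^{\infty}_{\M}(M) \to C^{\infty}(M)$, $f \mapsto \ul{f}$, recalled in Section \ref{sec_notation}. If $X_{0}$ and $X'_{0}$ both satisfy (\ref{eq_underlying}), then $X_{0}(\ul{f}) = X'_{0}(\ul{f})$ for every $f$. Since every $g \in C^{\infty}(M)$ has the form $\ul{f}$, we get $X_{0} = X'_{0}$.

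For existence, I would define $X_{0}(g) := \ul{X(f)}$ for any $f \in \C^{\infty}_{\M}(M)$ with $\ul{f} = g$. The key point is well-definedness: $\ul{X(f)}$ must depend only on $\ul{f}$. Equivalently, $X$ must map the ideal $\mathcal{J}(M) = \ker(i^{\ast}_{M})$ of functions with vanishing body into itself. I would check this locally, which is legitimate because the body is compatible with restrictions and $X$ is a sheaf morphism. In a graded domain with coordinates $(x^{i}, \xi^{\mu})$, where $|x^{i}| = 0$ and $|\xi^{\mu}| \neq 0$, a degree zero vector field has the form $X = X^{i}\partial_{x^{i}} + X^{\mu}\partial_{\xi^{\mu}}$ with $|X^{i}| = 0$ and $|X^{\mu}| = |\xi^{\mu}|$. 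A function with zero body is a (formal) sum of terms each containing at least one $\xi$. Applying $\partial_{x^{i}}$ keeps at least one $\xi$ in every term. Applying $X^{\mu}\partial_{\xi^{\mu}}$ may remove a $\xi$, but the coefficient $X^{\mu}$ has nonzero degree, so its body vanishes. It therefore lies in the ideal generated by the $\xi$'s.

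The main obstacle is doing this cleanly with formal power series rather than polynomials. Two facts are needed: the ideal of body-zero functions is exactly the closure of the ideal generated by the $\xi^{\mu}$, and vector fields act termwise (they are determined by their values on coordinates and commute with the formal series). I would take both from the local description of $\C^{\infty}_{(n_j)}$ and $\X_{(n_j)}$ in \cite{Vysoky:2022gm}. A cleaner way to organize this avoids coordinates. Every body-zero function is locally a finite sum $\sum_{\mu} \xi^{\mu} h_{\mu}$, up to terms of arbitrarily high order, which do not affect the body. The Leibniz rule then gives
\[
\ul{X(\xi^{\mu} h_{\mu})} = \ul{X(\xi^{\mu})}\,\ul{h_{\mu}} + \ul{\xi^{\mu}}\,\ul{X(h_{\mu})} = 0,
\]
because $X(\xi^{\mu})$ and $\xi^{\mu}$ both have nonzero degree, and nonzero-degree functions have zero body.

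Once well-definedness is settled, $X_{0}$ is $\R$-linear. It satisfies the Leibniz rule because the body is an algebra morphism: $X_{0}(\ul{f}\,\ul{g}) = \ul{X(fg)} = \ul{X(f)}\,\ul{g} + \ul{f}\,\ul{X(g)}$. Hence $X_{0}$ is a derivation of $C^{\infty}(M)$, i.e. a smooth vector field. The identity $\ul{X(f)} = X_{0}(\ul{f})$ is exactly the relation $X_{0} \circ i^{\ast}_{M} = i^{\ast}_{M} \circ X$, that is $X_{0} \sim_{i_{M}} X$.
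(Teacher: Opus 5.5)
Your proof is correct, and its skeleton matches the paper's. You define $X_{0}$ through the surjective body map, check locally that $\ul{f}=0$ forces $\ul{X(f)}=0$, and use that the coefficients of $\partial_{\xi^{\mu}}$ have nonzero degree and hence zero body.

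The formal-power-series obstacle you anticipate does not arise in the paper's version, which never mentions ideals. Since $f\mapsto\ul{f}$ is an algebra morphism, the paper simply computes $\ul{X(f)}=\ul{X}^{\lambda}\,\ul{\partial_{\lambda}f}$, a finite sum, and shows each term vanishes.

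Your Leibniz variant is a legitimate alternative that avoids the coordinate expansion $X=X^{\lambda}\partial_{\lambda}$ altogether. However, drop the clause ``up to terms of arbitrarily high order''. As written it is circular: what must vanish is the body of $X$ applied to the remainder, not the body of the remainder itself. It is also unnecessary. Group each monomial $\xi^{\fp}$ with $\fp\neq\fnula$ according to its first nonzero exponent. This gives an exact identity $f=\sum_{\mu}\xi^{\mu}h_{\mu}$, with each $h_{\mu}$ a genuine formal power series of degree $|f|-|\xi^{\mu}|$. So the body-zero functions form exactly the ideal generated by the $\xi^{\mu}$, and no closure is needed.

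Finally, you get smoothness of $X_{0}$ by identifying derivations of $C^{\infty}(M)$ with vector fields, whereas the paper reads off $X_{0}=\ul{X}^{\lambda}\partial_{\lambda}$ in coordinates. Either works.
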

\begin{proof}
    The body map $f \mapsto \ul{f}$ is surjective, so one can define $X_{0}$ using the formula (\ref{eq_underlying}). One only has to check that $\ul{f} = 0$ implies $\ul{X(f)} = 0$. This can be verified locally, so we may assume that $\M$ is a graded domain with coordinates $\{ \bbz^{\lambda} \}_{\lambda=1}^{n}$. One can write $X(f) = X^{\lambda} \cdot \partial_{\lambda}f$ and thus
    \begin{equation} \label{eq_ulXfunderlying}
        \ul{X(f)} = \ul{X}^{\lambda} \cdot \ul{\partial_{\lambda} f}.
    \end{equation}    
    Since $|X^{\lambda}| = |\bbz^{\lambda}|$, we have $\ul{X}^{\lambda} = 0$ whenever $|\bbz^{\lambda}| \neq 0$. Moreover, recall that $f$ is a formal power series in \textit{non-zero} degree coordinates with coefficients being smooth functions of \textit{zero-degree} coordinates. If $|\bbz^{\lambda}| = 0$, the operator $\partial_{\lambda}$ acts by partial differentiation on the coefficients of $f$. In particular, if $f$ has a trivial body, then so does $\partial_{\lambda} f$. This shows that $\ul{\partial_{\lambda} f} = 0$ whenever $|\bbz^{\lambda}| = 0$. Consequently, all terms of the sum (\ref{eq_ulXfunderlying}) vanish. This shows that $X_{0}$ is well-defined. It is easy to check that it is a smooth vector field on $M$. In fact, in coordinates it takes the form 
    \begin{equation}
        X_{0} = \ul{X}^{\lambda} \partial_{\lambda}.
    \end{equation}
    Note that the contribution from the summands where $|\bbz^{\lambda}| \neq 0$ is trivial. 
\end{proof}
\begin{rem}
    For $k = 0$, we will often denote the coordinate $\bbt$ on $\R[-0] \cong \R$ as $t$, for $k \neq 0$, we will usually denote $\bbt$ as $\tau$. 
\end{rem}
\begin{tvrz} \label{tvrz_underlyingflow}
    Let $\theta$ be a flow on $\M$ of degree $0$ with the infinitesimal generator $X$. Then the infinitesimal generator of the flow $\ul{\theta}$ on $M$ is precisely the underlying vector field $X_{0}$.
\end{tvrz}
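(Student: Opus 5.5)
We have to show that $X_{0}$ is $\ul{\theta}$-related to $1 \otimes \partial_{t}$ on $D$, i.e.\ that $(1 \otimes \partial_{t})(\ul{\theta}^{\ast} g) = \ul{\theta}^{\ast}(X_{0} g)$ for every $g \in C^{\infty}(M)$. Uniqueness of the infinitesimal generator of an ordinary flow then identifies $X_{0}$ with the generator of $\ul{\theta}$. The idea is to apply the body map to the defining relation (\ref{eq_partialxitheterelated2}) and use the compatibility of bodies with pullbacks and with the vector field $1 \otimes \partial_{t}$.

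The key steps, in order, are as follows.

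\emph{Step 1.} Since the body map $\C^{\infty}_{\M}(M) \to C^{\infty}(M)$ is surjective, pick $f$ with $\ul{f} = g$.

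\emph{Step 2.} Record the naturality of bodies under graded smooth maps: for any $\phi: \M \to \cN$ one has $\ul{\phi^{\ast}(f)} = \ul{\phi}^{\ast}(\ul{f})$. This follows from $\phi \circ i_{M} = i_{N} \circ \ul{\phi}$. Both sides of this identity are graded smooth maps $M \to \cN$ with underlying map $\ul{\phi}$. A map from an ordinary manifold is determined by its underlying map together with pullbacks of degree-zero coordinates, since pullbacks of nonzero-degree functions to an ordinary manifold vanish. Alternatively, one checks it directly in local coordinates, where $\phi^{\ast}$ of a degree-zero coordinate has body equal to the corresponding component of $\ul{\phi}$.

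\emph{Step 3.} Observe that $1 \otimes \partial_{t}$ on $\D = (\M \times \R)|_{D}$ is a degree zero vector field whose underlying vector field, in the sense of Lemma \ref{lem_underlying}, is the ordinary $1 \otimes \partial_{t}$ on $D$. This is immediate from the coordinate formula $X_{0} = \ul{X}^{\lambda}\partial_{\lambda}$, since the only nonzero component of $1 \otimes \partial_{t}$ is the constant $1$ in the $t$-direction.

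\emph{Step 4.} Take bodies of both sides of (\ref{eq_partialxitheterelated2}) applied to $f$. Using Lemma \ref{lem_underlying} for both vector fields together with Step 2, the left-hand side becomes
\[
\ul{(1\otimes\partial_{t})(\theta^{\ast}f)} = (1\otimes\partial_{t})(\ul{\theta}^{\ast}g),
\]
and the right-hand side becomes
\[
\ul{\theta^{\ast}(X f)} = \ul{\theta}^{\ast}(\ul{Xf}) = \ul{\theta}^{\ast}(X_{0} g).
\]
This is exactly the required relatedness. Alternatively, one can argue at the level of morphisms: $X_{0} \sim_{i_{M}} X$, the analogous relation holds for $1\otimes\partial_{t}$ along $i_{D}$, and $\theta \circ i_{D} = i_{M} \circ \ul{\theta}$.

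The main obstacle is Step 2, the naturality $\ul{\phi^{\ast} f} = \ul{\phi}^{\ast}\ul{f}$. Everything else is formal. I would verify it locally by expanding $f$ as a formal power series in nonzero-degree coordinates. The pullback $\phi^{\ast}$ sends nonzero-degree coordinates to functions with zero body, and it acts on the smooth coefficients by composition with $\ul{\phi}$ modulo terms with trivial body. This description of morphisms is standard in \cite{Vysoky:2022gm}, and I would cite it from there.
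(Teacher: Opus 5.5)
Your proposal is correct and follows essentially the paper's argument: apply the body map, use its compatibility with pullbacks and with $1\otimes\partial_{t}$ (your Steps 2--3, which the paper uses without comment in its third equality), and finish with surjectivity of $f\mapsto\ul{f}$. The only cosmetic difference is that the paper takes bodies of the explicit formula $X=\ds_{0}^{\ast}\circ(1\otimes\partial_{t})\circ\theta^{\ast}$ and recognizes the ordinary generator formula via $\ul{\ds_{0}}=(1_{M},0_{M})$, whereas you take bodies of the relatedness identity and then invoke uniqueness of the ordinary generator.
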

\begin{proof}
    For any $f \in \C^{\infty}_{\M}(M)$, one has 
    \begin{equation}
        X_{0}(\ul{f}) = \ul{X(f)} = \ul{ \ds_{0}^{\ast}((1_{\M} \otimes \partial_{t})\theta^{\ast}(f))} = \ul{\ds_{0}}^{\ast}((1_{M} \otimes \partial_{t}) \ul{\theta}^{\ast}(\ul{f})),
    \end{equation}
    where we have used $\M$ and $M$ subscripts to distinguish between the vector fields $1_{\M} \otimes \partial_{t}$ on $\M \times \R$ and $1_{M} \otimes \partial_{t}$ on $M \times \R$, respectively. But $\ul{s_{0}}: M \rightarrow M \times \R$ is the zero section $(1_{M},0_{M})$ and the right-hand side is thus the infinitesimal generator of $\ul{\theta}$ applied on $\ul{f}$. Since the body map $f \mapsto \ul{f}$ is surjective, this proves the claim.
\end{proof}
We can now formulate the main statement of this paper.

\begin{theorem}[\textbf{Fundamental theorem on flows on graded manifolds}]\label{thm_funamental_extraordinary}
    Let $X$ be a vector field on $\M$ of degree $k$ satisfying $[X,X] = 0$. 

    Then there exists a unique flow $\theta: \D \rightarrow \M$ of degree $k$, having the following properties:
    \begin{enumerate}[(i)]
        \item $X$ is the infinitesimal generator of $\theta$.
        \item Suppose $\theta': \D' \rightarrow \M$ is any other flow satisfying $(i)$. Then $D' \subseteq D$ and $\theta' = \theta|_{D}$. We say that the flow domain $\D$ is \textbf{maximal}. 
    \end{enumerate}
     $\theta$ is called the \textbf{flow generated by $X$}. Moreover, for $k = 0$, the underlying map $\ul{\theta}: D \rightarrow M$ is the unique maximal flow $\theta_{0}$ of the underlying vector field $X_{0}$.
\end{theorem}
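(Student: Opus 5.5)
The proof splits into the case $k \neq 0$ and the harder case $k = 0$. In both cases a flow $\theta$ is determined by its pullback $\theta^{\ast}$, and the relation $1 \otimes \partial_{\bbt} \sim_{\theta} X$ together with $\ds_{0}^{\ast} \circ \theta^{\ast} = \1$ (first diagram in (\ref{eq_flowdiagrams})) should be read as an ``initial value problem'' for $\theta^{\ast}$:
\begin{equation*}
\partial_{\bbt}\theta^{\ast}(f) = \theta^{\ast}(X(f)), \qquad \theta^{\ast}(f)|_{\bbt = 0} = f .
\end{equation*}

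\textbf{The case $k \neq 0$.} Here $\D = \M \times \R[-k]$, and every function on $\D$ is a formal power series in $\bbt = \tau$ with coefficients in $\C^{\infty}_{\M}$. For $k$ even we set
\begin{equation*}
\theta^{\ast}(f) := \sum_{n \geq 0} \frac{\tau^{n}}{n!} X^{n}(f).
\end{equation*}
For $k$ odd we have $\tau^{2} = 0$, and we set $\theta^{\ast}(f) := f + \tau X(f)$, possibly with sign conventions depending on the side on which $\tau$ is placed. Uniqueness follows by comparing coefficients of $\tau^{n}$ in the ODE above: the ODE forces the coefficient of $\tau^{n}$ to be $X^{n}(f)/n!$ by induction.

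The remaining checks are the following.
\begin{enumerate}[(a)]
\item $\theta^{\ast}$ is a morphism of sheaves of graded algebras. This follows from the graded Leibniz rule via a binomial identity for $X^{n}(fg)$. For $k$ odd, the cross term $\tau^{2}X(f)X(g)$ vanishes automatically, and $[X,X] = 0$ is needed elsewhere.
\item The group law holds: $(\theta \times \1)^{\ast}\theta^{\ast}(f) = (\1 \times +)^{\ast}\theta^{\ast}(f)$. With $\tau_{1}, \tau_{2}$ this reduces to the binomial expansion of $(\tau_{1}+\tau_{2})^{n}$. In the odd case the term $\tau_{1}\tau_{2}X^{2}(f)$ must vanish, and it does exactly because $X^{2} = \frac{1}{2}[X,X] = 0$.
\end{enumerate}
Maximality is trivial, since $\D$ is already the whole product.

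\textbf{The case $k = 0$.} I would first invoke Proposition \ref{tvrz_underlyingflow}: for any flow $\theta'$ generated by $X$, the map $\ul{\theta'}$ is a flow generated by $X_{0}$. By Theorem \ref{thm_fundamentalordinary}, its domain is contained in the maximal domain $D$ of $\theta_{0}$. So the natural candidate is $\ul{\theta} = \theta_{0}$ on $\D = (\M \times \R)|_{D}$, and it remains to construct $\theta^{\ast}$ and prove that it is unique.

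\emph{Local construction.} Work in a graded chart with coordinates $(x^{i}, \xi^{\mu})$, with the $x^{i}$ of degree zero and the $\xi^{\mu}$ of non-zero degree. A graded map into the chart is determined by the pullbacks of the coordinates, and the flow equation becomes the system
\begin{equation*}
\partial_{t}\theta^{\ast}(\bbz^{\lambda}) = \theta^{\ast}(X^{\lambda}), \qquad \theta^{\ast}(\bbz^{\lambda})|_{t=0} = \bbz^{\lambda},
\end{equation*}
for formal power series in $\xi$ with coefficients depending on $(x,t)$.

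I would expand the unknowns by polynomial order in the $\xi$'s and solve order by order.
\begin{enumerate}[(1)]
\item At order zero, the body equation is exactly the ODE for $\theta_{0}$.
\item At each higher order, the equation is a \emph{linear} inhomogeneous ODE along the integral curves of $X_{0}$. Its coefficients are built from derivatives of $X^{\lambda}$ evaluated along $\theta_{0}$, and its inhomogeneity is built from lower orders.
\item Linear ODEs have solutions on the whole interval $D^{(m)}$, with smooth dependence on the initial data. Hence each order exists on all of $D \cap (\text{chart} \times \R)$.
\end{enumerate}
Uniqueness at each order follows from uniqueness for ODEs.

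\emph{Globalization.} Trajectories leave charts, so one has to glue. Local solutions agree on overlaps by uniqueness. To pass beyond one chart, use the group law: define $\theta^{\ast}$ near $(m,t)$ by composing short-time local flows, and use uniqueness to show that the composite is independent of the subdivision.

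\emph{Group law and generator.} The group-law diagram follows from uniqueness. Both $(\theta \times \1)^{\ast}\theta^{\ast}$ and $(\1 \times +)^{\ast}\theta^{\ast}$ solve the same initial value problem in the last variable, with the same initial condition $\theta^{\ast}$ at time $0$. This argument uses property (p2) of Theorem \ref{thm_fundamentalordinary}, which gives $D^{\bullet} = D'$ and guarantees that the time-slices are intervals. The generator property holds by construction.

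\emph{Maximality.} Let $\theta'$ be any other flow generated by $X$. As noted above, $\ul{\theta'}$ is a flow generated by $X_{0}$, so $D' \subseteq D$. The pullbacks $\theta'^{\ast}$ and $\theta^{\ast}|_{D'}$ solve the same initial value problem on each connected time-slice, so they coincide.

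\textbf{Expected main obstacle.} The hardest step is the $k = 0$ globalization: making the order-by-order solution a well-defined sheaf morphism on all of $\D$, including convergence and compatibility of the formal power series across chart changes, and getting existence on the full maximal interval rather than only for short time. I would try to handle this by proving a uniqueness lemma for graded maps $\phi$ on connected-in-time open sets of $\D$ satisfying $(1 \otimes \partial_{t}) \circ \phi^{\ast} = \phi^{\ast} \circ X$. Such a lemma would then be used repeatedly for gluing, for the group law and for maximality.
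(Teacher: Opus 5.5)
Your proposal is correct and follows the paper's overall strategy. For $k \neq 0$ you use the explicit formula $\theta^{\ast} = \exp(\tau X)$, whose coefficients are forced by $\ds_{0}^{\ast} \circ (1 \otimes \partial_{\tau})^{\ell} \circ \theta^{\ast} = X^{\ell}$. For $k = 0$ you use the same three ingredients as the paper: the weight-by-weight reduction to \emph{linear} ODEs along $\theta_{0}$, which is what gives existence on whole time intervals; uniqueness propagated along compact time segments; and maximality read off from the underlying flow.

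Two steps are organised differently.

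\textbf{Globalization.} You compose short-time flows. The paper instead proves the local statement (Proposition \ref{tvrz_pivotal}, Lemma \ref{lem_thetalocal}) for an \emph{arbitrary} initial map $\phi$ at an arbitrary time $t_{0}$, which the linear recursion gives at no extra cost. It then simply continues the solution along a chain of rectangles $V \times I_{k}$ covering $\{m_{0}\} \times [0,s_{0}]$, and concludes $D_{\max} = D_{0}$ by contradiction (Proposition \ref{tvrz_DmaxisD0}). No group law is needed until the very end. Your route also works, but it costs two extra ingredients: a uniform time step on a neighbourhood of the compact orbit segment, and the local group law of the short-time flow, to make the composite independent of the subdivision. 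Note that $\theta^{\mathrm{sh}} \circ (\phi \times L^{+}_{-t_{0}})$ is exactly the solution with initial map $\phi$ at time $t_{0}$. So composing is just another way to manufacture the paper's general-initial-data solutions.

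\textbf{Group law.} You apply uniqueness directly on $((\M \times \R) \times \R)|_{D'}$, regarded as a flow domain over the graded manifold $\D$ with initial map $\theta$. This is legitimate, since Lemma \ref{lem_thetathetaprimesameequationsoffsetgeneral} allows an arbitrary source. It is also shorter than the paper's route, which slices at fixed $s$ via $\theta_{[s]}$, $\theta'_{[s]}$ and then reassembles with Lemma \ref{lem_twomus}.

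Three small corrections:
\begin{enumerate}
\item The coordinate system only makes sense while $\theta_{0}$ stays inside the target chart. So the local statement should be on rectangles $V \times I$ with $\theta_{0}(V \times I) \subseteq U$, not on all of $D \cap (\text{chart} \times \R)$.
\item There is no convergence issue anywhere. Each coefficient of a pullback is a finite expression in finitely many coefficients, cf.\ (\ref{eq_thetastfpthcomponent}).
\item For odd $k$ you must also check that your $\theta$ satisfies the generator equation itself. This uses $X^{2} = 0$ a second time, since $\theta^{\ast}(X(f)) = X(f) + \tau X^{2}(f)$ while $(1 \otimes \partial_{\tau})\theta^{\ast}(f) = X(f)$.
\end{enumerate}
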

We have decided to move the proof of this theorem to Appendix \ref{app_proof0} (degree zero case) and to Appendix \ref{app_proofneq0} (nonzero degree case). Note that for $k \neq 0$, any flow of degree $k$ satisfying $(i)$ has the property $(ii)$. This is because the body manifold of the flow domain of degree $k \neq 0$ is always just $M \times \{\ast\}$. Let us finish this section with one interesting example.
\begin{example}
    Let $E \in \X_{\M}(M)$ be the Euler vector field on $\M$. This is a degree zero vector field defined by the formula
    \begin{equation}
        E(f) = |f| f,
    \end{equation}
    for all $f \in \C^{\infty}_{\M}(M)$. Its underlying vector field $E_{0}$ is trivial. Let us try and find the flow $\theta$ generated by $E$. Its underlying map $\ul{\theta}$ must be the flow generated by $E_{0} = 0$. We thus know that $D = M \times \R$ and $\ul{\theta} = p_{1}: M \times \R \rightarrow M$ is the canonical projection. The defining equation (\ref{eq_partialxitheterelated2}) reads
    \begin{equation} \label{eq_PDEforEuler}
        (1 \otimes \partial_{t}) \theta^{\ast}(f) = |f|\,  \theta^{\ast}(f),
    \end{equation}
    for all $f \in \C^{\infty}_{\M}(M)$. The first diagram in (\ref{eq_flowdiagrams}) can be interpreted as the initial condition for this differential equation in the form $\ds_{0}^{\ast}( \theta^{\ast}(f)) = f$ for all $f \in \C^{\infty}_{\M}(M)$. It is easy to see that (\ref{eq_PDEforEuler}) with this initial condition can be solved by
    \begin{equation}
        \theta^{\ast}(f) = \exp(t |f|) \cdot p_{1}^{\ast}(f),
    \end{equation}
    where $p_{1}: \M \times \R \rightarrow \M$ is the projection. We invite to reader to check that $\theta$ has the required properties of the flow. Observe that even in the case of the vanishing underlying vector field, flow equations can give non-trivial (systems of) ordinary differential equations.
\end{example}

    \section{Invariance under flows} \label{sec_invariance}
    Let us now discuss the following. Let $X \in \X_{\M}(M)$ be a vector field on $\M$ of degree $k$, and let $\theta: \D \rightarrow \M$ be the flow generated by $X$. Let $Y \in \X_{\M}(M)$ be another vector field. We say that $Y$ is \textbf{invariant under the flow $\theta$}, if 
    \begin{equation} \label{eq_invariantVF}
        Y \otimes 1 \sim_{\theta} Y,
    \end{equation}
    where $Y \otimes 1$ is the canonical lift of $Y$ to $\D$. 
    
    Let us now discuss this condition for $k = 0$. It turns out that it can be interpreted in a more familiar fashion. For any given $t \in \R$, let $D_{(t)}$ be the (possibly empty) open subset (\ref{eq_D(t)subset}). Let $\ds_{t}: \M|_{D_{(t)}} \rightarrow (\M \times \R)|_{D}$ be the \textbf{section valued at $t$}, that is 
    \begin{equation}
        p_{1} \circ \ds_{t} = \iota_{(t)}, \; \; p_{2} \circ \ds_{t} = j_{(t)},
    \end{equation}
    where $\iota_{(t)}: \M|_{D_{(t)}} \rightarrow \M$ is the canonical inclusion, and $j_{(t)}: \M \rightarrow \R$ is the terminal arrow $\M \rightarrow \{t\}$ followed by the one-point embedding $\{t \} \rightarrow \R$. Since $\theta_{0}(D_{(t)}) \subseteq D_{(-t)}$, there is a unique graded smooth map $\theta_{(t)}: \M|_{D_{(t)}} \rightarrow \M|_{D_{(-t)}}$ fitting into the diagram
    \begin{equation} \label{eq_thetatmap}
        \begin{tikzcd}
            \M|_{D_{(t)}} \arrow{d}{\ds_{t}} \arrow[dashed]{r}{\theta_{(t)}} & \M|_{D_{(-t)}} \arrow{d}{\iota_{(-t)}} \\
            (\M \times \R)|_{D} \arrow{r}{\theta} & \M 
        \end{tikzcd}
    \end{equation}
    \begin{lemma} \label{lem_thetatisdiffeo}
        For each $t \in \R$ with $D_{(t)} \neq \emptyset$, $\theta_{(t)}$ is a graded diffeomorphism with the inverse $\theta_{(-t)}$. 
    \end{lemma}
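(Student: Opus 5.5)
We will show $\theta_{(-t)} \circ \theta_{(t)} = \1_{\M|_{D_{(t)}}}$. Since $D_{(t)}\neq\emptyset$ implies $D_{(-t)}\neq\emptyset$ (as $\theta_0(D_{(t)})\subseteq D_{(-t)}$), the same argument with $t$ replaced by $-t$ gives the other composition. First we handle the underlying level. By property (p2) of Theorem \ref{thm_fundamentalordinary}, for $m \in D_{(t)}$ we have $-t \in D^{(\theta_0(m,t))}$, so $\theta_{(-t)}\circ\theta_{(t)}$ is well defined on all of $D_{(t)}$. The group law of the ordinary flow then gives $\theta_0(\theta_0(m,t),-t)=\theta_0(m,0)=m$. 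Here we use that $\ul{\theta}=\theta_0$ is the maximal flow of $X_0$ (Theorem \ref{thm_funamental_extraordinary}), which is needed for (p2).

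For the graded statement, we compose the second diagram of (\ref{eq_flowdiagrams}) with a suitable "constant section" into $\D^{\bullet}$. Define $\sigma: \M|_{D_{(t)}} \to (\M\times\R)\times\R$ by $\sigma = ((\ds_t, j_{(-t)}))$, i.e.\ the map whose components are $\iota_{(t)}$, $j_{(t)}$ and $j_{(-t)}$. Its underlying map is $m\mapsto((m,t),-t)$, which by the first step lands in $D' = D^{\bullet}$. So $\sigma$ corestricts to $\D^{\bullet}$.

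We then compute both paths of the square precomposed with $\sigma$.
\begin{itemize}
\item Along the top path, $(\theta\times\1_\R)\circ\sigma = (\theta\circ\ds_t, j_{(-t)}) = (\iota_{(-t)}\circ\theta_{(t)}, j_{(-t)})$. This equals $\ds_{-t}\circ\theta_{(t)}$, because $j_{(-t)}\circ\theta_{(t)} = j_{(-t)}$: a constant map composed with anything is the same constant map. Applying $\theta$ yields $\iota_{(-t)}\circ\theta_{(-t)}\circ\theta_{(t)}$, where $\theta_{(-t)}\circ\theta_{(t)}$ now maps $\M|_{D_{(t)}}$ into $\M|_{D_{(t)}}$.
\item Along the bottom path, $(\1_\M\times +)\circ\da\circ\sigma = (\iota_{(t)}, +\circ(j_{(t)},j_{(-t)})) = (\iota_{(t)}, j_{(0)}) = \ds_0|_{D_{(t)}}$. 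This uses that $+^\ast(t)=t_1+t_2$ pulls back to the constant $t+(-t)=0$. Applying $\theta$ and the first diagram of (\ref{eq_flowdiagrams}) gives $\iota_{(t)}$.
\end{itemize}

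Hence $\iota\circ(\theta_{(-t)}\circ\theta_{(t)}) = \iota_{(t)}$. Inclusions of open submanifolds are monomorphisms, so $\theta_{(-t)}\circ\theta_{(t)}=\1$.

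The main obstacle is bookkeeping with restrictions. We must verify that maps into products are determined by their components, and that each composite factors through the indicated open submanifolds, in particular that $\sigma$ lands in $\D^\bullet$. That step is exactly where the maximality property (p2) is essential. The map $\theta_{(t)}$ is defined via the universal property of the open embedding $\iota_{(-t)}$; uniqueness of such factorizations is what lets us cancel the inclusions at the end.
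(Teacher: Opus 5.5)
Your proof is correct and follows the paper's argument essentially step for step. Your map $(\ds_{t}, j_{(-t)})$ is exactly the paper's $\Delta_{-} \circ \ds_{t}$ with $\Delta_{-} = (\1_{\M \times \R}, - \circ p_{2})$, and the paper evaluates both paths of the second diagram in (\ref{eq_flowdiagrams}) on it just as you do, using (p2) to land in $D' = D^{\bullet}$ and cancelling the inclusion at the end. One small slip: on the top path, applying $\theta$ should give $\iota_{(t)} \circ \theta_{(-t)} \circ \theta_{(t)}$ rather than $\iota_{(-t)} \circ \theta_{(-t)} \circ \theta_{(t)}$, since $\theta_{(-t)}$ lands in $\M|_{D_{(t)}}$, as you yourself note.
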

    \begin{proof}
        Let us first consider a graded smooth map
        \begin{equation} \label{eq_Delta-}
            \Delta_{-} := (\1_{\M \times \R}, - \circ p_{2}): \M \times \R \rightarrow (\M \times \R) \times \R,
        \end{equation}
        where $-: \R \rightarrow \R$ is the smooth map $t \mapsto -t$ and $p_{2}: \M \times \R \rightarrow \R$ is the projection. Its underlying smooth map is just $(m,t) \mapsto ((m,t),-t)$. Observe that whenever $(m,t) \in D$, one has $((m,t),-t) \in D'$. This follows from \textit{(p2)} in Theorem \ref{thm_fundamentalordinary}. We can thus form a composition 
        \begin{equation}
        \Delta_{-} \circ \ds_{t}: \M|_{D_{(t)}} \rightarrow ((\M \times \R) \times \R)|_{D'}.
        \end{equation}
        Our intentions are to precompose both paths along the second commutative diagram in (\ref{eq_flowdiagrams}) with this map. It is not difficult to see that 
        \begin{equation}
            (\1_{\M} \times +) \circ \da \circ \Delta_{-} \circ \ds_{t} = \ds_{0}|_{D_{(t)}},
        \end{equation}
        so the composition with the left-lower path of the diagram gives 
        \begin{equation}
            \theta \circ (\1_{\M} \times +) \circ \da \circ \Delta_{-} \circ \ds_{t} = \ds_{0}|_{D_{(t)}} = \theta \circ \ds_{0}|_{D_{(t)}} =  \iota_{(t)} \circ \1_{\M|_{D_{(t)}}}.
        \end{equation}
        where $\iota_{(t)}: \M|_{D_{(t)}} \rightarrow \M$ is the inclusion. On the other hand, one has 
        \begin{equation}
            (\theta \times \1_{\R}) \circ \Delta_{-} \circ \ds_{t} = \ds_{-t} \circ \theta_{(t)}. 
        \end{equation}
        This can be seen easily by composing both side with projections onto $\M$ and $\R$, respectively. Hence the composition with the upper-right path of the second diagram in (\ref{eq_flowdiagrams}) with $\Delta_{-} \circ \ds_{t}$ gives 
        \begin{equation}
            \theta \circ (\theta \times \1_{\R}) \circ \Delta_{-} \circ \ds_{t} = \theta \circ \ds_{-t} \circ \theta_{(t)} = \iota_{(t)} \circ \theta_{(-t)} \circ \theta_{(t)}. 
        \end{equation}
        The commutativity of the second diagram in (\ref{eq_flowdiagrams}) and the fact that $\iota_{(t)}$ is an embedding of $\M|_{D_{(t)}}$ into $\M$ thus gives the formula
        \begin{equation}
            \theta_{(-t)} \circ \theta_{(t)} = \1_{\M|_{D_{(t)}}}.
        \end{equation}
        The reverse composition is obtained by replacing $t$ with $-t$. Hence $\theta_{(-t)}$ is the inverse to $\theta_{(t)}$. 
    \end{proof}
    For $k = 0$, we can now reformulate the invariance of $Y$ with respect to the flow $\theta$ of $X$. 
    \begin{tvrz} \label{tvrz_invariancefordegreezero}
       A vector field $Y \in \X_{\M}(M)$ is invariant under the flow $\theta$ of a degree zero vector field $X \in \X_{\M}(M)$, iff for all $t \in \R$ with $D_{(t)} \neq \emptyset$, one has
        \begin{equation} \label{eq_Yinvarethetatrelated}
            Y|_{D_{(t)}} \sim_{\theta_{(t)}} Y|_{D_{(-t)}}.
        \end{equation}
    \end{tvrz}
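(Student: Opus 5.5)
The invariance condition (\ref{eq_invariantVF}) reads explicitly
\begin{equation*}
(Y \otimes 1) \circ \theta^{\ast} = \theta^{\ast} \circ Y,
\end{equation*}
as maps $\C^\infty_\M(M) \to \C^\infty_\D(D)$. The relation (\ref{eq_Yinvarethetatrelated}) reads
\begin{equation*}
Y|_{D_{(t)}} \circ \theta_{(t)}^\ast = \theta_{(t)}^\ast \circ Y|_{D_{(-t)}}.
\end{equation*}
By (\ref{eq_thetatmap}), we have $\theta_{(t)}^\ast(f|_{D_{(-t)}}) = \ds_t^\ast(\theta^\ast(f))$. The key auxiliary identity I would establish first is that the lift $Y\otimes 1$ is $\ds_t$-related to $Y|_{D_{(t)}}$:
\begin{equation*}
\ds_t^\ast \circ (Y \otimes 1) = Y|_{D_{(t)}} \circ \ds_t^\ast .
\end{equation*}
This is checked locally in coordinates $(\bbz^\lambda, t)$ on $\M\times\R$. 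Here $Y\otimes 1 = Y^\lambda \otimes 1\, \partial_\lambda$ contains no $\partial_t$ component, and $\ds_t^\ast$ merely substitutes the value $t$ into the coefficient functions of the time variable. The two operations therefore commute, since partial derivatives in the $\M$-directions commute with evaluation at a fixed $t$. Since sheaf morphisms and derivations are local, this suffices.

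\emph{Forward direction.} Assume $Y$ is invariant. Apply $\ds_t^\ast$ to both sides of the invariance identity evaluated on $f\in\C^\infty_\M(M)$ and use the auxiliary identity to get
\begin{equation*}
Y(\theta_{(t)}^\ast(f|_{D_{(-t)}})) = \theta_{(t)}^\ast(Y(f)|_{D_{(-t)}}).
\end{equation*}
To pass from global $f$ to arbitrary sections over open subsets of $D_{(-t)}$, I would use that both sides are morphisms of sheaves and that every local function is locally a restriction of a global one, by graded bump functions. This gives (\ref{eq_Yinvarethetatrelated}).

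\emph{Converse.} This is the main obstacle, because one must recover an identity of functions on $\D$ from their restrictions to the slices $t=\text{const}$. Set
\begin{equation*}
g := (Y\otimes 1)(\theta^\ast f) - \theta^\ast(Y f)\in \C^\infty_\D(D).
\end{equation*}
The hypothesis together with the auxiliary identity gives $\ds_t^\ast(g)=0$ for every $t$ with $D_{(t)}\neq\emptyset$. I would then prove a lemma: a function $g$ on $\D=(\M\times\R)|_D$ with $\ds_t^\ast g = 0$ for all $t$ vanishes. Locally, on a product chart $W\times I\subseteq D$, $g$ is a formal power series in the nonzero-degree coordinates with coefficients $g_{\fp}(x,t)$ smooth on $W\times I$. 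The pullback $\ds_t^\ast g$ is the series with coefficients $g_{\fp}(\cdot,t)$, so each coefficient vanishes identically and hence $g=0$ locally. Since product charts cover $D$, $g$ vanishes by the sheaf property. Care is needed to ensure that the coefficient description of $\ds_t^\ast$ is exactly evaluation; I would verify this directly from the definition of $\ds_t$ via $p_1\circ\ds_t=\iota_{(t)}$ and $p_2\circ\ds_t=j_{(t)}$ on coordinate functions.
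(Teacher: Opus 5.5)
Your proposal is correct and follows essentially the same route as the paper. The forward direction composes the invariance identity with $\ds_t^\ast$, using $Y|_{D_{(t)}} \sim_{\ds_t} Y \otimes 1$ and $\theta \circ \ds_t = \iota_{(-t)} \circ \theta_{(t)}$, and then removes $\iota_{(-t)}^\ast$ by a bump-function/partition-of-unity argument (the paper's Lemma \ref{lem_relatedwithiV}); the converse reverses these steps and concludes by the slice-wise vanishing argument in local product charts, which is exactly the content of the paper's Lemma \ref{lem_invariancecomposedwiths't}.
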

    \begin{proof}
        The equation (\ref{eq_invariantVF}) can be rewritten as 
        \begin{equation} \label{eq_Yinvariantequation}
            (Y \otimes 1) \circ \theta^{\ast} = \theta^{\ast} \circ Y.
        \end{equation}
        Compose both sides with the pullback $\ds^{\ast}_{t}: \C^{\infty}_{\M \times \R}(D) \rightarrow \C^{\infty}_{\M}(D_{(t)})$ and observe that
        \begin{equation}
            Y|_{D_{(t)}} \sim_{\ds_{t}} Y \otimes 1, \; \; \theta \circ \ds_{t} = \iota_{(-t)} \circ \theta_{(t)}, \; \; Y|_{D_{(-t)}} \sim_{\iota_{(-t)}} Y. 
        \end{equation}
        We obtain the equation
        \begin{equation}
         Y|_{D_{(t)}} \circ \theta^{\ast}_{(t)} \circ \iota^{\ast}_{(-t)} = \theta^{\ast}_{(t)} \circ Y|_{D_{(-t)}} \circ \iota^{\ast}_{(-t)}. 
        \end{equation}
        Now, although $\iota^{\ast}_{(-t)}$ is not necessarily surjective, we can already use this equation to deduce 
        \begin{equation} \label{eq_YthetatrelatedtoYequation}
            Y|_{D_{(t)}} \circ \theta^{\ast}_{(t)} = \theta^{\ast}_{(t)} \circ Y|_{D_{(-t)}},
        \end{equation}
        that is precisely the equation (\ref{eq_Yinvarethetatrelated}). See Lemma \ref{lem_relatedwithiV} in the appendix. This proves the only if part. Conversely, if (\ref{eq_Yinvarethetatrelated}) and thus (\ref{eq_YthetatrelatedtoYequation}) is true for all $t \in \R$ with $D_{(t)} \neq \emptyset$, we compose both its sides with $i^{\ast}_{(-t)}$ and reverse the above steps to obtain
        \begin{equation}
             \ds^{\ast}_{t} \circ (Y \otimes 1) \circ \theta^{\ast} = \ds^{\ast}_{t} \circ \theta^{\ast} \circ Y,
        \end{equation}
        for all $t \in \R$ with $D_{(t)} \neq \emptyset$. The claim now follows from Lemma \ref{lem_invariancecomposedwiths't}. 
    \end{proof}
    It turns out that vector fields are invariant under their own flows.
    \begin{tvrz} \label{tvrz_Xisinvariantunderitsflow}
        Let $X \in \X_{\M}(M)$ be any vector field. 

        Then $X$ is invariant under its own flow $\theta$. 
    \end{tvrz}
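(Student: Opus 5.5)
We have to show $X \otimes 1 \sim_\theta X$, that is, $(X\otimes 1)\circ\theta^\ast = \theta^\ast\circ X$ on $\C^\infty_\M(M)$. The plan is to deduce this from the second (group-law) diagram in (\ref{eq_flowdiagrams}) together with the relation $1\otimes\partial_{\bbt}\sim_\theta X$. The idea is the graded analogue of the ordinary identity $\theta_{(s)*}X=X$. That identity is proved by differentiating $\theta(\theta(m,s),t)=\theta(m,s+t)$ in $t$ at $t=0$ and, alternatively, in $s$.

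First, I work on $\D^\bullet$ and consider the vector field $1\otimes 1\otimes\partial_{\bbt_2}$, the lift of $\partial_{\bbt}$ from the last factor of $(\M\times\R[-k])\times\R[-k]$.
\begin{itemize}
\item Along the upper path, $\theta\times\1$ relates it to $1\otimes\partial_{\bbt}$ on $\D$, which is $\theta$-related to $X$.
\item Along the lower path, $\da$ relates it to $1\otimes\partial_{\bbt_2}$ on $\M\times(\R[-k]\times\R[-k])$. Since $+^\ast(\bbt)=\bbt_1+\bbt_2$, the map $\1\times+$ relates this to $1\otimes\partial_{\bbt}$, and $\theta$ relates that to $X$.
\end{itemize}
This only reproves consistency, so it is not enough. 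The useful vector field is instead $(1\otimes\partial_{\bbt})\otimes 1 - 1\otimes1\otimes\partial_{\bbt_2}$, i.e.\ $\partial_{\bbt_1}-\partial_{\bbt_2}$ in the time variables. Under $\1\times+$ it maps to zero, because it kills $\bbt_1+\bbt_2$. Hence the lower path gives
\begin{equation}
((1\otimes\partial_{\bbt})\otimes1)\circ(\theta\times\1)^\ast\theta^\ast = (1\otimes1\otimes\partial_{\bbt_2})\circ(\theta\times\1)^\ast\theta^\ast .
\end{equation}
The left side, computed along the upper path, contains $(\theta\times\1)^\ast$ applied to something. I then need an identity for $(\theta\times\1)^\ast\circ (X\otimes 1)$ versus $((1\otimes\partial_{\bbt})\otimes 1)\circ(\theta\times\1)^\ast$ on functions on $\D$. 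By naturality of products, $(1\otimes\partial_{\bbt})\otimes1 \sim_{\theta\times\1} X\otimes 1$, using $1\otimes\partial_{\bbt}\sim_\theta X$ in the first factor.

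Combining these gives, on $\D^\bullet$,
\begin{equation}
(\theta\times\1)^\ast\big((X\otimes1)\theta^\ast f\big) = (\theta\times\1)^\ast\big((1\otimes\partial_{\bbt})\theta^\ast f\big)=(\theta\times\1)^\ast\theta^\ast(Xf),
\end{equation}
where the last equality uses the relation $1\otimes\partial_{\bbt}\sim_\theta X$ once more. I have to be careful that the signs from $\partial_{\bbt_1}$ passing the odd variables are consistent when $k$ is odd. Finally, I pull back along $\ds_0\times\1$, i.e.\ the map $(m,t)\mapsto((m,0),t)$, which lands in $D^\bullet$ by (p2). Since $(\theta\times\1)\circ(\ds_0\times\1)=\1_{\D}$ by the first diagram, this cancels $(\theta\times\1)^\ast$ and yields $(X\otimes1)\theta^\ast f=\theta^\ast Xf$.

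The main obstacle is the middle step: making the relation $(1\otimes\partial_{\bbt_1})\otimes 1\sim(\text{via }\1\times+\text{ and }\da)$ precise on restricted open submanifolds in the degree zero case. That restriction is exactly what the argument was organised around, and the order of $s$ and $t$ must be arranged so that the pulled-back section lies in $\D^\bullet$; the differentiated variable must correspond to $\theta$'s own time slot. Doing this requires checking the relations on the coordinate functions $\bbt_1,\bbt_2$ and on functions pulled back from $\M$, and then extending to all functions. That extension holds because vector fields on products are determined by their action on such generators locally, which needs a brief density/uniqueness argument. In the case $k\neq0$ everything is global and only the graded signs need care.
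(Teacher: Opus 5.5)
Your argument is correct and is essentially the paper's proof: act on the group-law identity $(\theta\times\1_{\R[-k]})^\ast\circ\theta^\ast=\da^\ast\circ(\1_{\M}\times+)^\ast\circ\theta^\ast$ with $(1\otimes\partial_{\bbt})\otimes1$, use $(1\otimes\partial_{\bbt})\otimes1\sim_{\theta\times\1_{\R[-k]}}X\otimes1$ on one side and $1\otimes\partial_{\bbt}\sim_\theta X$ on the other, then cancel $(\theta\times\1_{\R[-k]})^\ast$ by pulling back along $\ds_0\times\1_{\R[-k]}$. The only cosmetic difference is that the paper applies right-invariance $\partial_{\bbt}\otimes1\sim_{+}\partial_{\bbt}$ directly on the lower path, whereas you use $\partial_{\bbt_1}-\partial_{\bbt_2}\sim_{+}0$ and route $\partial_{\bbt_2}$ back through the upper path, which amounts to the same thing; the domain issue you flag is harmless, since $((m,0),t)\in D^\bullet$ for $(m,t)\in D$ follows directly from the definitions, without needing (p2).
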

    \begin{proof}
        The commutativity of the second diagram in (\ref{eq_flowdiagrams}) can be written as 
        \begin{equation} \label{eq_flowdiagraminpullbacks}
            (\theta \times \1_{\R[-k]})^{\ast} \circ \theta^{\ast} = \da^{\ast} \circ (\1_{\M} \times +)^{\ast} \circ \theta^{\ast}. 
        \end{equation}
        Let us now act on both sides by the vector field $(1 \otimes \partial_{\bbt}) \otimes 1$. For the left-hand side, one finds
        \begin{equation}
            ((1 \otimes \partial_{\bbt}) \otimes 1) \circ (\theta \times \1_{\R[-k]})^{\ast} \circ \theta^{\ast} = (\theta \times \1_{\R[-k]})^{\ast} \circ (X \otimes 1) \circ \theta^{\ast},
        \end{equation}
        where we have used the fact that $1 \otimes \partial_{\bbt} \sim_{\theta} X$ implies $(1 \otimes \partial_{\bbt}) \otimes 1 \sim_{\theta \times \1_{\R[-k]}} X \otimes 1$. For the right-hand side, note that $(1 \otimes \partial_{\bbt}) \otimes 1$ is $\da$-related to $1 \otimes (\partial_{\bbt} \otimes 1)$, so 
        \begin{equation}
            ((1 \otimes \partial_{\bbt}) \otimes 1) \circ \da^{\ast} \circ (\1_{\M} \times +)^{\ast} \circ \theta^{\ast} = \da^{\ast} \circ (1 \otimes (\partial_{\bbt} \otimes 1)) \circ (\1_{\M} \times +)^{\ast} \circ \theta^{\ast} 
        \end{equation}
        Now, since $(\R,+)$ is Abelian, $\partial_{\bbt}$ is also right-invariant and we have $\partial_{\bbt} \otimes 1 \sim_{+} \partial_{\bbt}$. Hence
        \begin{equation}
            (1 \otimes (\partial_{\bbt} \otimes 1)) \circ (\1_{\M} \times +)^{\ast} = (\1_{\M} \times +)^{\ast} \circ (1 \otimes \partial_{\bbt}).
        \end{equation}
        We can use this together with $1 \otimes \partial_{\bbt} \sim_{\theta} X$ to get 
        \begin{equation}
        \begin{split}
        \da^{\ast} \circ (1 \otimes (\partial_{\bbt} \otimes 1)) \circ (\1_{\M} \times +)^{\ast} \circ \theta^{\ast} = & \ \da^{\ast} \circ (\1_{\M} \times +)^{\ast} \circ \theta^{\ast} \circ X \\
        = & \ (\theta \times \1_{\R[-k]})^\ast \circ \theta^{\ast} \circ X,
        \end{split}
        \end{equation}
        where we have utilized (\ref{eq_flowdiagraminpullbacks}). To summarize, by acting by $(1 \otimes \partial_{\bbt}) \otimes 1$ on (\ref{eq_flowdiagraminpullbacks}), we get
        \begin{equation}
            (\theta \times \1_{\R[-k]})^{\ast} \circ (X \otimes 1) \circ \theta^{\ast} = (\theta \times \1_{\R[-k]})^\ast \circ \theta^{\ast} \circ X.
        \end{equation}
        Finally, act on both sides by $(\ds_{0} \times \1_{\R[-k]})^{\ast}$ and use $(\theta \circ \1_{\R[-k]}) \circ (\ds_{0} \times \1_{\R-k]})|_{D} = \1_{\D}$. This gives
        \begin{equation}
            (X \otimes 1) \circ \theta^{\ast} = \theta^{\ast} \circ X.
        \end{equation}
        But this means precisely that $X$ is invariant under the flow $\theta$. 
    \end{proof}
    
    There exists a simple criterion for $Y$ to be invariant under the flow $\theta$ of $X$.
    \begin{theorem} \label{thm_invariance}
        Let $X,Y \in \X_{\M}(M)$ be vector fields on $\M$, and let $\theta$ be the flow of $X$. 

        Then $Y$ is invariant under the flow $\theta$, iff $[X,Y] = 0$. 
    \end{theorem}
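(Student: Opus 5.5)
The \emph{only if} direction is short. Suppose $(Y \otimes 1) \circ \theta^{\ast} = \theta^{\ast} \circ Y$. The vector fields $Y \otimes 1$ and $1 \otimes \partial_{\bbt}$ on $\D$ graded-commute, since they act on different factors of the product. Using $1 \otimes \partial_{\bbt} \sim_{\theta} X$ together with $Y \otimes 1 \sim_{\theta} Y$, the bracket of related fields is related, so
\begin{equation}
0 = [1 \otimes \partial_{\bbt}, Y \otimes 1] \sim_{\theta} [X,Y].
\end{equation}
This means $\theta^{\ast} \circ [X,Y] = 0$. Composing with $\ds_{0}^{\ast}$ and using the first diagram in (\ref{eq_flowdiagrams}) gives $[X,Y] = 0$, exactly as at the end of the proof that $[X,X]=0$ for generators.

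For the \emph{if} direction, assume $[X,Y]=0$. For $f \in \C^\infty_\M(M)$, consider the defect
\begin{equation}
\Phi(f) := (Y \otimes 1)\theta^{\ast}(f) - (-1)^{|X||Y|}\cdots - \theta^{\ast}(Y f),
\end{equation}
and more precisely set $\Phi := (Y \otimes 1) \circ \theta^{\ast} - \theta^{\ast} \circ Y$, a $\theta^\ast$-derivation. Applying $1 \otimes \partial_{\bbt}$ and using $[1\otimes\partial_{\bbt}, Y\otimes 1]=0$ together with (\ref{eq_partialxitheterelated2}) yields
\begin{equation}
(1 \otimes \partial_{\bbt})\circ \Phi = (-1)^{|X||Y|}\Phi \circ X + \theta^{\ast}\circ[X,Y] = (-1)^{|X||Y|}\Phi \circ X.
\end{equation}
The initial condition is $\ds_{0}^{\ast} \circ \Phi = Y - Y = 0$. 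It therefore suffices to prove a uniqueness statement: any $\theta^{\ast}$-derivation $\Phi$ satisfying this linear evolution equation with zero initial value vanishes.

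For $k \neq 0$, this uniqueness is combinatorial. Since $\D = \M \times \R[-k]$, expand $\Phi(f) = \sum_{n} \Phi_n(f)\,\tau^n$ (only $n\in\{0,1\}$ for $k$ odd). The equation gives the recursion $(n+1)\Phi_{n+1} = \pm\Phi_n \circ X$ up to pullbacks, and $\Phi_0 = 0$ then forces all $\Phi_n = 0$. This is the same coefficient-matching used in Appendix \ref{app_proofneq0}.

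For $k = 0$, which I expect to be the main obstacle, I would localize. Vanishing of $\Phi(f)$ on $D$ is local, and by the flow property it reduces to checking it on sets of the form $(U \times I) \cap D$ with $U$ a coordinate chart, using the maps $\theta_{(t)}$ of Lemma \ref{lem_thetatisdiffeo} to translate in time. Alternatively, Proposition \ref{tvrz_invariancefordegreezero} reduces the claim to $Y \sim_{\theta_{(t)}} Y$. On a graded domain, apply $\Phi$ to the coordinate functions $\bbz^\lambda$; since $\Phi$ is a $\theta^\ast$-derivation, it is determined by these values. The equation becomes a linear system of ODEs in $t$ for the formal power series coefficients of $\Phi(\bbz^\lambda)$, with smooth dependence on parameters. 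This system is triangular in the polynomial degree in the graded variables, so uniqueness for linear ODEs, applied order by order, gives $\Phi = 0$ on each $t$-interval $D^{(m)}$, which is connected and contains $0$. The delicate point is that $\theta$ does not map a chart into a chart for all $t$. I would handle this by covering the compact segment $\{m\}\times[0,t]$ with finitely many short time steps inside charts, and propagating vanishing through the group law, mirroring the technique of Appendix \ref{app_proof0}.
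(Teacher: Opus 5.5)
Your argument is correct. The \emph{only if} part is exactly the paper's. For the \emph{if} part with $|X| = 0$, however, you take a genuinely different route.

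The paper does not integrate your defect $\Phi = (Y\otimes 1)\circ\theta^{\ast} - \theta^{\ast}\circ Y$ directly. It works instead with $\fM = \Delta_{-}^{\ast}\circ(\theta\times\1_{\R})^{\ast}\circ(Y\otimes 1)\circ\theta^{\ast} - p_{1}^{\ast}\circ Y$. Since the involution $\phi = (\theta\times\1_{\R})\circ\Delta_{-}$ satisfies $\theta\circ\phi = p_{1}$, one has exactly $\fM = \phi^{\ast}\circ\Phi$.

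Pulling back along this ``time reversal'' turns your transport equation $(1\otimes\partial_{t})\circ\Phi = \Phi\circ X$ into $(1\otimes\partial_{t})\circ\fM = 0$. This step uses the second flow diagram and Proposition \ref{tvrz_Xisinvariantunderitsflow}. Afterwards each $\fM(f)$ is separately constant in $t$ and vanishes at $t=0$. No target coordinates, no derivation structure and no nontrivial ODE are needed.

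Your route uses only the generator equation, the initial condition and the fact that $D$ is a flow domain. So it would apply to any local solution of the flow equations, not just to a map already known to be a flow. The price is that your equation couples $\Phi(f)$ with $\Phi(Xf)$, so you must:
\begin{enumerate}[(a)]
\item pass to target charts along the trajectory;
\item invoke the standard fact, not proved in the paper, that a $\theta^{\ast}$-derivation is determined by its values on coordinate functions;
\item run a weight-by-weight uniqueness argument for linear ODE systems, as in Proposition \ref{tvrz_pivotal}.
\end{enumerate}
All of this goes through, since the system is indeed triangular in the weight and linear at each step.

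For $k \neq 0$, your recursion $\ds_{0}^{\ast}\circ(1\otimes\partial_{\tau})^{n}\circ\Phi = \pm\,\ds_{0}^{\ast}\circ\Phi\circ X^{n} = 0$ is essentially the paper's coefficient comparison. It is phrased without using the explicit exponential formula for $\theta^{\ast}$.

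Two small slips:
\begin{enumerate}[(a)]
\item The correct identity is $(1\otimes\partial_{\bbt})\circ\Phi = (-1)^{|X||Y|}\Phi\circ X - \theta^{\ast}\circ[X,Y]$. The sign of the bracket term is harmless here, since $[X,Y] = 0$.
\item Propagating the vanishing along the finitely many time steps needs only overlapping intervals plus ODE uniqueness, not the group law.
\end{enumerate}
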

    \begin{proof}
        Let us prove the easy direction first. Suppose that $Y$ is invariant under the flow $\theta$, that is $Y \otimes 1 \sim_{\theta} Y$. By definition of the flow, we have $1 \otimes \partial_{\bbt} \sim_{\theta} X$. But then 
        \begin{equation}
            [1 \otimes \partial_{\bbt}, Y \otimes 1] \sim_{\theta} [X,Y].
        \end{equation}
        The commutator on the left-hand side is easily seen to be zero, hence $0 \sim_{\theta} [X,Y]$, that is 
        \begin{equation}
            \theta^{\ast} \circ [X,Y] = 0.
        \end{equation}
        Now pull back both sides with $\ds_{0} = (\1_{\M},0_{\M})$ and use the first diagram in (\ref{eq_flowdiagrams}) to get $[X,Y] = 0$. The proof of the fact that $[X,Y] = 0$ makes $Y$ invariant under the flow $\theta$ is a bit more involved, especially in the $k = 0$ case. An interested reader can find it in Appendix \ref{app_invariance}.
    \end{proof}
\section{Commuting flows} \label{sec_commuting}
In this section, we would like to define what does it mean for two flows of vector fields on graded manifolds to commute. Even in ordinary geometry, this is not always done correctly. 

Let $X,Y \in \X_{M}(M)$ be vector fields on an ordinary manifold $M$, and let $\theta^{X}: D_{1} \rightarrow M$ and $\theta^{Y}: D_{2} \rightarrow M$ be the corresponding flows. One says that the two flows \textbf{commute}, if
\begin{equation} \label{eq_commuteordinary}
    \theta^{Y}(\theta^{X}(m,s),t) = \theta^{X}(\theta^{Y}(m,t),s).
\end{equation}
This equation is not well-defined for all $m \in M$ and $s,t \in \R$. Let us define open subsets
\begin{align}
    D'_{1} = & \ \{ ((m,s),t) \in D_{1} \times \R \mid \theta^{X}(m,s) \in D_{2} \}, \label{eq_D'1set} \\ 
    D'_{2} = & \ \{ ((m,t),s) \in D_{2} \times \R \mid \theta^{Y}(m,t) \in D_{1} \}. \label{eq_D'2set}
\end{align}
It follows that the equation is defined for $((m,s),t) \in D'_{1} \cap \sigma^{-1}(D'_{2})$, where $\sigma((m,s),t) := ((m,t),s)$ is the ``flip map''. It is a well-known fact that the flows $\theta^{X}$ and $\theta^{Y}$ commute, iff $[X,Y] = 0$. However, it turns out that (\ref{eq_commuteordinary}) is not necessarily true for all $((m,s),t) \in D'_{1} \cap \sigma^{-1}(D'_{2})$. To see why, see Exercise 9-19 in \cite{lee2012introduction}. This leads us to the following definition:
\begin{definice} \label{def_commutingdomain}
    Let $\theta^{X}: D_{1} \rightarrow M$ and $\theta^{Y}: D_{2} \rightarrow M$ be the flows of $X \in \X_{M}(M)$ and $Y \in X_{M}(M)$, respectively. Let $D'_{1}$ and $D'_{2}$ be the open sets (\ref{eq_D'1set}, \ref{eq_D'2set}), and let $\sigma((m,s),t) := ((m,t),s)$. We say that an open subset $\bbD \subseteq (\M \times \R) \times \R$ is a \textbf{commuting domain} of $X$ and $Y$, if
    \begin{enumerate}[(1)]
        \item $\bbD \subseteq D'_{1} \cap \sigma^{-1}(D'_{2})$.
        \item For each $m \in M$, the subset $\bbD^{(m)} = \{ (s,t) \mid ((m,s),t) \in \bbD \}$ has the following property: if $(s,t) \in \bbD^{(m)}$, then $\bbD^{(m)}$ contains the entire closed rectangle with corners $(0,0)$ and $(s,t)$.
    \end{enumerate}
\end{definice}
Let us state some basic facts about commuting domains:
\begin{lemma} \label{lem_commuting}
    \begin{enumerate}[(i)]
        \item For any $X,Y \in \X_{M}(M)$, there exists a commuting domain of $X$ and $Y$.
        \item Any finite intersection and any union of commuting domains of $X$ and $Y$ is a commuting domain of $X$ and $Y$.
        \item For each $X,Y \in \X_{M}(M)$, there exists their unique \textbf{maximal commuting domain}, that is a commuting domain containing all their commuting domains.
        \item If both $X$ and $Y$ are complete, their  maximal commuting domain is $(M \times \R) \times \R$. 
        \item Let $m \in M$. Suppose that there are open intervals $I,J \in \Op_{0}(\R)$, such that $((m,s),t) \in D'_{1} \cap \sigma^{-1}(D'_{2})$ for all $(s,t) \in I \times J$. Then $((m,s),t) \in \bbD$, where $\bbD$ is the maximal commuting domain of $X$ and $Y$. 
        \item Let $\bbD$ be any commuting domain. Let $((m,s),t) \in \bbD$. Then there is $U \in \Op_{m}(M)$ and open intervals $I,J \in \Op_{0}(\R)$, such that $((m,s),t) \in (U \times I) \times J \subseteq \bbD$. 
    \end{enumerate}
\end{lemma}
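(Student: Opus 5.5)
Each item follows from the definition of a commuting domain together with elementary topology of $M \times \R \times \R$ and the properties of ordinary maximal flows (Theorem \ref{thm_fundamentalordinary}). I will use (p2) to control the sets $D'_1, D'_2$. I would prove the items in the order (ii), (vi), (i), (iii), (iv), (v).

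For (ii), both conditions of Definition \ref{def_commutingdomain} are stable under unions and finite intersections. Condition (1) is a containment in a fixed set. Condition (2) holds fiberwise: if $(s,t)$ lies in every $\bbD_i^{(m)}$, then each of them contains the rectangle. For a union, $(s,t)$ lies in some $\bbD_i^{(m)}$, whose rectangle sits inside the union. Openness is clear, and finiteness is needed only so that intersections stay open.

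For (vi), let $R$ be the closed rectangle with corners $(0,0)$ and $(s,t)$. Then $\{m\}\times R \subseteq \bbD$ by (2), and this set is compact. By the tube lemma there are $U \in \Op_m(M)$ and an open set $W \supseteq R$ with $U \times W \subseteq \bbD$. Since $W$ is an open neighbourhood of a compact rectangle, I can choose $I,J \in \Op_0(\R)$ such that $I\times J$ contains $R$ (slightly enlarged) and lies in $W$.

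For (i), I fix $m$ and use that $D_1, D_2$ are open and contain $M\times\{0\}$, and that $\theta^X(m,0)=m\in D_2$ (via $(m,0) \in D_2$). Continuity then gives $U_m$ and $\epsilon_m>0$ such that $(U_m\times(-\epsilon_m,\epsilon_m))\times(-\epsilon_m,\epsilon_m) \subseteq D'_1\cap\sigma^{-1}(D'_2)$. This product is a commuting domain, because its fibers are squares centred at $0$. Taking the union over $m$ and applying (ii) gives a commuting domain. Note that a commuting domain as defined need not make the flows commute on it; that is presumably proved later and is not required here.

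For (iii), the union of all commuting domains is a commuting domain by (ii), and it is the unique maximal one. For (iv), completeness gives $D_1=D_2=M\times\R$, so $D'_1\cap\sigma^{-1}(D'_2)$ is the whole space. The whole space trivially satisfies (2), so it is the maximal commuting domain.

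Item (v) is the main subtlety, because the hypothesis concerns only the single point $m$, while a commuting domain must be open. I would take $\bbD_0$ to be the set of all $((m',s'),t')$ admitting $U'\in\Op_{m'}(M)$ and open intervals $I',J'\ni 0$ such that $(U'\times I')\times J' \subseteq D'_1\cap\sigma^{-1}(D'_2)$ and $(s',t') \in I'\times J'$. This is a union of open boxes, so it is open and satisfies (1). It satisfies (2) because products of intervals containing $0$ contain the whole rectangle. Hence $\bbD_0\subseteq\bbD$. It remains to show $((m,s),t)\in\bbD_0$ for a given $(s,t)\in I\times J$, i.e.\ to thicken the fiber box in the $M$-direction.

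To do this I pick compact intervals $[a,b]\subseteq I$ and $[c,d]\subseteq J$ containing $0$, $s$ and $t$. The set $\{m\}\times[a,b]\times[c,d]$ is compact and lies in the open set $D'_1\cap\sigma^{-1}(D'_2)$, which is open by continuity of the flows and openness of the flow domains. The tube lemma then gives $U$ and slightly larger open intervals $I'\supseteq[a,b]$, $J'\supseteq[c,d]$ with $(U\times I')\times J'$ inside the set. Hence $((m,s),t)\in\bbD_0\subseteq\bbD$. This tube-lemma step is the only point needing care, namely keeping both intervals open and containing $0$.
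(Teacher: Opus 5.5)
Your proposal is correct and follows essentially the same route as the paper. Both use a union of small boxes around each $((m,0),0)$ for (i), fiberwise stability for (ii), and the union of all commuting domains for (iii). Both also use a compactness argument on the closed rectangle spanned by $(0,0)$ and $(s,t)$ for (v) and (vi).

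The differences are cosmetic. You invoke the tube lemma where the paper takes an explicit finite subcover, intersects the neighbourhoods $U_\alpha$, and fits an open rectangle inside the union. In (v), you place the point in the union of all admissible boxes instead of adjoining a single box $S$ to the maximal domain; both arguments rely on maximality in the same way.
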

\begin{proof}
    See Appendix \ref{subsec_factscommuting}.
\end{proof}
The following statement is just a minor restatement of Theorem 9.44 in \cite{lee2012introduction}. Its version stated here can be easily obtained by using Lemma \ref{lem_commuting}-$(v),(vi)$.
\begin{theorem} \label{thm_commutingordinary}
    Let $X,Y \in \X_{M}(M)$. Let $\theta^{X}: D_{1} \rightarrow M$ and $\theta^{Y}: D_{2} \rightarrow M$ be the corresponding flows. Let $\bbD \subseteq (M \times \R) \times \R$ be the maximal commuting domain of $X$ and $Y$. 

    Then (\ref{eq_commuteordinary}) holds for all $((m,s),t) \in \bbD$, iff $[X,Y] = 0$.
\end{theorem}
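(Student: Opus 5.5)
The plan is to reduce the statement to Theorem 9.44 in \cite{lee2012introduction}, which asserts that $[X,Y]=0$ is equivalent to the commutation relation (\ref{eq_commuteordinary}) holding whenever $m \in M$ and $(s,t)$ lies in an open rectangle $I \times J$ containing the origin on which both sides are defined. The translation between that formulation and the maximal commuting domain $\bbD$ is provided by Lemma \ref{lem_commuting}-$(v)$ and $(vi)$.

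\emph{The ``if'' direction.} Assume $[X,Y]=0$ and fix $((m,s),t) \in \bbD$.
\begin{enumerate}[(1)]
\item By Lemma \ref{lem_commuting}-$(vi)$ there are $U \in \Op_{m}(M)$ and open intervals $I,J \in \Op_{0}(\R)$ with $((m,s),t) \in (U \times I) \times J \subseteq \bbD$.
\item By property (1) of Definition \ref{def_commutingdomain}, $\bbD \subseteq D'_{1} \cap \sigma^{-1}(D'_{2})$. Hence both $\theta^{Y}(\theta^{X}(m,s'),t')$ and $\theta^{X}(\theta^{Y}(m,t'),s')$ are defined for all $(s',t') \in I \times J$.
\item This is exactly the rectangle hypothesis of Lee's theorem. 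Applying it yields (\ref{eq_commuteordinary}) at $(s,t)$.
\end{enumerate}

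The step requiring the most care is matching the hypotheses of Lee's theorem precisely. Lee's statement is phrased for a domain of the form $J \times I$ around the origin together with the requirement that the relevant compositions be defined. I would therefore check that property (2) of Definition \ref{def_commutingdomain} gives what is needed. The rectangle with corners $(0,0)$ and $(s,t)$ lies in $\bbD^{(m)}$, so it is compact and covered by finitely many of the open product neighbourhoods from Lemma \ref{lem_commuting}-$(vi)$. Shrinking these gives genuine open intervals $I \ni 0, s$ and $J \ni 0, t$ on which everything is defined.

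If Lee's version is only available in the weaker form ``$\theta^{X}_s\circ\theta^{Y}_t=\theta^{Y}_t\circ\theta^{X}_s$ on the set where defined for rectangles through $0$,'' I would reprove it directly. Fix $s$ and set $\gamma(t) := \theta^{X}(\theta^{Y}(m,t),s)$. Using the invariance of $Y$ under $\theta^{X}$ (the ordinary version of Theorem \ref{thm_invariance}), the curve $\gamma$ is an integral curve of $Y$ starting at $\theta^{X}(m,s)$. By uniqueness of integral curves it therefore coincides with $t \mapsto \theta^{Y}(\theta^{X}(m,s),t)$ on the connected interval $J$. Connectedness of $J$ and $0 \in J$ are exactly what the rectangle property guarantees.

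\emph{The ``only if'' direction.} Assume (\ref{eq_commuteordinary}) holds on $\bbD$.
\begin{enumerate}[(1)]
\item By Lemma \ref{lem_commuting}-$(vi)$ applied at $((m,0),0)$, it holds on $(U \times I) \times J$. That $((m,0),0)$ lies in $\bbD$ needs justification: one uses Lemma \ref{lem_commuting}-$(v)$, since small rectangles around the origin lie in $D'_{1} \cap \sigma^{-1}(D'_{2})$ by openness of the flow domains.
\item Differentiate $\theta^{X}_{-s}\circ\theta^{Y}_t\circ\theta^{X}_s=\theta^{Y}_t$ in $t$ at $t=0$. This gives $(\theta^{X}_{s})^{\ast}Y = Y$ near $m$ for small $s$.
\item Differentiating in $s$ at $s=0$ gives $\mathcal{L}_X Y = [X,Y] = 0$ at $m$.
\end{enumerate}
Since $m$ was arbitrary, $[X,Y]=0$. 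I expect this direction to be routine.
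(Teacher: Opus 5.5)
Your proposal is correct and follows the paper's route. The paper also treats this theorem as a restatement of Theorem 9.44 in \cite{lee2012introduction}: Lemma \ref{lem_commuting}-$(vi)$ supplies, for the ``if'' direction, an open rectangle containing $(0,0)$ and $(s,t)$ inside $D'_{1}\cap\sigma^{-1}(D'_{2})$, and Lemma \ref{lem_commuting}-$(v)$ places a neighbourhood of $((m,0),0)$ in $\bbD$ for the converse; your integral-curve argument and your differentiation of $(\theta^{X}_{s})^{\ast}Y = Y$ are just Lee's standard proof written out.
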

Let us now go back to the graded setting. Let $\M$ be a graded manifold. Let $X,Y \in \X_{\M}(M)$ be two vector fields. Let $k := |X|$ and $\ell := |Y|$ be their degrees. Let $\theta^{X}: \D_{1} \rightarrow \M$ and $\theta^{Y}: \D_{2} \rightarrow \M$ be their flows. We can consider the canonical flip map 
\begin{equation}
    \sigma: (\M \times \R[-k]) \times \R[-\ell] \rightarrow (\M \times \R[-\ell]) \times \R[-k].
\end{equation}
The definition of maximal commuting domain of $X$ and $Y$ will now depend on their degrees. 
\begin{definice}
    A \textbf{maximal commuting domain} of $X$ and $Y$ is the open submanifold 
    \begin{equation}
    \sfD := ((\M \times \R[-k]) \times \R[-\ell])|_{\bbD},
    \end{equation}
    where the open subset $\bbD$ is defined as follows:
    \begin{enumerate}
        \item For $k = 0$ and $\ell = 0$, $\bbD \subseteq (M \times \R) \times \R$ is a maximal commuting domain of $X_{0}$ and $Y_{0}$.
        \item For $k = 0$ and $\ell \neq 0$, let $\bbD = D_{1} \times \{\ast\}$.
        \item For $k \neq 0$ and $\ell = 0$, let $\bbD = \ul{\sigma}^{-1}( D_{2} \times \{\ast\})$.
        \item For $k \neq 0$ and $\ell \neq 0$, let $\bbD = (M \times \{\ast\}) \times \{\ast\}$.
    \end{enumerate}
    In all cases, $\{ \ast \}$ denotes the underlying  one-point manifold of $\R[-k]$ whenever $k \neq 0$. Moreover, let us consider the open subset $\bbD' := \ul{\sigma}(\bbD)$ and the corresponding open submanifold.
    \begin{equation}
        \sfD' := ((\M \times \R[-\ell]) \times \R[-k])|_{\bbD'}.
    \end{equation}
\end{definice}
This allows us to define a notion of commuting flows of vector fields on graded manifolds.
\begin{definice}
    Let $X \in \X_{\M}(M)$ and $Y \in \X_{\M}(M)$. Let $\theta^{X}: \D_{1} \rightarrow \M$ and $\theta^{Y}: \D_{2} \rightarrow \M$ be the corresponding flows. We say that the flows $\theta^{X}$ and $\theta^{Y}$ \textbf{commute} on the maximal commuting domain $\sfD$, if the diagram
    \begin{equation} \label{eq_commutingdiagram}
        \begin{tikzcd}
            \sfD \arrow{r}{\theta^{X} \times \1_{\R[-\ell]}} \arrow{d}{\sigma} &[3em] \D_{2} \arrow{dd}{\theta^{Y}} \\
            \sfD' \arrow{d}{\theta^{Y} \times \1_{\R[-k]}} & \\[2em]
            \D_{1} \arrow{r}{\theta^{X}} & \M
        \end{tikzcd}
    \end{equation}
    commutes. We do not explicitly write the restrictions of the involved maps.
\end{definice}
We can finally state the main theorem of this section.
\begin{theorem} \label{thm_commuting}
    Let $X,Y \in \X_{\M}(M)$ be vector fields on $\M$. Let $\theta^{X}: \D_{1} \rightarrow \M$ and $\theta^{Y}: \D_{2} \rightarrow \M$ be the corresponding flows. 

    Then the flows $\theta^{X}$ and $\theta^{Y}$ commute on the maximal commuting domain $\sfD$, iff $[X,Y] = 0$. 
\end{theorem}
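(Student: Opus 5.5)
The ``only if'' direction is the easy one, and I will follow the pattern of the easy half of Theorem \ref{thm_invariance}. Assume the diagram (\ref{eq_commutingdiagram}) commutes and write $\Phi := \theta^{Y} \circ (\theta^{X} \times \1_{\R[-\ell]})$ on $\sfD$. Then
\begin{equation*}
1 \otimes \partial_{\bbt_2} \sim_{\Phi} Y,
\end{equation*}
where $\bbt_1,\bbt_2$ are the coordinates of $\R[-k]$ and $\R[-\ell]$; this uses $1\otimes\partial_{\bbt}\sim_{\theta^Y} Y$ and the fact that $\theta^X \times \1$ relates $1\otimes \partial_{\bbt_2}$ to $1 \otimes \partial_{\bbt}$. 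Going around the other path of the diagram, $\theta^{Y} \times \1$ relates $1 \otimes \partial_{\bbt_1}$ to $1\otimes\partial_{\bbt}$, so
\begin{equation*}
1 \otimes \partial_{\bbt_1} \sim_{\Phi} X.
\end{equation*}
The coordinate fields $\partial_{\bbt_1}, \partial_{\bbt_2}$ graded-commute, hence $0 \sim_\Phi [X,Y]$. Restricting along the double zero section $(\ds_0 \times \1)\circ \ds_0$, which lies in $\bbD$ by Lemma \ref{lem_commuting}, and on which $\Phi$ is the identity by the first diagram in (\ref{eq_flowdiagrams}), gives $[X,Y]=0$.

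For the converse, assume $[X,Y]=0$. I will treat the two sides of the diagram as graded smooth maps $\Phi,\Psi: \sfD \to \M$ and show $\Phi^\ast = \Psi^\ast$ by a uniqueness argument for a differential equation in the $\bbt_1$ variable. The two maps satisfy the following.
\begin{enumerate}[(a)]
\item Both maps relate $1 \otimes \partial_{\bbt_1}$ to $X$. For $\Psi = \theta^X \circ \sigma\text{-stuff}$ this is immediate. For $\Phi$ it follows from Theorem \ref{thm_invariance}: $X$ is invariant under $\theta^Y$, i.e. $X\otimes 1 \sim_{\theta^Y} X$, and $1\otimes\partial_{\bbt_1}$ is $(\theta^X\times\1)$-related to $X \otimes 1$.
\item Both maps agree on the slice $\bbt_1 = 0$, where they reduce to $\theta^Y$.
\end{enumerate}

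For $k \neq 0$ the conclusion is purely algebraic. Expand $\Phi^\ast f$ and $\Psi^\ast f$ in powers of $\tau_1$: property (a) determines all higher coefficients recursively from the $\tau_1^0$ coefficient, exactly as in Appendix \ref{app_proofneq0}, and property (b) fixes the $\tau_1^0$ coefficient. Hence $\Phi^\ast = \Psi^\ast$. Symmetrically, if $\ell \neq 0$ one can swap the roles of the two flows and expand in $\tau_2$.

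The main obstacle is the case $k=\ell=0$. Here I first use Theorem \ref{thm_commutingordinary}, which guarantees that the underlying maps agree on $\bbD$. Then I localize: by Lemma \ref{lem_commuting}(vi) and property (2) of commuting domains, each point of $\bbD$ is reached from $t_1 = 0$ along a segment in $s$ that stays inside a box $(U\times I)\times J \subseteq \bbD$. On such a box I want uniqueness of solutions of
\begin{equation*}
(1\otimes\partial_{t_1})F = F\circ X
\end{equation*}
with given initial data at $t_1=0$, where the unknown is a graded smooth map with prescribed underlying map. I would reduce this, as in the degree zero uniqueness step of Appendix \ref{app_proof0}, to a linear system of ODEs for the coefficient functions in local coordinates. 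This reduction should proceed by induction on the polynomial degree of the coefficients in the non-zero-degree variables, covering the orbit segment by finitely many charts. I expect this chart-patching along the flow line to be the technical heart of the argument, and I would reuse the lemmas from Appendix \ref{app_proof0} verbatim for it.
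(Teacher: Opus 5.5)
Your proposal is correct, but it is organized differently from the paper's proof in both directions.

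\emph{Only if.} In degree zero, the paper works on a box $(U\times I)\times J\subseteq\bbD$ around $((m,0),0)$. It applies $1_{\M\times\R}\otimes\partial_t$ to both pullback paths and restricts to $t=0$, which gives the local invariance $(Y\otimes 1)\circ(\theta^X)^\ast=(\theta^X)^\ast\circ Y$. It then invokes the easy half of Theorem~\ref{thm_invariance}. In nonzero degree it shows that commutativity is equivalent to $(Y\otimes1)^r\circ(\theta^X)^\ast=(\theta^X)^\ast\circ Y^r$ for all $r$. Your argument is different: both coordinate fields are related to $X$ and $Y$ through the single map $\Phi$, their graded commutator vanishes, and $\Phi$ is the identity along the double zero section. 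This is uniform in $k,\ell$, needs no localization, and does not use Theorem~\ref{thm_invariance} at all.

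\emph{If, degree zero.} The paper fixes the $X$-time $s$. Via an analogue of Lemma~\ref{lem_twomus} it reduces to comparing $\iota_{(-s)}\circ\theta^X_{(s)}\circ\theta^Y$ with $\theta^Y\circ((\iota_{(-s)}\circ\theta^X_{(s)})\times\1_\R)$ on $U\times J$. It then solves the $Y$-equation in $t$, using invariance of $Y$ in the sliced form of Proposition~\ref{tvrz_invariancefordegreezero}. You instead take the $X$-time as the evolution variable, with $(m,t)$ as parameters, and use the unsliced relation $X\otimes1\sim_{\theta^Y}X$. This bypasses the local diffeomorphisms $\theta^X_{(s)}$ and the slice-reassembly step. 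The price is that you need uniqueness over the product base $(\M\times\R)|_{U\times J}$, but that costs nothing. After the flip $\sigma$, $(U\times J)\times I$ is a flow domain on $U\times J$. Lemma~\ref{lem_thetathetaprimesameequationsoffsetgeneral} is stated for an arbitrary source graded manifold, so the chart-patching you flag as the technical heart is already available and need not be redone.

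\emph{If, nonzero degree.} Your expansion in $\tau_1$ for $k\neq0$ is the mirror image of the paper's expansion in the $\R[-\ell]$ coordinate for $\ell\neq 0$. The paper covers the remaining case by the symmetry $\sigma^{-1}=\sigma$, as you do by swapping roles.
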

\begin{proof}
    The proof this theorem is in Appendix \ref{subsec_proofcommuting1} and in Appendix \ref{subsec_proofcommuting2}.
\end{proof}
\section{Flows of related vector fields} \label{sec_related}
Suppose that $\phi: \M \rightarrow \cN$ is any given graded smooth map. Let $X \in \X_{\M}(M)$ and $Y \in \X_{\cN}(N)$ be two vector fields of the same degree $k \in \Z$. Let $\theta: \D \rightarrow \M$ and $\theta': \D' \rightarrow \cN$ be their respective flows. We can ask if there is some natural relation of flows of vector fields, if they are $\phi$-related. Let us start by the following simple observation.
\begin{lemma} \label{lem_underlyingrelated}
    Let $k = 0$. Suppose that $X \sim_{\phi} Y$. Then $X_{0} \sim_{\ul{\phi}} Y_{0}$. 
\end{lemma}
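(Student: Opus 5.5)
The plan is to check the relation $X_{0} \sim_{\ul{\phi}} Y_{0}$ on functions of the form $\ul{g}$. We must show
\begin{equation*}
X_{0} \circ \ul{\phi}^{\ast} = \ul{\phi}^{\ast} \circ Y_{0}
\end{equation*}
as maps $\C^{\infty}_{N}(N) \to \C^{\infty}_{M}(M)$. Since the body map $g \mapsto \ul{g}$ is surjective for $\cN$ (this was noted in Section \ref{sec_notation}), it suffices to test both sides on $\ul{g}$ with $g \in \C^{\infty}_{\cN}(N)$.

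The key ingredient is compatibility of pullbacks with bodies: $\ul{\phi^{\ast}(g)} = \ul{\phi}^{\ast}(\ul{g})$. This is the statement that the square
\begin{equation*}
\phi \circ i_{M} = i_{N} \circ \ul{\phi}
\end{equation*}
commutes, i.e. that the body construction is functorial. I would verify it locally in coordinates. The pullback $\phi^{\ast}$ is a degree-preserving algebra morphism, so it sends degree-nonzero coordinates to functions of nonzero degree, which have trivial body. The degree-zero coordinates $x^{i}$ are sent to functions whose bodies are $\ul{\phi}^{\ast}(x^{i})$. The body map is an algebra morphism killing the ideal generated by nonzero-degree elements, so $\ul{\phi^{\ast}(g)}$ is obtained by substituting $\ul{\phi}$ into the body of $g$. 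Some care is needed because $g$ is a formal power series with smooth coefficients and not a polynomial. To handle this, I would instead argue sheaf-theoretically: $i_{N} \circ \ul{\phi}$ and $\phi \circ i_{M}$ are two graded smooth maps $M \to \cN$ from an ordinary manifold. Such maps are determined by their underlying map, because the target of the pullback has only degree-zero functions with no nilpotents and graded smooth maps are determined by pullbacks of coordinates. Both maps have underlying map $\ul{\phi}$, hence they agree.

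With this in hand, the computation is short, using Lemma \ref{lem_underlying} twice together with $X \circ \phi^{\ast} = \phi^{\ast} \circ Y$:
\begin{equation*}
X_{0}(\ul{\phi}^{\ast}(\ul{g})) = X_{0}(\ul{\phi^{\ast}(g)}) = \ul{X(\phi^{\ast}(g))} = \ul{\phi^{\ast}(Y(g))} = \ul{\phi}^{\ast}(\ul{Y(g)}) = \ul{\phi}^{\ast}(Y_{0}(\ul{g})).
\end{equation*}
Equivalently, in the language of related vector fields, $X_{0} \sim_{i_{M}} X \sim_{\phi} Y$ gives $X_{0} \sim_{\phi \circ i_{M}} Y$. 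Writing $\phi \circ i_{M} = i_{N} \circ \ul{\phi}$ and using that $i_{N}^{\ast}$ is surjective and $Y_{0} \sim_{i_{N}} Y$ then yields $X_{0} \sim_{\ul{\phi}} Y_{0}$.

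The only step requiring genuine justification is the identity $\ul{\phi^{\ast}(g)} = \ul{\phi}^{\ast}(\ul{g})$. I expect this to be a standard fact from \cite{Vysoky:2022gm}, namely that the body of a graded smooth map is the underlying map. Everything else is formal.
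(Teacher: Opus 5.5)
Your proposal is correct and is essentially the paper's proof: apply the body map to $X(\phi^{\ast}(g)) = \phi^{\ast}(Y(g))$, use (\ref{eq_underlying}) on both sides, and conclude via surjectivity of $g \mapsto \ul{g}$. The identity $\ul{\phi^{\ast}(g)} = \ul{\phi}^{\ast}(\ul{g})$ that you single out is also used tacitly by the paper, and it is standard, following from locality of the stalk maps of $\phi$; your ``maps $M \to \cN$ are determined by their underlying map'' justification does not establish it on its own, since it already needs $\psi^{\ast}(x^{i}) = x^{i} \circ \ul{\psi}$ on degree-zero coordinates (the same identity), so citing the standard fact is the right move.
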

\begin{proof}
    Write what $X \sim_{\phi} Y$ means when applied on a function on $\cN$, apply the body map on both sides and use the definition (\ref{eq_underlying}). 
\end{proof}

\begin{lemma} \label{lem_underlyingequivariant}
    Let $k = 0$. Suppose that $X \sim_{\phi} Y$. Then $(\ul{\phi} \times \1_{\R})(D) \subseteq D'$ and 
    \begin{equation} \label{eq_flowsrelatedunderlying}
        \ul{\phi}( \theta_{0}(m,t)) = \theta'_{0}( \ul{\phi}(m), t),
    \end{equation}
    for all $(m,t) \in D$. $\theta_{0}$ and $\theta'_{0}$ denote the flows of $X_{0}$ and $Y_{0}$, respectively. 
\end{lemma}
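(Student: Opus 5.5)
This is purely a statement about ordinary manifolds, so I would first apply Lemma \ref{lem_underlyingrelated} to get $X_{0} \sim_{\ul{\phi}} Y_{0}$. After that, only the classical theory of integral curves is needed. The key tool is the maximal-integral-curve property (p1) of Theorem \ref{thm_fundamentalordinary}.

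Fix $m \in M$ and consider the curve $\gamma := \ul{\phi} \circ \theta_{0}^{(m)} : D^{(m)} \rightarrow N$. By (p1), $\theta_{0}^{(m)}$ is an integral curve of $X_{0}$. Relatedness $X_{0} \sim_{\ul{\phi}} Y_{0}$ means $T\ul{\phi}(X_{0}|_{p}) = Y_{0}|_{\ul{\phi}(p)}$ for all $p \in M$. Combining these,
\begin{equation}
\dot{\gamma}(t) = T\ul{\phi}\big(\dot{\theta}_{0}^{(m)}(t)\big) = T\ul{\phi}\big(X_{0}|_{\theta_{0}(m,t)}\big) = Y_{0}|_{\gamma(t)},
\end{equation}
and clearly $\gamma(0) = \ul{\phi}(m)$. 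So $\gamma$ is an integral curve of $Y_{0}$ starting at $\ul{\phi}(m)$, defined on the open interval $D^{(m)}$ containing $0$.

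The one point needing care is translating the vector field relation $(X_{0} \circ \ul{\phi}^{\ast} = \ul{\phi}^{\ast} \circ Y_{0})$ into the pointwise pushforward statement. This is standard: apply both sides to a function $g$ and evaluate at $p$.

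Next I use maximality. By (p1) for $Y_{0}$, $\theta'^{(\ul{\phi}(m))}_{0}$ is the maximal integral curve of $Y_{0}$ through $\ul{\phi}(m)$. By uniqueness of integral curves (standard ODE uniqueness, see Theorem 9.12 in \cite{lee2012introduction}), any integral curve with the same initial point is a restriction of the maximal one. Hence $D^{(m)} \subseteq D'^{(\ul{\phi}(m))}$, and $\gamma(t) = \theta'_{0}(\ul{\phi}(m),t)$ for all $t \in D^{(m)}$.

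The inclusion says exactly that $(m,t) \in D$ implies $(\ul{\phi}(m),t) \in D'$, i.e. $(\ul{\phi} \times \1_{\R})(D) \subseteq D'$. The equality is (\ref{eq_flowsrelatedunderlying}). Since $m$ was arbitrary, this finishes the argument; there is no real obstacle beyond invoking maximality correctly.
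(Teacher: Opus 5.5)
Your proposal is correct and follows the paper's route: reduce via Lemma \ref{lem_underlyingrelated} to $X_{0} \sim_{\ul{\phi}} Y_{0}$, then apply the classical statement about flows of related vector fields, which the paper simply cites as well known and which you prove via naturality and maximality of integral curves. The one point you leave implicit is that $D$ and $D'$ are the maximal flow domains of $X_{0}$ and $Y_{0}$; this holds by the last clause of Theorem \ref{thm_funamental_extraordinary}.
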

\begin{proof}
    Since $X_{0}$ and $Y_{0}$ are $\ul{\phi}$-related by Lemma \ref{lem_underlyingrelated}, this is a well-known standard statement. 
\end{proof}
It turns out that related vector fields have related flows and vice versa.
\begin{tvrz}
    Let $X \in \X_{\M}(M)$ and $Y \in \X_{\cN}(N)$ be two vector fields of degree $k \in \Z$. Let $\phi: \M \rightarrow \cN$ be a graded smooth map. Let $\theta: \D \rightarrow \M$ and $\theta': \D' \rightarrow \cN$ be their respective flows.

    Then $X \sim_{\phi} Y$, if and only if the following diagram commutes:
    \begin{equation} \label{eq_phiequivariant}
        \begin{tikzcd}
            \D \arrow{r}{\phi \times \1_{\R[-k]}} \arrow{d}{\theta} &[2em] \D' \arrow{d}{\theta'} \\
            \M \arrow{r}{\phi} & \cN
        \end{tikzcd}.
    \end{equation}  
    In other words, if we view $\theta$ and $\theta'$ as (kind of) actions of $(\R[-k],+)$, two vector fields are $\phi$-related, iff $\phi$ is $\R[-k]$-equivariant with respect to their flows.   
\end{tvrz}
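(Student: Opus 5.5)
The easy direction is the converse: assume the diagram (\ref{eq_phiequivariant}) commutes. First I would check that $\phi \times \1_{\R[-k]}$ maps $\D$ into $\D'$. For $k=0$ this is Lemma \ref{lem_underlyingequivariant}, or it is simply part of the hypothesis. For $k \neq 0$ it is automatic. In pullbacks the diagram reads $\theta^{\ast} \circ \phi^{\ast} = (\phi \times \1)^{\ast} \circ \theta'^{\ast}$. Apply $1 \otimes \partial_{\bbt}$ to both sides and use the lift relation $1 \otimes \partial_{\bbt} \sim_{\phi \times \1} 1 \otimes \partial_{\bbt}$, which holds because this field is related to $0$ and $\partial_{\bbt}$ under the projections. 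Together with $1 \otimes \partial_{\bbt} \sim_{\theta} X$ and $1 \otimes \partial_{\bbt} \sim_{\theta'} Y$, this gives
\[
\theta^{\ast} \circ X \circ \phi^{\ast} = (\phi\times \1)^{\ast} \circ \theta'^{\ast} \circ Y .
\]
Next pull back along the zero section $\ds_0$ of $\D$. Since $(\phi \times \1) \circ \ds_0 = \ds_0' \circ \phi$, the first diagram in (\ref{eq_flowdiagrams}) for both flows yields $X \circ \phi^{\ast} = \phi^{\ast} \circ Y$, that is, $X \sim_{\phi} Y$.

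For the forward direction, assume $X \sim_{\phi} Y$. For $k = 0$, Lemma \ref{lem_underlyingequivariant} shows that $(\ul{\phi}\times\1_{\R})(D) \subseteq D'$, so both compositions $\psi_1 := \phi \circ \theta$ and $\psi_2 := \theta' \circ (\phi \times \1)$ are graded smooth maps $\D \to \cN$ with the same underlying map. I would then show that both satisfy the same initial value problem, namely
\[
(1 \otimes \partial_{\bbt}) \circ \psi_i^{\ast} = \psi_i^{\ast} \circ Y, \qquad \ds_0^{\ast} \circ \psi_i^{\ast} = \phi^{\ast}.
\]
For $\psi_2$ this follows from $1\otimes\partial_{\bbt} \sim_{\phi\times\1} 1\otimes \partial_{\bbt} \sim_{\theta'} Y$. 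For $\psi_1$ it follows from $1\otimes\partial_{\bbt}\sim_\theta X\sim_\phi Y$. The initial conditions come from the first diagram in (\ref{eq_flowdiagrams}).

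The main obstacle is the uniqueness statement: two graded smooth maps $\D \to \cN$ solving this ``flow ODE'' with the same initial data must coincide. I would reuse the uniqueness part of the proof of Theorem \ref{thm_funamental_extraordinary} in Appendix \ref{app_proof0}, applied to maps with a general source rather than just to $\theta$. Concretely, the argument works locally in coordinates $\{\bbz^\lambda\}$ on $\cN$ over each fibre interval $D^{(m)}$, which is connected and contains $0$. The functions $\psi_i^{\ast}(\bbz^\lambda)$, expanded in the graded coordinates of $\M$, then satisfy a system of linear-in-higher-order ODEs in $t$, triangular with respect to polynomial degree. Induction on the order of the expansion, together with the standard ODE uniqueness on intervals, gives equality. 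A connectedness argument along $t$ handles the patching of coordinate charts along the underlying curves, which agree.

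For $k \neq 0$, the domain is $\M \times \R[-k]$ and the pullbacks are power series (polynomials, if $\tau$ is odd) in $\tau$. The same relations determine all $\tau$-coefficients recursively from the $\tau^0$ coefficient, which is $\phi^{\ast}$. This is the combinatorics of Appendix \ref{app_proofneq0}, so $\psi_1 = \psi_2$ follows there as well.
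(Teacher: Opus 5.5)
Your proposal is correct and follows the paper's proof essentially step by step. The converse goes by applying $1 \otimes \partial_{\bbt}$ and pulling back along $\ds_{0}$. The forward $k=0$ case shows that $\phi \circ \theta$ and $\theta' \circ (\phi \times \1_{\R})$ both relate $1 \otimes \partial_{t}$ to $Y$, share the underlying map (Lemma \ref{lem_underlyingequivariant}) and the initial condition $\phi$, and then uses exactly the general-source uniqueness result the paper isolates as Lemma \ref{lem_thetathetaprimesameequationsoffsetgeneral}. The $k \neq 0$ case compares $\ds_{0}^{\ast} \circ (1 \otimes \partial_{\tau})^{r}$ of both pullbacks for all $r$.

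The one slip is that Lemma \ref{lem_underlyingequivariant} cannot be cited in the converse direction, since it presupposes $X \sim_{\phi} Y$. As you note, $(\ul{\phi} \times \1_{\R})(D) \subseteq D'$ is instead part of the hypothesis that the diagram is defined.
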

\begin{proof}
    Let us only prove the $k = 0$ case. Suppose that $X \sim_{\phi} Y$. First, observe that
    \begin{equation}
        (1 \otimes \partial_{t}) \circ (\phi \times \1_{\R})^{\ast} \circ \theta'^{\ast} = (\phi \times \1_{\R})^{\ast} \circ (1 \otimes \partial_{t}) \circ \theta'^{\ast} = (\phi \times \1_{\R})^{\ast} \circ \theta'^{\ast} \circ Y.
    \end{equation}
    In other words, $1 \otimes \partial_{t}$ is $(\theta' \circ (\phi \times \1_{\R}))$-related to $Y$. Similarly, one has 
    \begin{equation}
        (1 \otimes \partial_{t}) \circ (\theta^{\ast} \circ \phi^{\ast}) = \theta^{\ast} \circ X \circ \phi^{\ast} = (\theta^{\ast} \circ \phi^{\ast}) \circ Y.
    \end{equation}
    Hence $1 \otimes \partial_{t}$ is also $(\phi \circ \theta)$-related to $Y$. Moreover, observe that $\theta' \circ (\phi \times \1_{\R})$ and $\phi \circ \theta$ have the same underlying maps by Lemma \ref{lem_underlyingequivariant} and their composition with $\ds_{0}: \M \rightarrow \D$ gives $\phi$. Hence by Lemma \ref{lem_thetathetaprimesameequationsoffsetgeneral}, they are equal. Hence (\ref{eq_phiequivariant}) commutes. To prove the converse, write the commutativity of (\ref{eq_phiequivariant}) in terms of pullbacks, act on both sides with $1 \otimes \partial_{t}$ and pull back by the zero section $\ds_{0}$. 

    The proof of the $k \neq 0$ case is the same, except one acts on everything by $\ds_{0}^{\ast} \circ (1 \otimes \partial_{\tau})^{r}$ and compares the results for all $r \in \N_{0}$, see the discussion above (\ref{eq_even_component_functions}). 
\end{proof}
\printbibliography
\appendix
\section{Proof of theorem: degree zero} \label{app_proof0}
This is a section dedicated to the proof of Theorem \ref{thm_funamental_extraordinary} for a degree zero vector field $X \in \X_{\M}(M)$. This is by far the most complicated case. It is also the most interesting, since the flows of degree zero vector fields are the only ones that generate (possibly local) graded diffeomorphisms of $\M$. We will divide the proof into the several steps:
\begin{enumerate}[(1)]
    \item Set up a necessary notation to work with functions on graded domains.
    \item Show that there is a unique solution to a certain system of differential equations for maps between graded domains, with certain initial conditions. 
    \item Use this local statement to construct a graded smooth map $\theta: \D \rightarrow \M$ on some flow domain $\D$ of degree zero, satisfying the equations
    \begin{equation} \label{eq_thetaequationmainfundamental}
        (1 \otimes \partial_{t}) \circ \theta^{\ast} = \theta^{\ast} \circ X, \; \; \theta \circ (\1_{\M},0_{\M}) = \1_{\M}.
    \end{equation}
    
    \item Show that if $\theta': \D' \rightarrow M$ is other such map, then $\theta|_{D \cap D'} = \theta'|_{D \cap D'}$.
    \item Show that $\theta$ can be defined on a union of all flow domains where the solution of (\ref{eq_thetaequationmainfundamental}) exists. This union is in fact equal to the maximal flow domain $D_{0}$ of the underlying vector field $X_{0}$.
    \item Argue that $\theta$ is indeed a flow, that is it fits into the second diagram in (\ref{eq_flowdiagrams}).
\end{enumerate}

\subsection{Setting the notation}\label{subsection_setting_the_notation}
Let $(n_{j})_{j \in \Z}$ be a sequence of non-negative integers with $\sum_{j \in \Z} n_{j} < \infty$. Suppose $U \in \Op(\R^{n_{0}})$ and let $U^{(n_{j})}$ be a graded domain with a structure sheaf $\C^{\infty}_{(n_{j})}$. Let $( x^{i})_{i=1}^{n_{0}}$ denote both the coordinates on $U$ and degree zero coordinates on $U^{(n_{j})}$, and let $(\xi^{\mu})_{\mu=1}^{n_{\ast}}$ be coordinates of a non-zero degree on $U^{(n_{j})}$. For each $k \in \Z$, let us define the following subset:
\begin{equation}
    \ol{\N}{}^{n_{\ast}}_{k} := \{ \fp = (p_{1}, \dots, p_{n_{\ast}}) \in (\N_{0})^{n_{\ast}} \mid p_{\mu} |\xi^{\mu}| = k \text{ and } p_{\mu} \in \{0,1 \} \text{ for odd } |\xi^{\mu}|\}.
\end{equation}
This is precisely the set of $n_{\ast}$-indices ensuring that the product $\xi^{\fp} := (\xi^{1})^{p_{1}} \cdots (\xi^{n_{\ast}})^{p_{n_{\ast}}}$ is a nonzero function on $U^{(n_j)}$ of degree $k$. We define $w(\fp) := \sum_{\mu=1}^{n_{\ast}} p_{\mu}$ to be the overall \textit{weight} of $\fp$. Let
\begin{equation} \label{eq_fnula}
    \fnula := (0, \dots, 0),
\end{equation}
denote the $n_{\ast}$-index consisting solely of zeros. Note that $\fnula \in \ol{\N}{}^{n_{\ast}}_{0}$ and $w(\fnula) = 0$. Moreover, for each $\mu \in \{1,\dots,n_{\ast} \}$, let us henceforth write $|\mu| := |\xi^{\mu}|$ and define $\fp(\mu) \in \ol{\N}{}^{n_{\ast}}_{|\mu|}$ by $[\fp(\mu)]_{\nu} = \delta_{\mu \nu}$. This is the $n_{\ast}$-index consisting of zeros, except for $1$ on the $\mu$-th place. Note that $w(\fp(\mu)) = 1$.

A general function $f \in \C^{\infty}_{(n_{j})}(U)$ of degree $|f|$ is then a formal power series
\begin{equation} \label{eq_fasformalpowerseries}
    f = \sum_{\fp \in \ol{\N}{}^{n_{\ast}}_{|f|}} f_{\fp} \xi^{\fp}, 
\end{equation}
where $f_{\fp} = f_{\fp}(x^{1},\dots,x^{n_{0}})$ are ordinary smooth functions on $U$. Next, let us also use the symbol $\ul{\N}^{n_{0}}$ for the set of all $n_{0}$-indices valued in non-negative integers. We write $\fI = (I_{1},\dots,I_{n_{0}}) \in \ul{\N}^{n_{0}}$ and define $w(\fI) := \sum_{i=1}^{n_{0}} I_{i}$. 

Next, suppose $X \in \X_{(n_{j})}(U)$ is a degree zero vector field on $U^{(n_{j})}$. We can decompose it with respect to the coordinate vector fields as 
\begin{equation}
    X = X^{i} \partial_{i} + X^{\mu} \partial_{\mu},
\end{equation}
where we will always write $\partial_{i} := \frac{\partial}{\partial x^{i}}$ and $\partial_{\mu} = \frac{\partial}{\partial \xi^{\mu}}$. Note that $|X^{i}| = 0$ and $|X^{\mu}| = |\mu|$. We can write the component functions as formal power series
\begin{equation}
    X^{i} = \sum_{\fp \in \ol{\N}{}^{n_{\ast}}_{0}} X^{i}_{\fp} \xi^{\fp}, \; \; X^{\mu} = \sum_{\fp \in \ol{\N}{}^{n_{\ast}}_{|\mu|}} X^{\mu}_{\fp} \xi^{\fp}.
\end{equation}
for each $i \in \{1,\dots,n_{0}\}$ and $\mu \in \{1,\dots,n_{\ast}\}$. 

Finally, suppose we are given $U,V \subseteq \R^{n_{0}}$ and an open interval $I$ containing a given point $t_{0} \in \R$. Let $\theta_{0}: V \times I \rightarrow U$ be a given smooth map. Let us now parametrize a general graded smooth map $\theta: V^{(n_{j})} \times I \rightarrow U^{(n_{j})}$ satisfying $\ul{\theta} = \theta_{0}$. Every graded smooth map into a graded domain is fully determined by pullbacks of coordinate functions. Let us write those as
\begin{align}
\theta^{\ast}(x^{i}) =: & \ x^{i}_{\ast} \equiv \sum_{\fp \in \ol{\N}{}^{n_{\ast}}_{0}} \theta^{i}_{\fp} \xi^{\fp}, \\
\theta^{\ast}(\xi^{\mu})=: & \ \xi^{\mu}_{\ast}  \equiv \sum_{\fp' \in \ol{\N}{}^{n_{\ast}}_{|\mu|}} \theta^{\mu}_{\fp'}\xi^{\fp'}, 
\end{align}
where on the right hand side, we view $\xi$'s as coordinates on the product $V^{(n_{j})} \times I$, and $\theta^{i}_{\fp} = \theta^{i}_{\fp}(x,t)$ and $\theta^{\mu}_{\fp'} = \theta^{\mu}_{\fp'}(x,t)$ are ordinary smooth functions on $V \times I$. The assumption $\ul{\theta} = \theta_{0}$ means that 
\begin{equation}
    \theta^{i}_{\fnula} = x^{i} \circ \theta_{0},
\end{equation}
for each $i \in \{1, \dots, n_{0}\}$. Let us also write $\ol{x}{}^{i}_{\ast}$ for the ``purely graded'' part of $x^{i}_{\ast}$, that is 
\begin{equation}
    \ol{x}{}^{i}_{\ast} = x^{i}_{\ast} - x^{i} \circ \theta_{0}. 
\end{equation}
Finally, suppose that $f \in \C^{\infty}_{(n_{j})}(U)$ is a general function on $U^{(n_{j})}$. How does the pullback $\theta^{\ast}(f)$ look explicitly? Write $f$ as in (\ref{eq_fasformalpowerseries}). Then one can write 
\begin{equation}
    \theta^{\ast}(f) = \sum_{\fp \in \ol{\N}{}^{n_{\ast}}_{|f|}} [\theta^{\ast}(f)]_{\fp} \xi^{\fp}.
\end{equation}
The coefficient functions $[\theta^{\ast}(f)]_{\fp}$ on $V \times I$ are for each $\fp \in \ol{\N}{}^{n_{\ast}}_{|f|}$ given by the formula
\begin{equation} \label{eq_thetastfpthcomponent}
    [\theta^{\ast}(f)]_{\fp} = \sum_{\substack{\fq \in \ol{\N}{}^{n_{\ast}}_{|f|}\\ \fq \leq \fp}} \epsilon_{\fp}^{(\fp - \fq, \fq)} \sum_{\substack{\fr \in \ol{\N}{}^{n_{\ast}}_{|f|}\\ { w(\fr) \leq w(\fq)}}} [\ol{\theta}{}^{\ast}(f_{\fr})]_{\fp - \fq} [\xi^{\fr}_{\ast}]_{\fq},
\end{equation}
where $\epsilon_{\fp}^{(\fp-\fq,\fq)}$ is the sign obtained from the expression $\xi^{\fp - \fq} \cdot \xi^{\fq} = \epsilon_{\fp}^{(\fp-\fq,\fq)} \xi^{\fp}$ and 
\begin{equation} \label{eq_oltheta}
    [\ol{\theta}{}^{\ast}(f_{\fr})]_{\fp - \fq} := \sum_{\substack{\fI \in \ul{\N}^{n_{0}}\\w(\fI) \leq w(\fp - \fq)}} \frac{1}{\fI!} ([\partial_{\fI} f_{\fr}] \circ \theta_{0}) \cdot [\ol{x}^{\fI}_{\ast}]_{\fp - \fq}.
\end{equation}
We use the shorthands $\xi_{\ast}^{\fr} = (\xi^{1}_{\ast})^{r_{1}} \cdots (\xi^{n_{\ast}}_{\ast})^{r_{n_{\ast}}}$, $\ol{x}^{\fI}_{\ast} = (\ol{x}^{1}_{\ast})^{I_{1}} \cdots (\ol{x}^{n_{0}}_{\ast})^{I_{n_{0}}}$, $\partial_{\fI} = (\partial_{1})^{I_{1}} \cdots (\partial_{n_{0}})^{I_{n_{0}}}$ and $\fI! = (i_{1}!) \cdots (i_{n_{0}}!)$. This is a bit brutal, but it is just how the pullback of a general function $f$ can be expressed in terms of its coefficient functions $f_{\fr}$ and pulbacks $\ol{x}^{i}_{\ast}$ and $\xi^{\mu}_{\ast}$ of coordinate functions. For details, see \S 3.2 in \cite{Vysoky:2022gm}.

\subsection{Pivotal system and its solution} \label{subsec_pivotal}
Let $U,V \in \Op(\R^{n_{0}})$ and $I \subseteq \R$ be an open interval containing $t_{0}$. Let $X \in \X_{(n_{j})}(U)$ be a given degree zero vector field on $U^{(n_{j})}$. Let $\phi: V^{(n_{j})} \rightarrow U^{(n_{j})}$ be a given graded smooth map. We can parametrize the map $\phi$ as 
    \begin{equation} \label{eq_phimapparametrization}
        \phi^{\ast}(x^{i}) = \sum_{\fp \in \ol{\N}{}^{n_{\ast}}_{0}} \phi^{i}_{\fp} \xi^{\fp}, \; \; \phi^{\ast}(\xi^{\mu}) = \sum_{\fp' \in \ol{\N}{}^{n_{\ast}}_{|\mu|}} \phi^{\mu}_{\fp'} \xi^{\fp'}.
    \end{equation}
Note that $\phi^{i}_{\fnula} = x^{i} \circ \ul{\phi}$. Let $\theta_{0}: V \times I \rightarrow U$ be a given smooth map satisfying the equations
\begin{equation} \label{eq_theta0assumption}
    (1 \otimes \partial_{t}) \circ \theta_{0}^{\ast} = \theta_{0}^{\ast} \circ X_{0}, \; \; \theta_{0}(x,t_{0}) = \ul{\phi}(x),
\end{equation}
for all $x \in V$. Let $\ds_{t_{0}}: V^{(n_{j})} \rightarrow V^{(n_{j})} \times I$ be the graded smooth map given by
\begin{equation}
\ds_{t_{0}}^{\ast}( x^{i}) := x^{i}, \;\; \ds_{t_{0}}^{\ast}( \xi^{\mu}) := \xi^{\mu}, \; \; \ds_{t_{0}}^{\ast}(t) := t_{0}.
\end{equation}
In other words, it is just a \textbf{section valued at $t_{0}$}. Its underlying smooth map is $\ul{s}_{t_{0}}(x) = (x,t_{0})$. The following statement is the essential computational result pivotal for the main proof. 
\begin{tvrz} \label{tvrz_pivotal}
        There exists a unique graded smooth map $\theta: V^{(n_{j})} \times I \rightarrow U^{(n_{j})}$ satisfying 
    \begin{equation} \label{eq_pivotalODE}
        (1 \otimes \partial_{t}) \circ \theta^{\ast} = \theta^{\ast} \circ X, \; \; \ds_{t_{0}}^{\ast} \circ \theta^{\ast} = \phi^{\ast},
    \end{equation}
    such that $\ul{\theta} = \theta_{0}$. 
\end{tvrz}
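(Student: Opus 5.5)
The plan is to turn the operator equation (\ref{eq_pivotalODE}) into an infinite but \emph{triangular} system of linear ODEs for the coefficient functions $\theta^{i}_{\fp}(x,t)$ and $\theta^{\mu}_{\fp'}(x,t)$ of the parametrization in \S\ref{subsection_setting_the_notation}, and then to solve it by induction on the weight $w(\fp)$.

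\textbf{Step 1: reduce to coordinates.} Both $(1 \otimes \partial_{t}) \circ \theta^{\ast}$ and $\theta^{\ast} \circ X$ are derivations along $\theta$ of degree zero, i.e.\ maps $\C^{\infty}_{(n_{j})}(U) \rightarrow \C^{\infty}_{(n_{j})}(V \times I)$ obeying the Leibniz rule twisted by $\theta^{\ast}$. Such a derivation along a graded smooth map into a graded domain is determined by its values on the coordinates $x^{i}, \xi^{\mu}$. I would invoke the standard fact from \cite{Vysoky:2022gm} (Hadamard-type argument plus compatibility with formal power series) that vector fields along maps have a coordinate expansion. Likewise, $\ds_{t_{0}}^{\ast} \circ \theta^{\ast} = \phi^{\ast}$ holds iff it holds on coordinates. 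Hence (\ref{eq_pivotalODE}) is equivalent to
\begin{equation*}
\partial_{t} x^{i}_{\ast} = \theta^{\ast}(X^{i}), \quad \partial_{t} \xi^{\mu}_{\ast} = \theta^{\ast}(X^{\mu}), \quad x^{i}_{\ast}|_{t=t_{0}} = \phi^{\ast}(x^{i}), \quad \xi^{\mu}_{\ast}|_{t=t_{0}} = \phi^{\ast}(\xi^{\mu}).
\end{equation*}
Here $1 \otimes \partial_{t}$ simply differentiates the coefficients in $t$, because $t$ is a degree-zero coordinate. Taking the $\fp$-th coefficient turns this into ODEs in $t$ for the $\theta^{i}_{\fp}, \theta^{\mu}_{\fp}$, with $x \in V$ as a parameter. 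The condition $\ul{\theta} = \theta_{0}$ fixes $\theta^{i}_{\fnula} = x^{i} \circ \theta_{0}$. The weight-zero equations are precisely (\ref{eq_theta0assumption}), so they hold by assumption. Note that no $\theta^{\mu}_{\fnula}$ exists, since $|\mu| \neq 0$.

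\textbf{Step 2: triangularity.} Fix a weight $w \geq 1$. Using (\ref{eq_thetastfpthcomponent}) and (\ref{eq_oltheta}), I would check that $[\theta^{\ast}(X^{i})]_{\fp}$ and $[\theta^{\ast}(X^{\mu})]_{\fp}$ with $w(\fp) = w$ depend only on $\theta_{0}$, on the coefficients of weight $< w$, and \emph{linearly} on the weight-$w$ coefficients. The weight-$w$ coefficients enter in only two ways.
\begin{itemize}
\item[(a)] Through the $\fI$ with $w(\fI)=1$, $\fr=\fnula$, giving $([\partial_{j} X^{i,\mu}_{\fnula}] \circ \theta_{0})\, \theta^{j}_{\fp}$.
\item[(b)] Through $\fr = \fp(\nu)$ with $\fq = \fp$, giving $(X^{\cdot}_{\fp(\nu)} \circ \theta_{0})\, \theta^{\nu}_{\fp}$.
\end{itemize}
Every other term is a product of coefficients of strictly smaller weight, because each factor of $\ol{x}_{\ast}$ or $\xi_{\ast}$ has no weight-zero part, except for $\theta_{0}$ itself. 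For $X^{i}$, the terms of type (b) vanish, since a degree-zero index never has weight one. Consequently, the finitely many unknowns $u_{w} := (\theta^{i}_{\fp}, \theta^{\mu}_{\fp})_{w(\fp)=w}$ satisfy
\begin{equation*}
\partial_{t} u_{w} = A(x,t)\, u_{w} + b_{w}(x,t), \qquad u_{w}(x,t_{0}) = (\phi^{i}_{\fp}, \phi^{\mu}_{\fp})_{w(\fp)=w}.
\end{equation*}
The matrix $A$ is smooth on $V \times I$, built from $\theta_{0}$ and $X$ only, hence independent of $w$ in its block structure. The inhomogeneity $b_{w}$ is a smooth expression in lower-weight coefficients. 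The sign bookkeeping $\epsilon^{(\fp-\fq,\fq)}_{\fp}$ does not affect this structure.

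\textbf{Step 3: induction.} By the standard theory of linear ODEs with smooth coefficients depending smoothly on parameters, each such system has a unique solution on the \emph{whole} interval $I$, smooth in $(x,t)$. There is no blow-up, because the system is linear. Induction on $w$ then determines all coefficients uniquely. Only finitely many $\fp$ exist at each fixed weight, so every step is a finite-dimensional system. The resulting formal series define pullbacks of coordinates with the correct degrees, hence a unique graded smooth map $\theta$ with $\ul{\theta} = \theta_{0}$. Uniqueness of $\theta$ follows from uniqueness at each stage, together with Step 1.

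I expect the main obstacle to be Step 2, namely extracting from the heavy formula (\ref{eq_thetastfpthcomponent}) a clean statement that the top-weight unknowns enter linearly and only through the two listed terms. I would handle it by a weight-counting lemma: $[\ol{x}^{\fI}_{\ast}]_{\fq}$ and $[\xi^{\fr}_{\ast}]_{\fq}$ involve only coefficients of weight $\leq w(\fq) - w(\fI) + 1$, respectively $\leq w(\fq) - w(\fr) + 1$. Equality is possible only when $w(\fI) = 1$, respectively $w(\fr) = 1$.
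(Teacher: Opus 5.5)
Your proposal is correct and takes essentially the same route as the paper: reduce (\ref{eq_pivotalODE}) to coefficient ODEs on coordinate functions, observe that the top-weight unknowns enter linearly (only through $[\partial_k X^i_{\fnula}]\circ\theta_0$ and $X^\mu_{\fp(\lambda)}\circ\theta_0$), and solve the resulting linear systems globally on $I$ by induction on the weight. The only difference is cosmetic: the paper treats $w=1$ separately and writes the solutions explicitly via the fundamental matrices $\Phi_j$ and variation of constants, whereas you handle all $w\geq 1$ uniformly and cite the theory of linear ODEs with parameters.
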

\begin{proof}
    It suffices to verify (\ref{eq_pivotalODE}) evaluated on the coordinate functions and compare the component functions of the resulting formal power series. This leads to the system of differential equations
    \begin{align} 
        \partial_{t} \theta^{i}_{\fp}(x,t) = & \ [\theta^{\ast}(X^{i})]_{\fp}(x,t), \label{eq_pivotalODE2} \\
        \partial_{t} \theta^{\mu}_{\fp'}(x,t) = & \ [\theta^{\ast}(X^{\mu})]_{\fp'}(x,t). \label{eq_pivotalODE3}
    \end{align}
    for functions on $V \times I$, with the ``initial conditions'' written using the parametrization (\ref{eq_phimapparametrization}) as 
    \begin{equation} \label{eq_pivotalODE4}
        \theta^{i}_{\fp}(x,t_{0}) = \phi^{i}_{\fp}(x), \; \;\theta^{\mu}_{\fp'}(x,t_{0}) = \phi^{\mu}_{\fp'}(x),
    \end{equation}
    for all $i \in \{1,\dots,n_{0}\}$, $\fp \in \ol{\N}{}^{n_{\ast}}_{0}$, $\mu \in \{1,\dots,n_{\ast}\}$ and $\fp' \in \ol{\N}{}^{n_{\ast}}_{|\mu|}$. 

    Let us now prove that this system can be solved inductively on $w \in \N_{0}$, where $w$ is the weight of the $n_{\ast}$-index $\fp$ or $\fp'$ in (\ref{eq_pivotalODE2}, \ref{eq_pivotalODE3}) and (\ref{eq_pivotalODE4}). 

    \begin{enumerate}[(1)]
        \item $w = 0$: This can happen only for $i \in \{ 1, \dots, n_{0}\}$ and $\fp = \fnula$, where (\ref{eq_pivotalODE2}) turns into 
        \begin{equation} \label{eq_ODE5}
            \partial_{t} \theta^{i}_{\fnula}(x,t) = X^{i}_{\fnula}( \theta_{0}(x,t)),
        \end{equation}
        and the (\ref{eq_pivotalODE4}) into $\theta^{i}_{\fnula}(x,t_{0}) = \phi^{i}_{\fnula}(x)$. Since we require $\ul{\theta} = \theta_{0}$, the unique choice is $\theta^{i}_{\fnula} = x^{i} \circ \theta_{0}$ and the equation (\ref{eq_ODE5}) together with the initial condition turns into the assumption (\ref{eq_theta0assumption}) about $\theta_{0}$. This finishes the $w = 0$ step. 

        \item $w = 1$: There are no elements of $\ol{\N}{}^{n_{\ast}}_{0}$ of weight $1$, hence only (\ref{eq_pivotalODE3}) is relevant in this case. For each $j \in \Z\ssm \{0\}$, let us introduce the subset
        \begin{equation}
            P_{j} = \{ \nu \in \{1, \dots, n_{\ast} \} \mid |\nu| = j\}.
        \end{equation}
        Note that for any $\nu \in P_{j}$, one has $\fp(\nu) \in \ol{\N}{}^{n_{\ast}}_{j}$. See under (\ref{eq_fnula}) for the definition. Using this notation, the only non-trivial equations with $w = 1$ coming from (\ref{eq_pivotalODE3}) and for a given $\mu \in \{1, \dots, n_{\ast} \}$ are for $\fp' = \fp(\nu)$ where $\nu \in P_{|\mu|}$. 
        
        We have to analyze the expression (\ref{eq_thetastfpthcomponent}) for $[\theta^{\ast}(X^{\mu})]_{\fp(\nu)}$. Recall that $|X^{\mu}| = |\mu|$, so in the sum over $\fq$, only $\fq = \fp(\nu)$ contributes, and in the sum over $\fr$, we only sum over $\fr = \fp(\lambda)$ with $\lambda \in P_{|\mu|}$. Hence
        \begin{equation}
            [\theta^{\ast}(X^{\mu})]_{\fp(\nu)} = \sum_{\lambda \in P_{|\mu|}} [\ol{\theta}{}^{\ast}(X^{\mu}_{\fp(\lambda)})]_{\fnula} [\xi^{\fp(\lambda)}_{\ast}]_{\fp(\nu)}.
        \end{equation}
        Now, since only $\fI = (0,\dots,0)$ contributes to the sum (\ref{eq_oltheta}) and in this case $[\ol{x}^{\fI}_{\ast}]_{\fnula} = 1$, we find
        \begin{equation}
            [\ol{\theta}^{\ast}(X^{\mu}_{\fp(\lambda)})]_{\fnula} = X^{\mu}_{\fp(\lambda)} \circ \theta_{0}.
        \end{equation}
        Note that $[\xi_{\ast}^{\fp(\lambda)}]_{\fp(\nu)} = \theta^{\lambda}_{\fp(\nu)}$. By plugging into (\ref{eq_pivotalODE3}), for each $\mu \in \{1,\dots,n_{\ast}\}$ and $\nu \in P_{|\mu|}$ we thus find the ordinary differential equation
        \begin{equation} \label{eq_pivotalODE7}
            \partial_{t} \theta^{\mu}_{\fp(\nu)}(x,t) = \sum_{\lambda \in P_{|\mu|}} X^{\mu}_{\fp(\lambda)}(\theta_{0}(x,t)) \theta^{\lambda}_{\fp(\nu)}(x,t).
        \end{equation}
        Now, for each $j \in \Z \ssm \{0\}$ with $n_{j} > 0$, form the $n_{j} \times n_{j}$ matrix $\fA_{j}$ valued in smooth functions on $V \times I$, labeled by $\mu,\lambda \in P_{j}$:
        \begin{equation}
            [\fA_{j}]^{\mu}{}_{\lambda} = X^{\mu}_{\fp(\lambda)} \circ \theta_{0}. 
        \end{equation}
        For every $j \in \Z \ssm \{0\}$ with $n_{j} > 0$, we can thus form the fundamental solution $\Phi_{j}$ of the system of ordinary differential equations given by $\fA_{j}$, that is the $n_{j} \times n_{j}$ matrix of smooth functions on $V \times I$ satisfying 
        \begin{equation} \label{eq_fundamentalPhij}
            \partial_{t} \Phi_{j}(x,t) = \fA_{j}(x,t) \cdot \Phi_{j}(x,t), \; \; \Phi_{j}(x,t_{0}) = \f1_{n_{j}}.
        \end{equation}
        The matrix $\Phi_{j}$ always exists, it is unique and $\Phi_{j}(x,t)$ is invertible for all $(x,t) \in V \times I$. This is a standard well-known statement, see e.g. \cite{coddington1956theory}. The initial conditions for the functions $\theta^{\mu}_{\fp(\nu)}$ take the form
        \begin{equation} \label{eq_pivotalODE8}
            \theta^{\mu}_{\fp(\nu)}(x,t_{0}) = \phi_{\fp(\nu)}^{\mu}(x),
        \end{equation}
        for all $x \in V$. For a fixed $\mu \in \{1, \dots, n_{\ast} \}$ and $\nu \in P_{|\mu|}$, the unique solution to (\ref{eq_pivotalODE7}) and (\ref{eq_pivotalODE8}) can be now obtained from the matrices $\Phi_{j}$ by 
        \begin{equation}
            \theta^{\mu}_{\fp(\nu)}(x,t) := \sum_{\lambda \in P_{|\mu|}} [\Phi_{|\mu|}]^{\mu}{}_{\lambda}(x,t) \phi^{\lambda}_{\fp(\nu)}(x),
        \end{equation}
        for all $(x,t) \in V \times I$. We leave the easy verification to the reader. 
        \item $w > 1$: We now assume that we have solutions for the system (\ref{eq_pivotalODE2}, \ref{eq_pivotalODE3}) with the initial conditions (\ref{eq_pivotalODE4}) for all multiindices of weight strictly lower then $w$. 

        Let $i \in \{1, \dots, n_{0} \}$ and $\fp \in \ol{\N}{}^{n_{\ast}}_{0}$, $w(\fp) = w$. The idea is to write the right-hand side of (\ref{eq_pivotalODE2}) as a sum of two terms. The first one will contain only terms depending on $\fp$, the second one will depend only on functions $\theta^{j}_{\fq}$ and $\theta^{\nu}_{\fq'}$ where $w(\fq)$, $w(\fq') < w$. Since we are solving the system inductively on $w$, we can then plug into the second part and obtain a system of ordinary differential equations for functions depending only on $\fp$. Recall that 
        \begin{align}
            [\theta^{\ast}(X^{i})]_{\fp} = & \ \sum_{\substack{\fq \in \ol{\N}{}^{n_{\ast}}_{0}\\ \fq \leq \fp}} \epsilon_{\fp}^{(\fp - \fq, \fq)} \sum_{\substack{\fr \in \ol{\N}{}^{n_{\ast}}_{0}\\ { w(\fr) \leq w(\fq)}}} [\ol{\theta}{}^{\ast}(X^{i}_{\fr})]_{\fp - \fq} [\xi^{\fr}_{\ast}]_{\fq}, \\ \label{eq_equationODE9}
            [\ol{\theta}{}^{\ast}(X^{i}_{\fr})]_{\fp - \fq} = & \  \sum_{\substack{\fI \in \ul{\N}^{n_{0}}\\w(\fI) \leq w(\fp - \fq)}} \frac{1}{\fI!} ([\partial_{\fI} X^{i}_{\fr}] \circ \theta_{0})  [\ol{x}^{\fI}_{\ast}]_{\fp - \fq}.
        \end{align}
        The top weight $w$ multiindices appear possibly only for $\fq \in \{ \fnula, \fp \}$. 

        \begin{enumerate}[(a)] \item First, suppose that $\fq = \fnula$. Then the summand in the second sum is $\fr = \fnula$, so the whole $\fq = \fnula$ term is just
        \begin{equation}
            [\ol{\theta}{}^{\ast}(X^{i}_{\fnula})]_{\fp} = \sum_{\substack{\fI \in \ul{\N}^{n_{0}}\\w(\fI) \leq w}} \frac{1}{\fI!} ([\partial_{\fI} X^{i}_{\fnula}] \circ \theta_{0}) \cdot [\ol{x}^{\fI}_{\ast}]_{\fp}.
        \end{equation}
        But note that $\ol{x}^{\fI}_{\ast}$ is a $w(\fI)$-fold product of formal power series, so $[\ol{x}^{\fI}_{\ast}]_{\fp}$ has the form
        \begin{equation}
            [\ol{x}^{\fI}_{\ast}]_{\fp} = \sum_{\vec{\fm} \in \ol{\N}{}^{(w(\fI))}_{\fp}} \epsilon^{\vec{\fm}}_{\fp} \underbrace{ [\ol{x}^{1}_{\ast}]_{\fm_{1}} \cdots [\ol{x}^{1}_{\ast}]_{\fm_{I_{1}}} }_{I_{1} \text{ terms}}\cdots \underbrace{[\ol{x}{}^{n_{0}}_{\ast}]_{\fm_{I_{1} + \dots + I_{n_{0}-1} + 1}} \cdots [\ol{x}{}^{n_{0}}_{\ast}]_{\fm_{w(\fI)}} }_{I_{n_{0}} \text{ terms}},
        \end{equation}
        where we sum over the following set of $w(\fI)$-tuples of $n_{\ast}$-indices of non-negative integers
        \begin{equation}
            \ol{\N}{}^{(w(\fI))}_{\fp} = \{ \vec{\fm} = (\fm_{1}, \dots, \fm_{w(\fI)}) \mid \fm_{1} + \cdots + \fm_{w(\fI)} = \fp \},
        \end{equation}
        and $\epsilon^{\vec{\fm}}_{\fp} \in \{-1,1\}$ is the unique sign obtained be reordering $\xi^{\fm_{1}} \cdots \xi^{\fm_{w(\fI)}}$ to $\xi^{\fp}$. 
        
        Now, one of the $n_{\ast}$-indices in $\vec{\fm}$ can have a top weight $w$, only if it is equal to $\fp$. Since $[\ol{x}^{j}_{\ast}]_{\fnula} = 0$, this can only happen if $w(\fI) = 1$. In other words, we must have $\fI = \fI(k)$ for some $k \in \{1, \dots, n_{0}\}$, where $(\fI(k))_{j} := \delta_{kj}$.  Since $[\ol{x}^{k}_{\ast}]_{\fp} = \theta^{k}_{\fp}$, we find that 
        \begin{equation}
            [\ol{\theta}{}^{\ast}(X^{i}_{\fnula})]_{\fp} = \sum_{k=1}^{n_{0}} ([\partial_{k} X^{i}_{\fnula}] \circ \theta_{0}) \theta^{k}_{\fp} + \text{lower weight terms}.
        \end{equation}
        \item Second, suppose that $\fq = \fp$. The corresponding summand of (\ref{eq_equationODE9}) simplifies to the sum
        \begin{equation}
           \sum_{\substack{\fr \in \ol{\N}{}^{n_{\ast}}_{0}\\ w(\fr) \leq w}} (X^{i}_{\fr} \circ \theta_{0}) [\xi^{\fr}_{\ast}]_{\fp}.
        \end{equation}
        Using the same argument as in (a), a weight $w$ component of $\theta$ can appear in $[\xi^{\fr}_{\ast}]_{\fp}$ only for $w(\fr) = 1$. But there is no such $\fr$ in $\ol{\N}{}^{n_{\ast}}_{0}$.
        \end{enumerate}
        It follows that (\ref{eq_pivotalODE2}) can be for each $i \in \{1, \dots, n_{0}\}$ and $\fp \in \ol{\N}{}^{n_{\ast}}_{0}$ with $w(\fp) = w$ written as
        \begin{equation} \label{eq_pivotalODE10}
            \partial_{t} \theta^{i}_{\fp}(x,t) = \sum_{k=1}^{n_{0}} ([\partial_{k} X^{i}_{\fnula}] \circ \theta_{0}) \theta^{k}_{\fp}(x,t) + y^{i}_{\fp}(x,t),
        \end{equation}
        where $y^{i}_{\fp}(x,t)$ depend on $\theta^{j}_{\fq}$ and $\theta^{\nu}_{\fq'}$ for $w(\fq), w(\fq') < w$. Since we are solving the system iteratively, we can assume that these are $y^{i}_{\fp}(x,t)$ are \textit{known functions} on $V \times I$. Let $\fA_{0}$ be the $n_{0} \times n_{0}$ matrix of smooth functions on $V \times I$ defined for each $i,k \in \{1,\dots,n_{0}\}$ by
        \begin{equation}
            [\fA_{0}]^{i}{}_{k} = [\partial_{k}X^{i}_{\fnula}] \circ \theta_{0}.
        \end{equation}
        Let $\Phi_{0}$  be the fundamental solution of the system of ordinary differential equations given by $\fA_{0}$, that is the $n_{0} \times n_{0}$ matrix of smooth functions on $V \times I$ satisfying
        \begin{equation}
            \partial_{t} \Phi_{0}(x,t) = \fA_{0}(x,t) \cdot \Phi_{0}(x,t), \; \; \Phi_{0}(x,t_{0}) = \f1_{n_{0}}.
        \end{equation}
        Again, $\Phi_{0}(x,t)$ is invertible for all $(x,t) \in V \times I$. We look for a solution to (\ref{eq_pivotalODE10}) satisfying the initial condition (\ref{eq_pivotalODE4}). We propose it in the form
        \begin{equation}
        \theta^{i}_{\fp}(x,t) := \sum_{k=1}^{n_{0}} [\Phi_{0}(x,t)]^{i}{}_{k} \{ \phi^{k}_{\fp}(x) + c^{k}_{\fp}(x,t) \},
        \end{equation}
        where $c^{k}_{\fp}$ are yet undetermined smooth functions on $V \times I$. By plugging into (\ref{eq_pivotalODE10}) and (\ref{eq_pivotalODE4}), we obtain the conditions
        \begin{equation}
            \partial_{t} c^{k}_{\fp}(x,t) = \sum_{\ell=1}^{n_{0}} y^{\ell}_{\fp}(x,t)  [\Phi_{0}^{-1}(x,t)]^{k}{}_{\ell}, \; \; c^{k}_{\fp}(x,t_{0}) = 0.
        \end{equation}
        This is easily and uniquely solved by smooth functions 
        \begin{equation}
            c^{k}_{\fp}(x,t) := \sum_{\ell=1}^{n_{0}} \int_{t_{0}}^{t} y^{\ell}_{\fp}(x,s)  [\Phi_{0}^{-1}(x,s)]^{k}{}_{\ell} \; \dr{s}. 
        \end{equation}
        We conclude that for any $i \in \{1, \dots, n_{0} \}$ and any $\fp \in \ol{\N}{}^{n_{\ast}}_{0}$ with $w(\fp) = w$, we are able to find unique smooth functions $\theta^{i}_{\fp}$ on $V \times I$ satisfying (\ref{eq_pivotalODE2}) together with the initial conditions (\ref{eq_pivotalODE4}). 

        Next, for any $\mu \in \{1, \dots, n_{\ast} \}$ and any $\fp' \in \ol{\N}{}^{n_{\ast}}_{|\mu|}$ with $w(\fp') = w$, one can follow the same line of arguments to show that (\ref{eq_pivotalODE3}) can be rewritten as 
        \begin{equation}
            \partial_{t} \theta^{\mu}_{\fp'}(x,t) = \sum_{\lambda \in P_{|\mu|}} (\theta^{\mu}_{\fp(\lambda)} \circ \theta_{0}) \theta^{\lambda}_{\fp'}(x,t) + y^{\mu}_{\fp'}(x,t),
        \end{equation}
        where $y^{\mu}_{\fp^\prime}(x,t)$ depend on $\theta^{j}_{\fq}$ and $\theta^{\nu}_{\fq'}$ for $w(\fq), w(\fq') < w$. We can thus view those as known functions on $V \times I$. It follows that the unique solution satisfying (\ref{eq_pivotalODE4}) can be written as 
        \begin{equation}
            \theta^{\mu}_{\fp'}(x,t) = \sum_{\lambda \in P_{|\mu|}} [\Phi_{|\mu|}(x,t)]^{\mu}{}_{\lambda} \{ \phi^{\lambda}_{\fp'}(x) + c^{\lambda}_{\fp'}(x,t) \},
        \end{equation}
        where $\Phi_{j}$ are the fundamental solutions (\ref{eq_fundamentalPhij}) obtained for each $j \in \Z \ssm \{0\}$ with $n_{j} > 0$, and $c^{\lambda}_{\fp'}(x,t)$ are for each $\lambda \in P_{|\mu|}$ defined by the integral
        \begin{equation}
            c^{\lambda}_{\fp'}(x,t) := \sum_{\nu \in P_{|\mu|}} \int_{t_{0}}^{t} y^{\nu}_{\fp'}(x,s) [\Phi_{|\mu|}^{-1}(x,s)]^{\lambda}{}_{\nu} \; \dr{s}. 
        \end{equation}
        This finishes the iteration step. 
    \end{enumerate}
    It is important that for $w > 0$, we always solve a system of linear ordinary differential equations. Such systems admit global solutions, which is absolutely vital for the iterative argument to be valid. This concludes the proof.
\end{proof}
\begin{rem}
Note that the difficult bit is to find the fundamental solutions $\Phi_{j}$ for any $j \in \Z$ with $n_{j} > 0$. Since $\sum_{j \in \Z} n_{j} < \infty$, there is finitely many of those. 
\end{rem}
\subsection{Existence of the general solution} \label{subsec_genex}
Let $X \in \X_{\M}(M)$ be a degree zero vector field on a general graded manifold $\M$. Let $X_{0} \in \X_{M}(M)$ be its underlying vector field, see Lemma \ref{lem_underlying}. Let $\theta_{0}: D_{0} \rightarrow M$ be the flow of $X_{0}$ obtained by means of Theorem \ref{thm_fundamentalordinary}. In this subsection, we will construct a graded smooth map $\theta: \D \rightarrow \M$ satisfying the correct differential equations, on \textit{some} flow domain $D \subseteq M \times \R$. In view of Proposition \ref{tvrz_underlyingflow}, we must ensure that $\ul{\theta}$ and $\theta_{0}$ agree on the intersection of their domains. We will show that $D$ can be chosen to satisfy $D \subseteq D_{0}$, so we will require $\ul{\theta} = \theta_{0}|_{D}$.  

\begin{lemma} \label{lem_thetalocal}
    Let $(m,t_{0}) \in D_{0}$ be arbitrary. Fix local charts $(V,\psi)$  and $(U,\varphi)$ for $\M$, such that $V \in \Op_{m}(M)$. Fix any open interval $I \in \Op_{t_{0}}(\R)$, such that $V \times I \subseteq D_{0}$ and $\theta_{0}(V \times I) \subseteq U$. Let $\phi: \M|_{V} \rightarrow \M|_{U}$ be any given graded smooth map.

    Then there exists a unique graded smooth map $\theta: \M|_{V} \times I \rightarrow \M|_{U}$ satisfying the equations
    \begin{equation} \label{eq_pivotalODEUVlocal}
        (1 \otimes \partial_{t}) \circ \theta^{\ast} = \theta^{\ast} \circ X|_{U}, \; \; \theta \circ \ds_{t_{0}}^{V} = \phi,
    \end{equation}
    such that $\ul{\theta} = \theta_{0}|_{V \times I}$, where $\ds_{t_{0}}^{V}: \M|_{V} \rightarrow \M|_{V} \times I$ is the section valued at $t_{0} \in I$.
\end{lemma}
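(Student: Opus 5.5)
The plan is to transport the problem through the charts to the setting of Proposition \ref{tvrz_pivotal}, apply that proposition, and transport the unique solution back. The charts give graded diffeomorphisms $\psi: \M|_{V} \rightarrow \psi(V)^{(n_{j})}$ and $\varphi: \M|_{U} \rightarrow \varphi(U)^{(n_{j})}$. Set $V' := \ul{\psi}(V)$ and $U' := \ul{\varphi}(U)$, both open in $\R^{n_{0}}$. On these we introduce the following transported data:
\begin{itemize}
\item the vector field $X' := (\varphi^{-1})^{\ast} \circ X|_{U} \circ \varphi^{\ast}$ on $U'^{(n_{j})}$;
\item the map $\phi' := \varphi \circ \phi \circ \psi^{-1}: V'^{(n_{j})} \rightarrow U'^{(n_{j})}$;
\item the smooth map $\theta'_{0} := \ul{\varphi} \circ \theta_{0} \circ (\ul{\psi}^{-1} \times \1_{I}): V' \times I \rightarrow U'$.
\end{itemize}
The map $\theta'_{0}$ is well defined because $\theta_{0}(V \times I) \subseteq U$.

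Next we check the hypotheses (\ref{eq_theta0assumption}) for $\theta'_{0}$. By Theorem \ref{thm_fundamentalordinary}, $\theta_{0}$ satisfies $\partial_{t}\theta_{0}(m,t) = X_{0}|_{\theta_{0}(m,t)}$. The underlying vector field of $X'$ is the $\ul{\varphi}$-pushforward of $X_{0}$, which follows immediately from Lemma \ref{lem_underlying} and the naturality of the body map under diffeomorphisms. Hence $\theta'_{0}$ satisfies the first equation in (\ref{eq_theta0assumption}) with $X'_{0}$. For the initial condition, $\ul{\phi}$ is a priori arbitrary, while $\theta_{0}(m,t_{0})$ is fixed. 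Applying the body functor to the requirement $\theta \circ \ds^{V}_{t_{0}} = \phi$ shows that $\ul{\phi} = \theta_{0}(\cdot, t_{0})|_{V}$ is necessary. I expect the intended reading includes this compatibility as an implicit hypothesis, and I would state it explicitly. Under it, $\theta'_{0}(x,t_{0}) = \ul{\phi'}(x)$ holds.

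Now Proposition \ref{tvrz_pivotal} yields a unique $\theta': V'^{(n_{j})} \times I \rightarrow U'^{(n_{j})}$ with $(1 \otimes \partial_{t})\circ\theta'^{\ast} = \theta'^{\ast} \circ X'$, with $\ds_{t_{0}}^{\ast}\circ\theta'^{\ast} = \phi'^{\ast}$, and with $\ul{\theta'} = \theta'_{0}$. We define $\theta := \varphi^{-1} \circ \theta' \circ (\psi \times \1_{I})$. The remaining verifications are formal:
\begin{itemize}
\item The vector field $1 \otimes \partial_{t}$ is $(\psi \times \1_{I})$-related to itself, since $\psi \times \1_I$ does not touch the $t$ factor.
\item By construction, $X'$ is $\varphi^{-1}$-related to $X|_{U}$.
\item The section $\ds^{V}_{t_{0}}$ intertwines with the section on $V'$ via $\psi$, since $(\psi \times \1_{I}) \circ \ds^{V}_{t_{0}} = \ds_{t_{0}} \circ \psi$.
\end{itemize}
Chaining these relations gives (\ref{eq_pivotalODEUVlocal}) and $\ul{\theta} = \theta_{0}|_{V \times I}$.

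For uniqueness, take any other solution $\tilde\theta$ and form $\varphi \circ \tilde\theta \circ (\psi^{-1} \times \1_{I})$. This map solves the same local problem, so by the uniqueness in Proposition \ref{tvrz_pivotal} it equals $\theta'$, and therefore $\tilde\theta = \theta$.

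The only genuinely delicate point is the initial-condition compatibility between $\ul{\phi}$ and $\theta_{0}(\cdot,t_{0})$. A secondary point is the claim that the underlying vector field transforms naturally under the chart diffeomorphism. The rest is bookkeeping of related vector fields under graded diffeomorphisms.
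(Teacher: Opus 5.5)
Your proposal is correct and follows the paper's proof: pass to graded domains through the charts and invoke Proposition \ref{tvrz_pivotal} for both existence and uniqueness. You are also right that the compatibility $\ul{\phi} = \theta_{0}(\cdot,t_{0})|_{V}$ is a necessary hypothesis the lemma leaves implicit; it is part of (\ref{eq_theta0assumption}) in Proposition \ref{tvrz_pivotal}, and it holds in every application of the lemma in the paper.
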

\begin{proof}
Use local charts to work with graded domains. Then utilize the results of the previous subsection, namely Proposition \ref{tvrz_pivotal}.
\end{proof}

\begin{tvrz} \label{tvrz_therexistsflow}
    Let $X \in \X_{\M}(M)$ be an arbitrary degree zero vector field. Then there exists a flow domain $\D$ on $\M$ of degree $0$, and a graded smooth map $\theta: \D \rightarrow \M$ satisfying
    \begin{equation} \label{eq_thetaquationalmostfinal}
        (1 \otimes \partial_{t}) \circ \theta^{\ast} = \theta^{\ast} \circ X, \; \; \theta \circ \ds_{0} = \1_{\M},
    \end{equation}
    where $\ds_{0} := (\1_{\M},0_{\M}): \M \rightarrow \D$ is the zero section. One has $D \subseteq D_{0}$, where $D_{0}$ is the flow domain for the flow $\theta_{0}$ of $X_{0}$, and $\ul{\theta} = \theta_{0}|_{D}$.
\end{tvrz}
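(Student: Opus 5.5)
We will build $\theta$ by gluing local solutions from Lemma \ref{lem_thetalocal} around the zero section, arranged so that uniqueness forces them to agree on overlaps.

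\textbf{Local pieces.} For each $m \in M$ choose a chart $(U_m,\varphi_m)$ around $m$. Since $\theta_0(m,0)=m$ and $D_0$ is open, continuity of $\theta_0$ gives a chart neighbourhood $V_m \in \Op_m(M)$ with $V_m \subseteq U_m$ and an interval $I_m = (-\varepsilon_m,\varepsilon_m)$ such that $V_m \times I_m \subseteq D_0$ and $\theta_0(V_m \times I_m) \subseteq U_m$. Apply Lemma \ref{lem_thetalocal} with $t_0 = 0$ and with $\phi$ the inclusion $\M|_{V_m} \to \M|_{U_m}$. This gives a unique $\theta_m : \M|_{V_m} \times I_m \to \M|_{U_m}$ with $(1\otimes\partial_t)\circ\theta_m^\ast = \theta_m^\ast \circ X|_{U_m}$, $\theta_m \circ \ds_0^{V_m} = \1$ and $\ul{\theta_m} = \theta_0|_{V_m\times I_m}$.

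\textbf{Agreement on overlaps.} Fix $m,m'$ and set $W = V_m\cap V_{m'}$, $J = I_m \cap I_{m'}$. Both $\theta_m$ and $\theta_{m'}$ restrict to maps $\M|_W \times J \to \M$ with the same underlying map $\theta_0|_{W\times J}$, the same differential equation, and the same initial condition at $t=0$. To use uniqueness we must land in a single chart. Cover $W$ by small chart neighbourhoods $V'$ of points $p$, with intervals $J'\subseteq J$ around $0$, such that $\theta_0(V'\times J')$ lies in one chart $U'\subseteq U_m\cap U_{m'}$ around $p$; this is possible since $\theta_0(p,0)=p$. On each such $V'\times J'$ both maps corestrict to $\M|_{U'}$, because the underlying image lies in $U'$ and the pullback factors through restriction. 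The uniqueness part of Lemma \ref{lem_thetalocal} then makes them equal there.

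\textbf{Main obstacle.} The difficulty is that the sets $V'\times J'$ need not cover all of $W\times J$: the interval $J'$ may shrink. I would first try to handle this by showing that the set of $t\in J$ where the two maps agree near $(p,t)$ is open and closed in $J$, using uniqueness in Lemma \ref{lem_thetalocal} with initial time $t_0=t$ and initial map $\phi := \theta_m\circ \ds_t$. The difficulty is closedness, which needs agreement of all jet coefficients; these are continuous functions, so agreement passes to the closure. Equality of graded maps can then be checked on coefficient functions.

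If that argument proves delicate, the cheaper route is to define the domain more conservatively. Set $D := \bigcup_{m} V_m\times I_m$, shrinking each $\varepsilon_m$ so that the overlap issue only occurs inside sets of the form $V'\times J'$ above. Since intersections of product sets are products with intervals around $0$, this can be arranged, for instance by choosing a locally finite refinement and taking $\varepsilon$ small. In either case $D$ is open, contained in $D_0$, and each slice $D^{(p)}$ is a union of intervals containing $0$, hence an open interval containing $0$. So $D$ is a flow domain.

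\textbf{Gluing.} The sheaf property lets the compatible $\theta_m$ glue to a graded smooth map $\theta:\D=(\M\times\R)|_D\to\M$, since graded smooth maps glue locally. The equations \eqref{eq_thetaquationalmostfinal} are local, so they hold because they hold on each piece. Finally, $\ul{\theta}=\theta_0|_D$ holds by construction.
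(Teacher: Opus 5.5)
Your skeleton is exactly the paper's: local solutions from Lemma \ref{lem_thetalocal} with $t_0 = 0$ and $\phi$ the inclusion, the flow domain $D := \bigcup_m V_m \times I_m$, and gluing. The problem is the overlap step. What you call the ``main obstacle'' is not an obstacle, and the workarounds you propose for it are the weak part of the proposal.

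You already have $\theta_0(V_m\times I_m)\subseteq U_m$ and $\theta_0(V_{m'}\times I_{m'})\subseteq U_{m'}$. Hence $\theta_0(W\times J)\subseteq U_m\cap U_{m'}$ for $W = V_m\cap V_{m'}$ and $J = I_m\cap I_{m'}$. An open subset of a chart domain is again a chart domain (restrict $\varphi_m$), so you do not need small charts $U'$ around each $p$. Both $\theta_m|_{W\times J}$ and $\theta_{m'}|_{W\times J}$ corestrict to $\M|_{U_m\cap U_{m'}}$. They solve the problem of Lemma \ref{lem_thetalocal} with $U = U_m\cap U_{m'}$, $V = W$, $I = J$, $t_0 = 0$, $\phi = \iota^{U}_{V}$, and the same underlying map $\theta_0|_{W\times J}$. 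Uniqueness in that lemma then gives $\theta_m = \theta_{m'}$ on all of $W\times J$ at once. This is the paper's one-line argument; no covering of $W$, connectedness argument, or shrinking of the domain is needed.

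As written, neither workaround would be acceptable without repair.

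\textbf{Open--closed argument.} Continuity of the coefficient functions only gives agreement at the limit point $(p,t^\ast)$: pointwise, together with all derivatives. Re-applying uniqueness at $t^\ast$ instead requires the initial maps $\theta_m\circ\ds_{t^\ast}$ and $\theta_{m'}\circ\ds_{t^\ast}$ to coincide on a whole neighbourhood of $p$. The correct closedness step restarts uniqueness from a nearby time $t_n$ in your set, using a chart around $\theta_0(p,t^\ast)$. That is essentially the argument of Proposition \ref{tvrz_uniqueness}.

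\textbf{Conservative-domain route.} This is only asserted. A uniform $\varepsilon$ on an overlap would need relatively compact shrinkings of a locally finite cover plus a tube-lemma argument, and you supply none of these.

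Both routes can be repaired, but the direct observation above makes them unnecessary.
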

\begin{proof}
    For each $m \in M$, one can find a local chart $(U,\varphi)$, where $U \in \Op_{m}(M)$. We have $\theta_{0}(m,0) = m$, so there is $V \in \Op_{m}(M)$ and an open interval $I$ containing zero, such that $\theta_{0}(V \times I) \subseteq U$. Note that necessarily $V \subseteq U$, since $\theta_{0}(v,0) = v$ for all $v \in V$. We can thus use $(V,\varphi|_{V})$ as a local chart for $\M$ over $V$.

    In this way, we may find open covers $\{ U_{\alpha} \}_{\alpha \in S}$ and $\{ V_{\alpha} \}_{\alpha \in S}$ of $M$, such that for each $\alpha \in S$, we have $V_{\alpha} \in \Op(U_{\alpha})$ and an open interval $I_{\alpha} \in \Op_{0}(\R)$ satisfying $V_{\alpha} \times I_{\alpha} \subseteq D_{0}$ and $\theta_{0}(V_{\alpha} \times I_{\alpha}) \subseteq U_{\alpha}$. For each $\alpha \in S$, use Lemma \ref{lem_thetalocal} to construct $\theta'_{\alpha}: \M|_{V_{\alpha}} \times I_{\alpha} \rightarrow \M|_{U_{\alpha}}$ satisfying 
    \begin{equation}
        (1 \otimes \partial_{t}) \circ \theta'^{\ast}_{\alpha} = \theta'^{\ast}_{\alpha} \circ X|_{U_{\alpha}}, \; \; \theta'_{\alpha} \circ \ds_{0}^{V_{\alpha}} = \iota_{V_{\alpha}}^{U_{\alpha}},
    \end{equation}
    and $\ul{\theta'_{\alpha}} = \theta_{0}|_{V_{\alpha} \times I_{\alpha}}$, where $\iota_{V_{\alpha}}^{U_{\alpha}}: \M|_{V_{\alpha}} \rightarrow \M|_{U_{\alpha}}$ is the canonical inclusion. Let 
    \begin{equation}
    \theta_{\alpha} = \iota_{\alpha} \circ \theta'_{\alpha}: (\M \times \R)|_{V_{\alpha} \times I_{\alpha}} \rightarrow \M,
    \end{equation}
    where $\iota_{\alpha}: \M|_{U_{\alpha}} \rightarrow \M$ is the inclusion. Now, let us define the open set
    \begin{equation}
        D := \bigcup_{\alpha \in S} V_{\alpha} \times I_{\alpha}.
    \end{equation}
    We claim that $D$ is a flow domain on $M$. We have to check that for each $m \in M$, the set $D^{(m)}$ defined by (\ref{eq_Dmsubsetdomain}) is an open interval containing $0$. Let $S_{m} := \{ \alpha \in S \mid m \in V_{\alpha} \}$. Then 
    \begin{equation}
        D^{(m)} = \bigcup_{\alpha \in S_{m}} I_{\alpha}. 
    \end{equation}
    Since $S_{m} \neq \emptyset$, this is an open interval containing $0$. Hence $D$ is a flow domain on $M$, and $D \subseteq D_{0}$. Since $\{ V_{\alpha} \times I_{\alpha} \}_{\alpha \in S}$ is an open cover of $D$, we can now define $\theta: (\M \times \R)|_{D} \rightarrow \M$ to be the unique map satisfying $\theta|_{V_{\alpha} \times I_{\alpha}} = \theta_{\alpha}$ for each $\alpha \in S$. One only has to check that for each $\alpha,\beta \in S$, the maps $\theta_{\alpha}$ and $\theta_{\beta}$ agree on the intersection 
    \begin{equation}
        (V_{\alpha} \times I_{\alpha}) \cap (V_{\beta} \times I_{\beta}) = (V_{\alpha} \cap V_{\beta}) \times (I_{\alpha} \cap I_{\beta}).
    \end{equation}
    It is easy to see that both restrictions define a graded smooth map satisfying (\ref{eq_pivotalODEUVlocal}) for $U = U_{\alpha} \cap U_{\beta}$, $V = V_{\alpha} \cap V_{\beta}$, $I = I_{\alpha} \cap I_{\beta}$, and $\phi = \iota_{V}^{U}$, hence they must be equal by Lemma \ref{lem_thetalocal}. 

    Finally, to check that $\theta$ satisfies (\ref{eq_thetaquationalmostfinal}), simply apply both sides on a general function $f \in \C^{\infty}_{\M}(M)$ and compare restrictions of both sides to $V_{\alpha} \times I_{\alpha}$ for all $\alpha \in S$, using the definition of $\theta_{\alpha}$. The fact that $\ul{\theta} = \theta_{0}|_{D}$ follows immediately from the proof of Proposition \ref{tvrz_underlyingflow}. 
    \end{proof}

    \subsection{Uniqueness of the solution} \label{subsec_uniqueness}
    \begin{tvrz} \label{tvrz_uniqueness}
    Let $\theta: \D \rightarrow \M$ and $\theta': \D' \rightarrow \M$ both satisfy the equations (\ref{eq_thetaquationalmostfinal}). Then 
    \begin{equation}
        \theta|_{D \cap D'} = \theta'|_{D \cap D'}
    \end{equation}
    \end{tvrz}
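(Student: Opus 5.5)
First I will show that the two maps have the same underlying map. Applying the body functor to (\ref{eq_thetaquationalmostfinal}), exactly as in the proof of Proposition \ref{tvrz_underlyingflow}, both $\ul{\theta}$ and $\ul{\theta'}$ are flows on $M$ with infinitesimal generator $X_{0}$. For each $m \in M$, the curves $t \mapsto \ul{\theta}(m,t)$ and $t \mapsto \ul{\theta'}(m,t)$ are integral curves of $X_{0}$ starting at $m$. Their domain $D^{(m)} \cap D'^{(m)}$ is an open interval containing $0$. By uniqueness of integral curves, $\ul{\theta} = \ul{\theta'}$ on $D \cap D'$, and both coincide with $\theta_{0}$ there. (In the situation of Proposition \ref{tvrz_therexistsflow} this is already known, but the argument does not need it.)

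Next I will prove the claim locally. Let $A \subseteq D \cap D'$ be the set of points $(m,t)$ having an open neighbourhood $W$ with $\theta|_{W} = \theta'|_{W}$. Then $A$ is open, and by the sheaf property the two maps agree on $\M \times \R$ restricted to $A$. It suffices to show that $A = D \cap D'$.

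Fix $m$. The fibre $(D \cap D')^{(m)}$ is an interval $J \ni 0$. I will show that $J_{m} := \{t \in J \mid (m,t) \in A\}$ is nonempty, open and closed in $J$.

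\emph{Nonempty.} Choose charts $(U,\varphi)$ around $m$ and $V \in \Op_{m}(M)$ with $V \subseteq U$, together with an interval $I \ni 0$ such that $V \times I \subseteq D \cap D'$ and $\theta_{0}(V \times I) \subseteq U$. Both restrictions $\theta|_{V \times I}$ and $\theta'|_{V \times I}$ solve (\ref{eq_pivotalODEUVlocal}) with $t_{0} = 0$ and $\phi = \iota^{U}_{V}$, and they have the same underlying map. By Lemma \ref{lem_thetalocal} they coincide.

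\emph{Closed in $J$.} This is the main step. Let $t_{0} \in J$ be a limit of points $t_{n} \in J_{m}$. Choose a chart $U$ around $\theta_{0}(m,t_{0})$, and $V \ni m$ and $I \ni t_{0}$ with $V \times I \subseteq D \cap D'$ and $\theta_{0}(V \times I) \subseteq U$. Pick $t_{1} \in I \cap J_{m}$. Because $A$ is open, after shrinking $V$ we may assume $V \times I_{1} \subseteq A$ for some small interval $I_{1} \ni t_{1}$. Then
\begin{equation*}
\phi := \theta \circ \ds^{V}_{t_{1}} = \theta' \circ \ds^{V}_{t_{1}},
\end{equation*}
since $\ds^{V}_{t_{1}}$ factors through $V \times I_{1}$, where the two maps agree. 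Both $\theta$ and $\theta'$ restricted to $\M|_{V} \times I$ satisfy (\ref{eq_pivotalODEUVlocal}) with initial time $t_{1}$ and initial datum $\phi$, and they have the same body. Lemma \ref{lem_thetalocal} with $t_{1}$ in place of $t_{0}$ gives $\theta = \theta'$ on $V \times I$, so $t_{0} \in J_{m}$.

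\emph{Open.} The same argument applied at any $t_{0} \in J_{m}$ shows that a whole neighbourhood $I$ of $t_{0}$ lies in $J_{m}$.

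Since $J$ is connected, $J_{m} = J$. Doing this for every $m$ gives $A = D \cap D'$.

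The delicate point is the closedness step. There one must check that the section $\ds_{t_{1}}$ lands in the region where the two maps are already known to agree, so that the "initial conditions" match. One must also arrange that the underlying maps restricted to $V \times I$ land in a single chart $U$, which is exactly the hypothesis needed to invoke Lemma \ref{lem_thetalocal}.
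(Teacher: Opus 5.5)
Your proof is correct. Its key ingredient is the same as in the paper: the uniqueness part of Lemma \ref{lem_thetalocal}, applied with the initial time shifted to a point where agreement is already known. The difference is in how you propagate agreement in the time direction.

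The paper fixes $(m,t)$ and uses compactness of $\{m\} \times [0,t]$. This gives a finite subdivision $0 = t_0 < \cdots < t_N = t$, a single chart neighbourhood $V$ of $m$, and overlapping intervals $I_k$ with charts $U_k$ such that $\theta_0(V \times I_k) \subseteq U_k$. It then inducts along this chain, passing the initial condition across each overlap $I_{k-1} \cap I_k$.

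You instead run an open--closed argument on the connected interval $(D \cap D')^{(m)}$. Because agreement is an open condition (your set $A$, together with gluing of graded smooth maps), you may shrink $V$ afresh at each time. So you never need one neighbourhood of $m$ that works uniformly along $[0,t]$, and you avoid the bookkeeping of overlaps. This makes your argument shorter and cleaner for uniqueness, where two maps are already given.

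What the paper's chain construction buys is constructiveness: it produces an explicit product neighbourhood $V \times I$ of $(m,t)$. It is reused almost verbatim where there is no second map to compare with yet, namely to build a solution on a strip in Proposition \ref{tvrz_DmaxisD0}. There a connectedness argument is not directly available.

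Two minor imprecisions:
\begin{enumerate}
\item Taking bodies in (\ref{eq_thetaquationalmostfinal}) does not by itself make $\ul{\theta}$ a flow, since the group law is not among the hypotheses. It only shows that each $t \mapsto \ul{\theta}(m,t)$ is an integral curve of $X_0$ through $m$. That is all you actually use, and it then identifies $\ul{\theta}$ with a restriction of $\theta_0$.
\item In the closedness step you should again take $V$ inside a chart domain, so that Lemma \ref{lem_thetalocal} applies.
\end{enumerate}
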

    \begin{proof}
        Since $D \cap D'$ is always a flow domain, we may without the loss of generality assume that $D = D'$. The underlying map $\ul{\theta}$ is necessarily the restriction of the ordinary flow of $X_{0}$, so $D \subseteq D_{0}$ and $\ul{\theta} = \theta_{0}|_{D}$. In particular, $\ul{\theta} = \ul{\theta}'$. 
        
        For any $(m,t) \in D$, we will find $V \in \Op_{m}(M)$ together with an open interval $I \in \Op_{t}(\R)$, such that $V \times I \subseteq D$, and 
        \begin{equation} \label{eq_thetatheta'VtimesIcomperison}
            \theta|_{V \times I} = \theta'|_{V \times I},
        \end{equation}
        which will show the claim. Let $(m,t) \in D$ be any given point. Since $D^{(m)} \in \Op_{0}(\R)$ is an open interval, we have $(m,s) \in D$ for all $s \in [0,t]$. For each such $(m,s)$, there is $V_{s} \in \Op_{m}(M)$, $U_{s} \in \Op_{\theta_{0}(m,s)}(M)$ and an open interval $I_{s} \in \Op_{s}(\R)$ such that $\theta_{0}(V_{s} \times I_{s}) \subseteq U_{s}$. Moreover, we can assume that we have local charts $(U_{s}, \varphi_{s})$ and $(V_{s}, \psi_{s})$ for $\M$. Since $\{m\} \times [0,t]$ is compact, we can use its open cover $\{ V_{s} \times I_{s} \}_{s \in [0,t]}$ to find a finite subdivision $0 = t_{0} < \cdots < t_{N} = t$ together with the following data:
        \begin{enumerate}[(1)]
            \item Graded local charts $\{ (U_{k},\varphi_{k}) \}_{k=0}^{N}$ for $\M$, where $U_{k} \in \Op_{\theta_{0}(m,t_{k})}(M)$.
            \item A graded local chart $(V,\psi)$ for $\M$, where $V \in \Op_{m}(M)$.
            \item A collection $\{ I_{k} \}_{k=0}^{N}$ of open intervals covering $[0,t]$, such that $I_{k} \in \Op_{t_{k}}(\R)$ and $I_{k+1} \cap I_{k} \neq \emptyset$ for all $k \in \{0,\dots,N-1\}$. 
            \item For each $k \in \{0,\dots,N\}$, one has $\theta_{0}(V \times I_{k}) \subseteq U_{k}$.
        \end{enumerate}
        See the following figure to get the intuition:
        \begin{center}
            \includegraphics{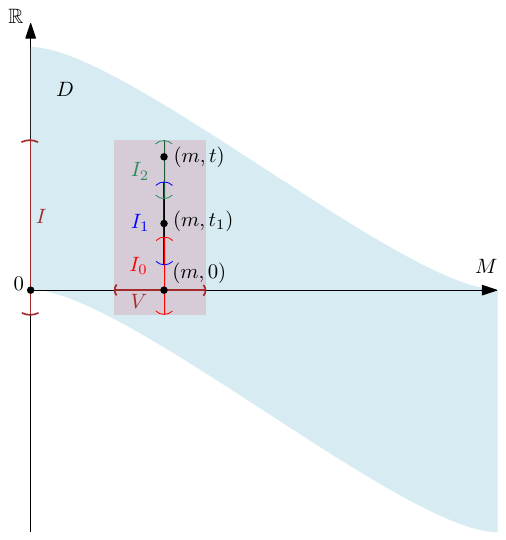}
        \end{center}
        The flow domain $D$ is blue, the rectangle $V \times I$ we are looking for is pink. For each $k \in \{0,\dots,N\}$, let us define $I'_{k} := \bigcup_{j=0}^{k} I_{j}$. Let us prove by induction on $k$ that
        \begin{equation}
            \theta|_{V \times I'_{k}} = \theta'|_{V \times I'_{k}}.
        \end{equation}
        For $k = 0$, the result follows from the uniqueness claim of Lemma \ref{lem_thetalocal}, where we can use the graded local chart $(U_{0},\varphi_{0})$, $V \in \Op_{m}(U)$ and $I_{0} \in \Op_{0}(\R)$, and the assumption that both $\theta$ and $\theta'$ satisfy (\ref{eq_thetaquationalmostfinal}). Next, let $k > 0$. By induction hypothesis, we have $\theta|_{V \times I'_{k-1}} = \theta'|_{V \times I'_{k-1}}$. One must only argue that $\theta|_{V \times I_{k}} = \theta'|_{V \times I_{k}}$. Pick any $t'_{0} \in I_{k-1} \cap I_{k}$. It follows from the induction hypothesis that
        \begin{equation}
            \theta|_{V \times I_{k}} \circ \ds_{t'_{0}}^{V} = \theta|_{V \times I'_{k-1}} \circ \ds^{V}_{t'_{0}} = \theta'|_{V \times I'_{k-1}} \circ \ds^{V}_{t'_{0}}  = \theta'|_{V \times I_{k}} \circ \ds^{V}_{t'_{0}},
        \end{equation}
        where $\ds^{V}_{t'_{0}}$ is the section valued at $t'_{0}$. Since both $\ul{\theta}$ and $\ul{\theta}'$ have to be the restrictions of $\theta_{0}$, and $\theta|_{V \times I_{k}}$ and $\theta'|_{V \times I_{k}}$ both satisfy (\ref{eq_pivotalODEUVlocal}) for the same ``initial condition map'' $\phi: \M|_{V} \rightarrow \M|_{U_k}$, they must be equal by Lemma \ref{lem_thetalocal}. This finishes the induction step.

        If we now consider the open interval $I := I'_{N} \in \Op_{t}(\R)$, we have $V \times I \subseteq D$ such that (\ref{eq_thetatheta'VtimesIcomperison}) holds. As $(m,t) \in D$ was arbitrary, this shows that $\theta = \theta'$. 
    \end{proof}
    \subsection{Finding the maximal flow domain} \label{subsec_maximaldomain}
    By Proposition \ref{tvrz_therexistsflow}, there exists a flow domain $\D = (\M \times \R)|_{D}$ and a graded smooth map $\theta: \D \rightarrow \M$ satisfying the equations (\ref{eq_thetaquationalmostfinal}), with $D \subseteq D_{0}$. Let $D_{\max}$ be the union of all such (underlying) flow domains $D$. 

    \begin{tvrz} \label{tvrz_thereismaximal}
        $D_{\max}$ is a flow domain contained in $D_{0}$. 
        
        Moreover, there exists a unique solution $\theta: (\M \times \R)|_{D_{\max}} \rightarrow \M$ of the equations (\ref{eq_thetaquationalmostfinal}) maximal in the following sense: Suppose $D'$ is any other flow domain and $\theta': (\M \times \R)|_{D'} \rightarrow \M$ satisfying the equations (\ref{eq_thetaquationalmostfinal}). Then $D' \subseteq D_{\max}$ and $\theta' = \theta|_{D'}$. 
    \end{tvrz}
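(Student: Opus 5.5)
The plan is to glue all local solutions together using the sheaf property, with Proposition \ref{tvrz_uniqueness} supplying the compatibility on overlaps.

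\emph{Step 1: $D_{\max}$ is a flow domain inside $D_0$.} Let $\{D_\beta\}_{\beta \in B}$ be the family of all flow domains on $M$ carrying some solution $\theta_\beta: (\M\times\R)|_{D_\beta}\to\M$ of (\ref{eq_thetaquationalmostfinal}). The family is nonempty by Proposition \ref{tvrz_therexistsflow}. Each $D_\beta$ lies in $D_0$, since the underlying map of any such solution is forced to be the restriction of $\theta_0$ (as noted at the start of the proof of Proposition \ref{tvrz_uniqueness}, via Proposition \ref{tvrz_underlyingflow} and the maximality in Theorem \ref{thm_fundamentalordinary}). Hence $D_{\max} = \bigcup_\beta D_\beta \subseteq D_0$, and $D_{\max}$ is open. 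For $m \in M$ we have $D_{\max}^{(m)} = \bigcup_\beta D_\beta^{(m)}$. This is a union of open intervals all containing $0$, so it is again an open interval containing $0$. Thus $D_{\max}$ is a flow domain.

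\emph{Step 2: gluing.} The open sets $D_\beta$ cover $D_{\max}$. By Proposition \ref{tvrz_uniqueness}, any two solutions agree on $D_\beta\cap D_\gamma$. Since graded smooth maps into $\M$ form a sheaf with respect to the domain (pullbacks are sheaf morphisms, and restrictions glue), there is a unique graded smooth map $\theta:(\M\times\R)|_{D_{\max}}\to\M$ with $\theta|_{D_\beta}=\theta_\beta$ for every $\beta$. One checks (\ref{eq_thetaquationalmostfinal}) locally. The relation $(1\otimes\partial_t)\circ\theta^\ast=\theta^\ast\circ X$ can be tested after restriction to each $D_\beta$, where it holds because the vector field $1 \otimes \partial_t$ commutes with restrictions. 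For the initial condition, the zero section $\ds_0$ maps $\M$ into $\bigcup_\beta D_\beta$. Restricting to the open sets $\{m \mid (m,0)\in D_\beta\}$, which cover $M$, gives $\theta\circ\ds_0=\1_\M$ there, and therefore globally.

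\emph{Step 3: maximality and uniqueness.} If $\theta':(\M\times\R)|_{D'}\to\M$ is any solution, then $D'$ belongs to the family, so $D'\subseteq D_{\max}$. Proposition \ref{tvrz_uniqueness}, applied to $\theta$ and $\theta'$, then gives $\theta'=\theta|_{D'}$. Uniqueness of the maximal solution is immediate: two such solutions are defined on the same $D_{\max}$ and agree there by Proposition \ref{tvrz_uniqueness}.

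The only delicate point is the gluing in Step 2. One needs that morphisms of graded manifolds glue along an open cover of the source when they agree on overlaps. This is standard: define $\theta^\ast$ on each $\C^\infty_\M(U)$ by gluing the functions $\theta_\beta^\ast(f)$ in the sheaf $\C^\infty_{\M\times\R}$, and the underlying maps glue to $\theta_0|_{D_{\max}}$. I expect no real obstacle beyond this bookkeeping.
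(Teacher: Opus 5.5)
Your proposal is correct and follows the paper's proof essentially step by step. $D_{\max}$ is a flow domain inside $D_0$ because each $D_{\max}^{(m)}$ is a union of open intervals containing $0$. The solutions then glue via Proposition \ref{tvrz_uniqueness} and the sheaf property of graded smooth maps, and maximality and uniqueness are immediate.

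One minor simplification: every flow domain contains $M \times \{0\}$, so $\ds_0$ already lands in each $D_\beta$. The initial condition $\theta \circ \ds_0 = \1_\M$ therefore follows directly from $\theta|_{D_\beta} = \theta_\beta$, and you do not need the covering argument.
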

    \begin{proof}
        We can write $D_{\max} = \cup_{\alpha \in J} D_{\alpha}$, where for each $\alpha \in J$, we have $\theta_{\alpha}: (\M \times \R)|_{D_{\alpha}} \rightarrow \M$ satisfying (\ref{eq_thetaquationalmostfinal}). Since $D_{\alpha} \subseteq D_{0}$ for all $\alpha \in J$, we have $D_{\max} \subseteq D_{0}$. Moreover, for each $m \in M$, one has $D_{\max}^{(m)} = \cup_{\alpha \in J} D_{\alpha}^{(m)}$. Since any union of intervals containing $0$ is an interval containing $0$, this proves that $D_{\max}$ is a flow domain. 

        Next, $\{ D_{\alpha} \}_{\alpha \in J}$ forms an open cover of $D_{\max}$ and the maps $\theta_{\alpha}$ agree on the overlaps by Proposition \ref{tvrz_uniqueness}. There is thus a unique graded smooth map $\theta: (M \times \R)|_{D_{\max}} \rightarrow \M$ satisfying $\theta|_{D_{\alpha}} = \theta_{\alpha}$ for each $\alpha \in J$. It follows from the construction that $\theta$ also satisfies the equations (\ref{eq_thetaquationalmostfinal}). It also follows immediately that $\theta$ is maximal in the sense described in the statement. 
    \end{proof}

    So far, we know that $D_{\max}$ is a subset of $D_{0}$, the flow domain of the ordinary flow $\theta_{0}$ of the underlying vector field $X_{0}$. It turns out that it is necessarily the same.

    \begin{tvrz} \label{tvrz_DmaxisD0}
        One has $D_{\max} = D_{0}$. 
    \end{tvrz}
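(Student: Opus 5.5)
We show $D_0 \subseteq D_{\max}$; the reverse inclusion is Proposition \ref{tvrz_thereismaximal}. The argument is by contradiction, modelled on the ordinary proof that the maximal flow domain is exhausted by integral curves. The key tool is Lemma \ref{lem_thetalocal}, which solves the flow equation from an \emph{arbitrary} initial time $t_0$ with an \emph{arbitrary} initial map $\phi$. Such local solutions always exist on any rectangle $V \times I \subseteq D_0$ with $\theta_0(V\times I)$ inside a chart. This holds because for positive weight the system is linear, so no blow-up can occur beyond that of the underlying flow $\theta_0$.

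Suppose $(m,t) \in D_0 \setminus D_{\max}$, say with $t>0$ (the case $t<0$ is symmetric). Let $b := \sup D_{\max}^{(m)}$. Then $0 < b \leq t$, and $b \in D_0^{(m)}$ because $D_0^{(m)}$ is an interval containing $0$ and $t$. As in the proof of Proposition \ref{tvrz_therexistsflow}, choose a chart $U \ni \theta_0(m,b)$, a chart neighbourhood $V \ni m$ and an interval $J \ni b$ with $V \times J \subseteq D_0$ and $\theta_0(V \times J) \subseteq U$.

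Next pick $t_0 \in J$ with $t_0 < b$. By definition of $b$ we have $t_0 \in D_{\max}^{(m)}$, so $(m,t_0) \in D_{\max}$. Shrinking $V$ and $J$, we may also assume $V \times [0,t_0] \subseteq D_{\max}$. This is possible by compactness of $\{m\}\times[0,t_0]$ and openness of $D_{\max}$, using a tube-lemma argument as in Proposition \ref{tvrz_uniqueness}.

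Let $\theta$ be the maximal solution on $D_{\max}$. Take $\phi := \theta \circ \ds_{t_0}^{V}$; its body $\theta_0(\cdot,t_0)$ maps $V$ into $U$, so we may shrink $V$ so that $\phi$ lands in $\M|_U$. Lemma \ref{lem_thetalocal} then gives $\tilde\theta : \M|_V \times J \to \M|_U$, which satisfies the differential equation, has initial value $\phi$ at $t_0$, and has body $\theta_0$.

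Now set $D' := D_{\max} \cup (V \times (J \cap (0,\infty)))$ and glue $\theta$ with $\tilde\theta$. The set $D'$ is open. For each $v\in V$, the fibre $D'^{(v)}$ is a union of intervals all meeting $D_{\max}^{(v)} \supseteq [0,t_0]$, so it is again an interval containing $0$; hence $D'$ is a flow domain.

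For the gluing, the two maps must agree on $D_{\max} \cap (V \times J)$. This overlap need not be a rectangle, so this is the main obstacle. Both maps solve the same equation on each connected vertical segment $\{v\}\times(\cdot)$, and they agree at time $t_0$. I would rerun the compactness-and-induction argument of Proposition \ref{tvrz_uniqueness}: around each point of the overlap, cover the time segment joining it to $t_0$ (which lies in the overlap, since both $D_{\max}^{(v)}$ and $J$ are intervals containing $t_0$) by finitely many small rectangles, and propagate equality using the uniqueness clause of Lemma \ref{lem_thetalocal}.

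Finally, the glued map satisfies (\ref{eq_thetaquationalmostfinal}): the differential equation holds locally, and the initial condition $\theta\circ\ds_0 = \1_\M$ is untouched, since $0 \notin J\cap(0,\infty)$ was not newly added. Hence $D' \subseteq D_{\max}$ by maximality. But $D'^{(m)}$ contains points of $J$ greater than $b$, contradicting the definition of $b$.
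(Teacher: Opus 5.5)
Your argument is correct, but it is organized differently from the paper's.

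\textbf{The paper's route.} The paper uses no supremum. Given $(m_0,s_0)\in D_0\setminus D_{\max}$, it builds a solution $\theta_S$ from scratch on a strip $S=V\times I$ with $I\supseteq[0,s_0]$. It starts from the initial condition $\theta_S\circ\ds_0^V=\iota_V$ and iterates Lemma \ref{lem_thetalocal} along a chain of charts covering $\theta_0(\{m_0\}\times[0,s_0])$. Because $S$ contains $V\times\{0\}$, the overlap $D_{\max}\cap S$ is itself a flow domain on $V$. So Proposition \ref{tvrz_uniqueness} applies verbatim and the gluing is immediate.

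\textbf{Your route.} Your continuation argument needs only one chart $U$ around $\theta_0(m,b)$ and a single application of Lemma \ref{lem_thetalocal}. The initial data at $t_0<b$ is read off from the maximal solution itself. The price is that the overlap $D_{\max}\cap(V\times(J\cap(0,\infty)))$ has fibres not containing $0$. So, as you correctly noticed, Proposition \ref{tvrz_uniqueness} does not apply as stated.

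\textbf{The gluing step is easier than you suggest.} Since $\theta_0(V\times J)\subseteq U$, take any $(v,s)$ in the overlap. Any tube $V'\times I'$ contained in the overlap with $I'\supseteq[t_0,s]$ lies over the single chart $U$. Both maps equal $\phi$ at time $t_0$, so the uniqueness clause of Lemma \ref{lem_thetalocal} with initial time $t_0$ gives agreement on that tube in one step. No chain of charts or induction is needed.

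\textbf{Two small points.} First, choose $t_0\in J\cap(0,b)$ explicitly, since you later use $t_0>0$ to get $t_0\in J\cap(0,\infty)$. Second, no shrinking of $V$ is needed to make $\phi$ land in $\M|_U$: $t_0\in J$ already gives $\theta_0(V\times\{t_0\})\subseteq U$.

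Both proofs rest on the same input: Lemma \ref{lem_thetalocal} produces solutions on every rectangle over a chart, because the positive-weight equations are linear.
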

    \begin{proof}
        Suppose that $(m_{0},s_{0}) \in D_{0}$ is not in $D_{\max}$. Let $\theta: (\M \times \R)|_{D_{\max}} \rightarrow \M$ be the maximal solution constructed above. 
        
        First, we will construct a strip $S := V \times I$, where $V \in \Op_{m_{0}}(M)$ and $I$ is an open interval containing $[0,s_{0}]$, together with a map $\theta_{S}: \M|_{V} \times I \rightarrow \M$ satisfying
        \begin{equation} \label{eq_thetaequationintheproof}
            (1 \otimes \partial_{t}) \circ \theta^{\ast}_{S} = \theta^{\ast}_{S} \circ X, \; \; \theta_{S} \circ \ds_{0}^{V} = \iota_{V},
        \end{equation}
        where $\iota_{V}: \M|_{V} \rightarrow \M$ is the inclusion and $\ds_{0}^{V}: \M|_{U} \rightarrow \M|_{V} \times I$ is the zero section. Again, one can find a subdivision $0 = t_{0} < \dots < t_{N} = s_{0}$ and data (1) - (4) as in the proof of Proposition \ref{tvrz_uniqueness}, except $U_{k} \in \Op_{\theta_{0}(m_{0},t_{k})}(M)$, $V \in \Op_{m_{0}}(M)$ and $\{ I_{k} \}_{k=0}^{N}$ covers $[0,s_{0}]$. Let $I'_{k} = \cup_{j=0}^{k} I_{j}$. We can choose the intervals so that $I'_{k} \cap I_{k+1} = I_{k} \cap I_{k+1}$, that is $I_{j} \cap I_{k+1} = \emptyset$ for $j < k$. 

        Let us now use the induction on $k \in \{0,\dots,N\}$ to construct a map $\theta^{(k)}: \M|_{V} \times I'_{k} \rightarrow \M$ satisfying (\ref{eq_thetaequationintheproof}). Then let $S := V \times I'_{N}$ and $\theta_{S} := \theta^{(N)}$. We will skip the details. For $k = 0$, use Lemma \ref{lem_thetalocal} directly to construct $\theta^{(0)}: \M|_{V} \times I_{0} \rightarrow \M$ satisfying (\ref{eq_thetaequationintheproof}). For $k > 0$, assume the existence of $\theta^{(k-1)}$ and use it to determine the ``initial condition'' at $t'_{0} \in I_{k-1} \cap I_{k}$. Use Lemma \ref{lem_thetalocal} and this initial condition to construct $\vartheta: \M|_{V} \times I_{k} \rightarrow \M$. The uniqueness assertion of Lemma \ref{lem_thetalocal} then ensures that $\vartheta$ and $\theta^{(k-1)}$ agree on the overlap $V \times (I_{k-1} \cap I_{k})$, hence they glue to the unique map $\theta^{(k)}: \M|_{V} \times I'_{k} \rightarrow \M$ satisfying (\ref{eq_thetaequationintheproof}). 

        We have constructed $\theta_{S}: \M|_{V} \times I \rightarrow \M$ satisfying (\ref{eq_thetaequationintheproof}), where $S = V \times I$ for $V \in \Op_{m_{0}}(M)$ and open interval $I$ containing $[0,s_{0}]$. In particular, observe that 
        \begin{equation}
        D'_{\max} := D_{\max} \cup S
        \end{equation}
        is a flow domain on $M$ containing $(m_{0},s_{0})$. Moreover, observe that 
        \begin{equation}
            \theta|_{D_{\max} \cap S} = \theta_{S}|_{D_{\max} \cap S}.
        \end{equation}
        This follows from the fact that $D_{\max} \cap S$ is a flow domain on $V \subseteq M$ and both maps satisfy (\ref{eq_thetaquationalmostfinal}). By Proposition \ref{tvrz_uniqueness}, they must coincide. There is thus a unique $\theta': (\M \times \R)|_{D'_{\max}} \rightarrow \M$, such that 
        \begin{equation}
            \theta'|_{D_{\max}} = \theta, \; \; \theta'|_{S} = \theta_{S}.
        \end{equation}
        It follows form the construction that $\theta'$ satisfies (\ref{eq_thetaquationalmostfinal}). Since $D_{\max} \subsetneq D'_{\max}$, this contradicts the maximality. Hence $D_{\max} = D_{0}$ and the proof is finished.
    \end{proof}
    \subsection{Proving the flow equations \& Conclusion} \label{subsec_flowequations}
    By Proposition \ref{tvrz_thereismaximal}, there is a unique maximal solution $\theta: (\M \times \R)|_{D} \rightarrow \M$ of the equations (\ref{eq_thetaquationalmostfinal}). We will henceforth write just $D$ for the flow domain of $\theta_{0}$. It remains to argue that it fits into the second commutative diagram in (\ref{eq_flowdiagrams}). For each $t \in \R$, we have an open (and possibly empty) set (\ref{eq_D(t)subset}) and we can consider a graded smooth map $\theta_{(t)}: \M|_{D_{(t)}} \rightarrow \M|_{D_{(-t)}}$ defined by (\ref{eq_thetatmap}).
    
    Now, fix $s \in \R$ so that $D_{(s)} \neq \emptyset$, and consider the open subset 
    \begin{equation} \label{eq_Dsopenset}
        D_{s} := \{ (m,t) \in D_{(s)} \times \R \mid (\theta_{0}(m,s),t) \in D \} \subseteq D_{(s)} \times \R.
    \end{equation}
    Equivalently, we have $(m,t) \in D_{s}$, iff $((m,s),t) \in D'$, see Lemma \ref{lem_ordinaryflow}. Observe that $D_{s}$ is a flow domain on $D_{(s)}$. We can now construct a pair of maps:
    \begin{align}
        \theta_{[s]} := & \ \theta \circ (\1_{\M} \times +) \circ \da \circ (\ds_{s} \times \1_{\R})|_{D_{s}}, \label{eq_defthetasmap} \\ 
        \theta'_{[s]} := & \ \theta \circ (\theta \times \1_{\R}) \circ (\ds_{s} \times \1_{\R})|_{D_{s}}. \label{eq_defthetasprimemap}
    \end{align}
    We will argue that those maps satisfy the same set of differential equations with the same initial conditions, hence they must be equal. The fact that $s$ is arbitrary will then imply the commutativity of the second diagram in (\ref{eq_flowdiagrams}).
    \begin{lemma} \label{lem_thetasequations}
        For each $s \in \R$ with $D_{(s)} \neq \emptyset$, the map $\theta_{[s]}$ satisfies the equations
        \begin{equation} \label{eq_thetasequations}
            (1 \otimes \partial_{t}) \circ \theta_{[s]}^{\ast} = \theta_{[s]}^{\ast} \circ X, \; \; \theta_{[s]} \circ \ds_{0}^{(s)} = \iota_{(-s)} \circ \theta_{(s)},
        \end{equation}
        where $\ds_{0}^{(s)}: \M|_{D_{(s)}} \rightarrow (\M \times \R)|_{D_{s}}$ is the zero section, $\theta_{(s)}: \M|_{D_{(s)}} \rightarrow \M|_{D_{(-s)}}$ is the map (\ref{eq_thetatmap}), and $\iota_{(-s)}: \M|_{D_{(-s)}} \rightarrow \M$ is the canonical inclusion.
    \end{lemma}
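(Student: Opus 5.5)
We verify the two equations separately, both by unwinding the definition (\ref{eq_defthetasmap}) and using only properties of $\theta$ already established: $(1 \otimes \partial_{t}) \circ \theta^{\ast} = \theta^{\ast} \circ X$ on $D$, and the defining diagram (\ref{eq_thetatmap}) of $\theta_{(s)}$.

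\textbf{The differential equation.} We show that $1 \otimes \partial_{t}$ on $(\M \times \R)|_{D_{s}}$ is related to $1 \otimes \partial_{t}$ on $(\M \times \R)|_{D}$ under the composite
\begin{equation}
\kappa_{s} := (\1_{\M} \times +) \circ \da \circ (\ds_{s} \times \1_{\R})|_{D_{s}}.
\end{equation}
This can be checked by composing with the two projections. First, $p_{1} \circ \kappa_{s} = \iota_{(s)} \circ p_{1}$, and this pullback is annihilated by $1 \otimes \partial_{t}$. Second, $p_{2}^{\ast}(t)$ pulls back to $s + t$, whose $\partial_{t}$-derivative is $1 = \kappa_{s}^{\ast}(1)$.

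Explicitly, we check the relation on generators: $1 \otimes \partial_{t}$ applied to $\kappa_{s}^{\ast}(g \otimes 1)$ and to $\kappa_{s}^{\ast}(1 \otimes h)$ agrees with $\kappa_{s}^{\ast}$ applied after $1 \otimes \partial_{t}$. A relation of vector fields (both derivations along the same map) is determined by values on functions of the form $p_{1}^{\ast}(g)$ and $p_{2}^{\ast}(h)$, locally generating. Equivalently, the derivation $(1\otimes\partial_t)\circ\kappa_s^\ast - \kappa_s^\ast\circ(1\otimes\partial_t)$ along $\kappa_s$ vanishes on coordinates, hence everywhere.

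Having $1 \otimes \partial_{t} \sim_{\kappa_{s}} 1 \otimes \partial_{t}$, the first equation follows by composing relations:
\begin{equation}
(1 \otimes \partial_{t}) \circ \kappa_{s}^{\ast} \circ \theta^{\ast} = \kappa_{s}^{\ast} \circ (1 \otimes \partial_{t}) \circ \theta^{\ast} = \kappa_{s}^{\ast} \circ \theta^{\ast} \circ X .
\end{equation}

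\textbf{The initial condition.} We compute $\kappa_{s} \circ \ds_{0}^{(s)}$ and claim it equals $\ds_{s}$ as a map $\M|_{D_{(s)}} \rightarrow (\M \times \R)|_{D}$. Again we compose with the projections. The $\M$-component is $\iota_{(s)}$. The $\R$-component has pullback of $t$ equal to $s + 0 = s$, which matches $j_{(s)}$. Hence
\begin{equation}
\theta_{[s]} \circ \ds_{0}^{(s)} = \theta \circ \ds_{s} = \iota_{(-s)} \circ \theta_{(s)},
\end{equation}
where the last equality is the diagram (\ref{eq_thetatmap}).

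\textbf{Main obstacle.} The main obstacle is domain bookkeeping: checking that $\kappa_{s}$ really maps $D_{s}$ into $D$, so that $\theta_{[s]}$ is defined. On underlying points, $(m,t) \mapsto (m, s+t)$. Since $(m,s) \in D$ and $(\theta_{0}(m,s),t) \in D$, property (p2) of Theorem \ref{thm_fundamentalordinary} gives $t \in D^{(m)} - s$, so $(m, s+t) \in D$. This works because $D$ is the maximal domain $D_{0}$, by Proposition \ref{tvrz_DmaxisD0}.

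It remains to ensure that $\ds_{0}^{(s)}$ lands in $D_{s}$. This holds because $(\theta_{0}(m,s),0) \in D$.
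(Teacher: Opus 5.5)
Your proof is correct and is essentially the paper's argument. The paper identifies your composite $\kappa_{s}$ with $(\1_{\M} \times L_{s}^{+})|_{D_{s}}$ and gets $1 \otimes \partial_{t} \sim_{\kappa_{s}} 1 \otimes \partial_{t}$ from left-invariance of $\partial_{t}$, which is the same computation you do on the generators $p_{1}^{\ast}(g)$ and $p_{2}^{\ast}(t)$; the initial condition via $\kappa_{s} \circ \ds_{0}^{(s)} = \ds_{s}$ and the domain check via \textit{(p2)} match the paper exactly.
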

    \begin{proof}
        Let $L_{s}^{+}: \R \rightarrow \R$ be the left translation by $s \in \R$, that is $L_{s}^{+}(t) = s + t$. One has
        \begin{equation} \label{eq_maptodefinethetasdifferent}
            (\1_{\M} \times +) \circ \da \circ (\ds_{s} \times \1_{\R})|_{D_{s}} = (\1_{\M} \times L_{s}^{+})|_{D_{s}},
        \end{equation}
        and both sides land in the open submanifold $(\M \times \R)|_{D}$. In particular, the graded smooth map $\theta_{[s]}$ is well-defined and one can thus write 
        \begin{equation} \label{eq_thetasasleftranslation}
            \theta_{[s]} = \theta \circ (\1_{\M} \times L_{s}^{+})|_{D_{s}}.
        \end{equation}
        We leave the verification of (\ref{eq_maptodefinethetasdifferent}) to the reader. Recall that $\partial_{t}$ is a left-invariant vector field on $(\R,+)$, that is $L_{s}^{+}$-related to itself. Consequently, one has 
        \begin{equation} \label{eq_1otimespartialLstrelation}
            1 \otimes \partial_{t} \sim_{\1_{\M} \times L_{s}^{+}} 1 \otimes \partial_{t}.
        \end{equation}
        Observe that $(\1_{\M} \times L_{s}^{+})(D_{s}) \subseteq D$, see \textit{(p2)} of Theorem \ref{thm_fundamentalordinary}. Then $1 \otimes \partial_{t} \in \X_{\M \times \R}(D_{s})$ is $(\1_{\M} \times L_{s}^{+})|_{D_{s}}$-related to $1 \otimes \partial_{t} \in \X_{\M \times \R}(D)$. But since $1 \otimes \partial_{t} \in \X_{\M}(D)$ is $\theta$-related to $X \in \X_{\M}(M)$, this implies immediately that $1 \otimes \partial_{t} \in \X_{\M \times \R}(D_{s})$ is $\theta_{[s]}$-related to $X$. But this is precisely the first equation in (\ref{eq_thetasequations}). To prove the second equation in (\ref{eq_thetasequations}), utilize (\ref{eq_thetasasleftranslation}) together with the simple observation
        \begin{equation} \label{eq_leftranslationsections}
        (\1_{\M} \times L_{s}^{+})|_{D_{s}} \circ \ds_{0}^{(s)} = \ds_{s}.
        \end{equation}
        The rest is just the definition (\ref{eq_thetatmap}) of $\theta_{(s)}$. 
    \end{proof}
    \begin{lemma} \label{lem_thetasprimeequations}
        The statement of Lemma \ref{lem_thetasequations} remains true when $\theta_{[s]}$ is replaced by $\theta'_{[s]}$. 
    \end{lemma}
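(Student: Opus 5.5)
We must show two things for $\theta'_{[s]} = \theta \circ (\theta \times \1_{\R}) \circ (\ds_{s} \times \1_{\R})|_{D_{s}}$: the differential equation $(1 \otimes \partial_{t}) \circ \theta'^{\ast}_{[s]} = \theta'^{\ast}_{[s]} \circ X$, and the initial condition $\theta'_{[s]} \circ \ds_{0}^{(s)} = \iota_{(-s)} \circ \theta_{(s)}$. First we check well-definedness. For $(m,t) \in D_{s}$ the point $((m,s),t)$ lies in $D'$. Hence $(\ds_{s} \times \1_{\R})$ maps $(\M\times\R)|_{D_{s}}$ into $((\M \times \R)\times \R)|_{D'}$. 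The map $\theta \times \1_{\R}$ then lands in $(\M \times \R)|_{D}$ by the very definition of $D'$, so the composition with $\theta$ makes sense.

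\textbf{Factorization.} The key observation is
\begin{equation}
(\theta \times \1_{\R}) \circ (\ds_{s} \times \1_{\R})|_{D_{s}} = (\theta \circ \ds_{s}) \times \1_{\R} = (\iota_{(-s)} \circ \theta_{(s)}) \times \1_{\R},
\end{equation}
restricted to $D_{s}$, using the defining diagram (\ref{eq_thetatmap}). We verify this by composing with the projections $p_{1}$ and $p_{2}$ onto $\M$ and $\R$; both composites agree trivially. Write $\Psi := (\iota_{(-s)} \circ \theta_{(s)}) \times \1_{\R}$, restricted to $D_{s}$, so that $\theta'_{[s]} = \theta \circ \Psi$.

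\textbf{Differential equation.} For any graded smooth map $\phi$, the product $\phi \times \1_{\R}$ relates $1 \otimes \partial_{t}$ to $1 \otimes \partial_{t}$. This holds because $1\otimes\partial_t$ is characterized by being $p_{1}$-related to $0$ and $p_{2}$-related to $\partial_{t}$, and both properties are preserved under $\phi \times \1_{\R}$. Therefore $1 \otimes \partial_{t}$ on $D_{s}$ is $\Psi$-related to $1 \otimes \partial_{t}$ on $D$. Since $1 \otimes \partial_{t} \sim_{\theta} X$, composition gives $1 \otimes \partial_{t} \sim_{\theta'_{[s]}} X$, which is the first equation in (\ref{eq_thetasequations}).

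\textbf{Initial condition.} We have $\Psi \circ \ds^{(s)}_{0} = \ds_{0} \circ \iota_{(-s)} \circ \theta_{(s)}$, again checked on projections: the $\M$-component is $\iota_{(-s)}\circ\theta_{(s)}$ and the $\R$-component is $0$. Using the first diagram in (\ref{eq_flowdiagrams}), $\theta \circ \ds_{0} = \1_{\M}$, so
\begin{equation}
\theta'_{[s]} \circ \ds^{(s)}_{0} = \iota_{(-s)} \circ \theta_{(s)},
\end{equation}
as required.

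\textbf{Main obstacle.} The only delicate point is the bookkeeping of restrictions. One must confirm that each map in the chain lands in the correct open submanifold, in particular that $\ds_{0} \circ \iota_{(-s)} \circ \theta_{(s)}$ lands in $D$; this holds because $\D$ contains the zero section. These checks rely on \textit{(p2)} of Theorem \ref{thm_fundamentalordinary} and the definition of $D'$, together with the fact that a graded map into a product is determined by its components.
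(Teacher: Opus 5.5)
Your proposal is correct and follows essentially the same route as the paper. The paper chains three relatedness statements, through $\ds_{s} \times \1_{\R}$, then $\theta \times \1_{\R}$, then $\theta$. You instead first collapse the first two maps into $(\iota_{(-s)} \circ \theta_{(s)}) \times \1_{\R}$ and use the single fact that $\phi \times \1_{\R}$ relates $1 \otimes \partial_{t}$ to itself, which is the same underlying observation. Your initial-condition argument, via $\Psi \circ \ds_{0}^{(s)} = \ds_{0} \circ \iota_{(-s)} \circ \theta_{(s)}$ and $\theta \circ \ds_{0} = \1_{\M}$, is identical to the paper's.
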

    \begin{proof}
        First, observe that $(\ds_{s} \times \1_{\R})|_{D_{s}}: (\M \times \R)|_{D_{s}} \rightarrow ((\M \times \R) \times \R)|_{D'}$, so the map $\theta'_{[s]}$ is well-defined. We encourage the reader to make the following simple observations:
        \begin{enumerate}[(1)]
            \item $1 \otimes \partial_{t} \in \X_{\M \times \R}(D_{s})$ is $(\ds_{s} \times \1_{\R})|_{D_{s}}$-related to $1_{\M \times \R} \otimes \partial_{t} \in \X_{(\M \times \R) \times \R}(D')$.
            \item $1_{\M \times \R} \otimes \partial_{t}$ is $(\theta \times \1_{\R})$-related to $1 \otimes \partial_{t} \in \X_{\M \times \R}(D)$. 
            \item $1 \otimes \partial_{t} \in \X_{\M \times \R}(D)$ is $\theta$-related to $X \in \X_{\M}(M)$.
        \end{enumerate}
        Since $\theta'_{[s]}$ is the composition of these three maps, we find that $1 \otimes \partial_{t} \in \X_{\M \times \R}(D_{s})$ is $\theta'_{[s]}$-related to $X$. This is the first equation in (\ref{eq_thetasequations}). Next, observe that
        \begin{equation}
            (\ds_{s} \times \1_{\R})|_{D_{s}} \circ \ds_{0}^{(s)} = (\ds_{s}, 0^{(s)}_{\M}),
        \end{equation}
        where $0^{(s)}_{\M}: \M|_{D_{(s)}} \rightarrow \R$ is the composition of the zero $0: \{ \ast \} \rightarrow \R$ with the unique terminal arrow into $\{ \ast \}$. Consequently, one has
        \begin{equation}
            (\theta \times \1_{\R}) \circ (\ds_{s} \times \1_{\R})|_{D_{s}} \circ \ds_{0}^{(s)} = (\theta \times \1_{\R}) \circ (\ds_{s}, 0^{(s)}_{\M}) = (\iota_{(-s)} \circ \theta_{(s)}, 0_{\M}^{(s)}) = \ds_{0} \circ \iota_{(-s)} \circ \theta_{(s)}.
        \end{equation}
        By composing this equation with $\theta$ and using the second part of (\ref{eq_thetaquationalmostfinal}), we get $\theta'_{[s]} \circ \ds_{0}^{(s)} = \iota_{(-s)} \circ \theta_{(s)}$, that is the second equation of (\ref{eq_thetasequations}). This finishes the proof.
    \end{proof}
    
    \begin{lemma}  \label{lem_thetathetaprimesameequationsoffsetgeneral}
           Let $D \subseteq M \times \R$ be any flow domain on $M$. Let $\cN$ be a graded manifold equipped with a degree zero vector field $X \in \X_{\cN}(N)$. Let $\phi: \M \rightarrow \cN$ be an arbitrary graded smooth map. Suppose $\theta,\theta': (\M \times \R)|_{D} \rightarrow \cN$ are graded smooth maps with the same underlying smooth map $\theta_{0}: D \rightarrow N$, such that $1 \otimes \partial_{t} \in \X_{\M \times \R}(D)$ and $X$ are both $\theta$-related and $\theta'$-related, and 
           \begin{equation}
               \theta \circ \ds_{0} = \theta' \circ \ds_{0} = \phi,
           \end{equation}
           where $\ds_{0}: \M \rightarrow (\M \times \R)|_{D}$ is the zero section.  Then $\theta = \theta'$.       
    \end{lemma}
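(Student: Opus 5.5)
The plan is to reduce to the local uniqueness statement, Lemma \ref{lem_thetalocal}, now with $\M$ as a general source and $\cN$ as the target. Then we repeat the chaining argument from the proof of Proposition \ref{tvrz_uniqueness}. The key observation is that Proposition \ref{tvrz_pivotal} never used that source and target are the same graded domain. In its proof, the source graded domain $V^{(n_j)}$ can be replaced by any graded domain $V^{(m_j)}$ with coordinates $(y^{a},\eta^{\alpha})$. The system (\ref{eq_pivotalODE2}, \ref{eq_pivotalODE3}) with initial conditions (\ref{eq_pivotalODE4}) is still indexed by multiindices in the source odd/graded variables, and the target variables are the unknowns. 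The inductive solution on the weight $w$ goes through verbatim. At $w=0$ the equations are fixed by the prescribed body $\theta_{0}$. For $w \geq 1$ one gets linear ODE systems whose coefficients are built from $\theta_{0}$ and lower-weight data, and their solutions with prescribed initial data are unique. I would therefore first state, without repeating the computation, the analogue of Lemma \ref{lem_thetalocal}. It reads: for charts $(V,\psi)$ of $\M$ and $(U,\varphi)$ of $\cN$, an interval $I \ni t_{0}$ with $V \times I \subseteq D$ and $\theta_{0}(V \times I) \subseteq U$, and a given $\phi: \M|_{V} \to \cN|_{U}$, there is at most one $\vartheta: \M|_{V} \times I \to \cN|_{U}$ with body $\theta_{0}|_{V \times I}$, satisfying $1 \otimes \partial_{t} \sim_{\vartheta} X|_{U}$ and $\vartheta \circ \ds^{V}_{t_{0}} = \phi$. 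Only uniqueness is needed, so the hypothesis that $\theta_{0}$ solves the body equation is automatic here: it follows from $\theta$ itself by taking bodies.

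Next, fix $(m,t) \in D$; without loss of generality $t \geq 0$. Since $D^{(m)}$ is an interval containing $0$, the segment $\{m\} \times [0,t]$ lies in $D$. Cover its image under $\theta_{0}$ by charts of $\cN$. By compactness, choose a subdivision $0 = t_{0} < \dots < t_{N} = t$, a single chart neighbourhood $V \in \Op_{m}(M)$ of $\M$, charts $U_{k} \ni \theta_{0}(m,t_{k})$ of $\cN$, and open intervals $I_{k} \ni t_{k}$ with consecutive overlaps, such that $V \times I_{k} \subseteq D$ and $\theta_{0}(V \times I_{k}) \subseteq U_{k}$. This is exactly data (1)--(4) from the proof of Proposition \ref{tvrz_uniqueness}. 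Since $\theta$ and $\theta'$ share the body $\theta_{0}$, the restrictions $\theta|_{V \times I_{k}}$ and $\theta'|_{V \times I_{k}}$ both factor through $\cN|_{U_{k}}$. The relation $1 \otimes \partial_{t} \sim X$ restricts to open subsets, because vector fields and relatedness are sheaf-theoretic.

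Then induct on $k$. For $k=0$, both restrictions agree after composing with $\ds^{V}_{0}$, both giving $\phi|_{V}$; the local uniqueness gives $\theta = \theta'$ on $V \times I_{0}$. For $k>0$, pick $t'_{0} \in I_{k-1} \cap I_{k}$. The induction hypothesis gives $\theta \circ \ds^{V}_{t'_{0}} = \theta' \circ \ds^{V}_{t'_{0}}$, and local uniqueness with this initial condition gives equality on $V \times I_{k}$. Hence $\theta = \theta'$ on $V \times \bigcup_{k} I_{k}$, which is a neighbourhood of $(m,t)$. Such neighbourhoods cover $D$, so the sheaf property of graded smooth maps gives $\theta = \theta'$.

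The main obstacle is the first step: justifying that the pivotal computation, and in particular the uniqueness part, holds for a different source manifold. I expect this to be a bookkeeping check of formula (\ref{eq_thetastfpthcomponent}) with source indices. One point needs care in that check. At each weight, the top-weight unknowns enter only linearly through $[\partial_{k}X^{i}_{\fnula}]\circ\theta_{0}$ and $X^{\mu}_{\fp(\lambda)}\circ\theta_{0}$; here $\fp(\lambda)$ is a target index and the top-weight index is a source index. The argument of Proposition \ref{tvrz_pivotal} already separates these two roles, so it should transfer unchanged.
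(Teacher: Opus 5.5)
Your proposal is correct and is essentially the paper's argument. The paper proves this lemma only by remarking that it is a straightforward modification of the proof of Proposition \ref{tvrz_uniqueness}, that is, local uniqueness from Lemma \ref{lem_thetalocal} chained along the compact segment $\{m\}\times[0,t]$, and your proposal supplies exactly the modification the paper leaves implicit. That modification is your observation that the weight-by-weight uniqueness in Proposition \ref{tvrz_pivotal} never uses equality of source and target (source multiindices label the unknowns, while target indices enter only through the linear coefficients $[\partial_{k}X^{i}_{\fnula}]\circ\theta_{0}$ and $X^{\mu}_{\fp(\lambda)}\circ\theta_{0}$), and that the body ODE for $\theta_{0}$ is not needed for uniqueness.
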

    \begin{proof}
        The proof is a straightforward modification of the one of Proposition \ref{tvrz_uniqueness}.
    \end{proof}

    \begin{lemma} \label{lem_twomus}
        Let $\theta_{0}: D \rightarrow M$ be a flow on $M$. Suppose that $\mu,\mu': ((\M \times \R) \times \R)|_{D'} \rightarrow \M$ is a pair of graded smooth maps over the same underlying smooth map $\mu_{0}: D' \rightarrow M$, where $D' \subseteq (M \times \R) \times \R$ is the open subset (\ref{eq_Dprimesubset}). 

        Then $\mu = \mu'$, iff $\mu \circ (\ds_{s} \times \1_{\R})|_{D_{s}} = \mu' \circ (\ds_{s} \times \1_{\R})|_{D_{s}}$ for every $s \in \R$ with $D_{(s)} \neq \emptyset$. The open sets $D_{(s)}$ and $D_{s}$ are defined by (\ref{eq_D(t)subset}) and (\ref{eq_Dsopenset}), respectively. 
    \end{lemma}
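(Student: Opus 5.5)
The "only if" direction is immediate: if $\mu = \mu'$, composing both with $(\ds_{s} \times \1_{\R})|_{D_{s}}$ gives equal maps. For the converse, we use that graded smooth maps into $\M$ are determined locally. Since $\mu$ and $\mu'$ have the same underlying map $\mu_{0}$, it suffices to show $\mu^{\ast}(f) = \mu'^{\ast}(f)$ for every $U \in \Op(M)$ and every $f \in \C^{\infty}_{\M}(U)$. This equality of functions on $\mu_{0}^{-1}(U) \subseteq D'$ can be checked locally on $D'$.

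So fix a point $((m,s),t) \in D'$. Choose a graded chart of $\M$ around $m$ with coordinates $(x^{i},\xi^{\mu})$, and let $s'$ be the coordinate on the middle factor $\R$. Then $((\M \times \R) \times \R)$ has local coordinates $(x^{i}, s', t, \xi^{\mu})$ near this point. Both $\mu^{\ast}(f)$ and $\mu'^{\ast}(f)$ are formal power series $\sum_{\fp} g_{\fp}(x,s',t)\,\xi^{\fp}$ with smooth coefficients. We must show their coefficient functions agree near $(x(m),s,t)$.

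The key observation is the effect of pulling back along $\ds_{s_{1}} \times \1_{\R}$. In coordinates, $(\ds_{s_{1}} \times \1_{\R})^{\ast}$ sends $x^{i} \mapsto x^{i}$, $\xi^{\mu} \mapsto \xi^{\mu}$, $t \mapsto t$ and $s' \mapsto s_{1}$. On a power series it therefore acts coefficientwise, by evaluating $g_{\fp}(x,s_{1},t)$ on the open set $D_{s_{1}}$. By hypothesis, for every $s_{1}$ with $D_{(s_{1})} \neq \emptyset$ we get $g_{\fp}(x,s_{1},t) = g'_{\fp}(x,s_{1},t)$ whenever $(x,t) \in D_{s_{1}}$, i.e. whenever $((x,s_{1}),t) \in D'$. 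Every point of $D'$ is of this form for $s_{1}$ equal to its middle coordinate, and then $x \in D_{(s_{1})}$, so $D_{(s_{1})} \neq \emptyset$ automatically. Hence $g_{\fp} = g'_{\fp}$ pointwise on the whole chart neighbourhood inside $D'$, and so $\mu^{\ast}(f) = \mu'^{\ast}(f)$ there. The same-underlying-map assumption ensures both pullbacks are expressed through the same chart preimages, so this local comparison is legitimate.

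The only point needing care is justifying the coefficientwise description rigorously. One must check that the restriction of $\mu^{\ast}(f)$ to a chart of $(\M\times\R)\times\R$ commutes with composing with $\ds_{s_{1}} \times \1_{\R}$ on the slice. The underlying map of $\ds_{s_{1}}\times\1_{\R}$ is the embedding $(x,t)\mapsto((x,s_{1}),t)$, whose image is exactly the slice $\{s' = s_{1}\}$ of $D'$. The pullback of a coordinate chart restriction is the chart restriction of the pullback by naturality of sheaf morphisms. Since a smooth function vanishing on every slice vanishes identically, this completes the argument. I expect this bookkeeping with restrictions to be the main, though mild, obstacle.
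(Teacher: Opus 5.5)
Your proposal is correct and follows essentially the same route as the paper: work in a graded chart around a point of $D'$, note that pulling back along $\ds_{s}\times\1_{\R}$ evaluates the coefficient functions at the middle coordinate $s$, and conclude that the coefficients agree on every slice and hence everywhere. The only cosmetic difference is that you compare $\mu^{\ast}(f)$ and $\mu'^{\ast}(f)$ for arbitrary $f$ using a source chart alone. The paper instead also fixes a target chart $(U,\varphi)$ with $\mu_{0}((V\times I)\times J)\subseteq U$, and compares the local representatives $\hat{\mu},\hat{\mu}'$ on a box $(V\times I)\times J$.
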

    \begin{proof}
        For each $((m_{0},s_{0}),t_{0}) \in D'$, find $V \in \Op_{m_{0}}(M)$, $U \in \Op(M)$ and open intervals $I \in \Op_{s_{0}}(\R)$, $J \in \Op_{t_{0}}(\R)$, such that $(V \times I) \times J \subseteq D'$ and $\mu_{0}((V \times I) \times J) \subseteq U$. We can assume that we have local charts $(U,\varphi)$ and $(V,\psi)$ for $\M$. Note that for each $s \in I$, one has $V \times J \subseteq D_{s}$. We also have the induced local charts $(\psi \times \1_{I}) \times \1_{J}$ for $((\M \times \R) \times \R)|_{D'}$  and $\psi \times \1_{J}$ for $(\M \times \R)|_{D_{s}}$. The assumption now implies that
        \begin{equation}
            \hat{\mu} \circ (\hat{\ds}_{s} \times \1_{\R})|_{V \times J} = \hat{\mu}' \circ (\hat{\ds}_{s} \times \1_{\R})|_{V \times J},
        \end{equation}
        for each $s \in I$, where $\hat{\mu},\hat{\mu}': (\hat{V}^{(n_{j})} \times I) \times J \rightarrow \hat{U}^{(n_{j})}$ and $\hat{\ds}_{s}: \hat{V}^{(n_{j})} \rightarrow \hat{V}^{(n_{j})} \times J$ are the corresponding local representatives. It is easy to see that this implies $\hat{\mu} = \hat{\mu}'$. Since $((m_{0},s_{0}),t_{0}) \in D'$ was arbitrary, this proves that $\mu = \mu'$. 
    \end{proof}
    We can now combine all of the auxiliary statements to prove the main statement.
    \begin{tvrz} \label{tvrz_flowequationsfinal}
        Let $\theta: (\M \times \R)|_{D} \rightarrow \M$ be the maximal solution of (\ref{eq_thetaquationalmostfinal}). Then 
        \begin{equation}
            \theta \circ (\1_{\M} \times +) \circ \da|_{D'} = \theta \circ (\theta \times \1_{\R})|_{D'}.
        \end{equation}
        In other words, the second diagram in (\ref{eq_flowdiagrams}) commutes. Note that for a maximal flow domain, one has $D^{\bullet} = D'$ and thus $\D^{\bullet} = ((\M \times \R) \times \R)|_{D'}$, see Lemma \ref{lem_ordinaryflow} for the notation. 
    \end{tvrz}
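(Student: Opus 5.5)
The plan is to combine the auxiliary lemmas just established. Set $\mu := \theta \circ (\1_{\M} \times +) \circ \da|_{D'}$ and $\mu' := \theta \circ (\theta \times \1_{\R})|_{D'}$, both graded smooth maps $((\M \times \R) \times \R)|_{D'} \rightarrow \M$. We first check that they are well-defined, and then apply Lemma \ref{lem_twomus}. Since the flow domain is maximal, property \textit{(p2)} of Theorem \ref{thm_fundamentalordinary} gives $D^{\bullet} = D'$, so $\da$ maps $D'$ into $D''$ and $\1_{\M} \times +$ maps $D''$ into $D$. Also, $\theta \times \1_{\R}$ maps $D'$ into $D$ by the very definition of $D'$.

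The hypothesis of Lemma \ref{lem_twomus} requires a common underlying map. Both underlying maps are $((m,s),t) \mapsto \theta_{0}(m,s+t)$ and $((m,s),t) \mapsto \theta_{0}(\theta_{0}(m,s),t)$, because $\ul{\theta} = \theta_{0}$ by Proposition \ref{tvrz_DmaxisD0} and Proposition \ref{tvrz_therexistsflow}. These coincide on $D'$ by the group law of the ordinary flow $\theta_{0}$ (Theorem \ref{thm_fundamentalordinary}). Hence, by Lemma \ref{lem_twomus}, it suffices to show that $\mu \circ (\ds_{s} \times \1_{\R})|_{D_{s}} = \mu' \circ (\ds_{s} \times \1_{\R})|_{D_{s}}$ for every $s$ with $D_{(s)} \neq \emptyset$. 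These compositions are exactly $\theta_{[s]}$ and $\theta'_{[s]}$ from (\ref{eq_defthetasmap}) and (\ref{eq_defthetasprimemap}).

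Fix such an $s$. By Lemma \ref{lem_thetasequations} and Lemma \ref{lem_thetasprimeequations}, both $\theta_{[s]}$ and $\theta'_{[s]}$ make $1 \otimes \partial_{t} \in \X_{\M \times \R}(D_{s})$ related to $X$. Both also satisfy $\theta_{[s]} \circ \ds^{(s)}_{0} = \theta'_{[s]} \circ \ds^{(s)}_{0} = \iota_{(-s)} \circ \theta_{(s)}$. Their underlying maps are $(m,t) \mapsto \theta_{0}(m,s+t)$ and $(m,t) \mapsto \theta_{0}(\theta_{0}(m,s),t)$, which agree on $D_{s}$, again by the ordinary group law. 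We now apply Lemma \ref{lem_thetathetaprimesameequationsoffsetgeneral} with base graded manifold $\M|_{D_{(s)}}$, flow domain $D_{s}$ on $D_{(s)}$, target $\cN = \M$ with vector field $X$, and initial map $\phi = \iota_{(-s)} \circ \theta_{(s)}$. This gives $\theta_{[s]} = \theta'_{[s]}$, and Lemma \ref{lem_twomus} then concludes $\mu = \mu'$.

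I expect the main obstacle to be the bookkeeping of domains rather than any analysis. One has to check that $D_{s}$ is indeed a flow domain on $D_{(s)}$, i.e.\ that each slice $\{t \mid (\theta_{0}(m,s),t) \in D\}$ is an open interval containing $0$; this follows from \textit{(p2)}, since that set equals $D^{(m)} - s$. One also has to check that all restrictions land in the stated open submanifolds. The analytic content has already been absorbed into Lemma \ref{lem_thetathetaprimesameequationsoffsetgeneral}, whose proof mirrors Proposition \ref{tvrz_uniqueness}.
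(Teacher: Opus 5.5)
Your proposal is correct and follows essentially the same route as the paper. The paper also reduces to the slices $\theta_{[s]}$ and $\theta'_{[s]}$ via Lemma~\ref{lem_twomus}, checks that they have the same underlying map, differential equation and initial condition, and concludes with Lemma~\ref{lem_thetathetaprimesameequationsoffsetgeneral}; your additional checks (that $D_{s}$ is a flow domain on $D_{(s)}$ by \textit{(p2)}, and that the compositions are defined on $D' = D^{\bullet}$) supply bookkeeping the paper leaves implicit.
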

    \begin{proof}
        For each $s \in \R$ with $D_{(s)} \neq \emptyset$, we have considered a pair of maps $\theta_{[s]}$ and $\theta'_{[s]}$ defined by (\ref{eq_defthetasmap}, \ref{eq_defthetasprimemap}). In Lemma \ref{lem_thetasequations} and Lemma \ref{lem_thetasprimeequations}, we have shown that they satisfy the equations (\ref{eq_thetasequations}). We would like to utilize Lemma \ref{lem_thetathetaprimesameequationsoffsetgeneral} for the flow domain $D_{s}$ on $D_{(s)}$ to conclude that $\theta_{[s]} = \theta'_{[s]}$. We have to argue that their underling smooth maps coincide. For any $(m,t) \in D_{s}$, one has 
        \begin{equation}
            \ul{\theta_{[s]}}(m,t) = \theta_{0}(m, s + t) = \theta_{0}(\theta_{0}(m,s), t) = \ul{\theta'_{[s]}}(m,t),
        \end{equation}
        where we have used the fact that $\ul{\theta} = \theta_{0}$ is the ordinary flow generated by $X_{0}$. Since $\theta_{[s]} = \mu \circ (\ds \times \1_{\R})|_{D_{s}}$ and $\theta'_{[s]} = \mu' \circ (\ds \times \1_{\R})|_{D_{s}}$ for
        \begin{equation}
            \mu := \theta \circ (\1_{\M} \times +) \circ \da|_{D'}, \; \; \mu' := \theta \circ (\theta \times \1_{\R})|_{D'},
        \end{equation}
        we can utilize Lemma \ref{lem_twomus} as soon as we verify that $\ul{\mu} = \ul{\mu'}$. But this again follows immediately from the fact that $\ul{\theta} = \theta_{0}$ is a flow of $X_{0}$. This finishes the proof  
    \end{proof}
    \textbf{Conclusion:} Let us now collect all of the pieces to form the proof of Theorem \ref{thm_funamental_extraordinary} for $k = 0$. 
    
    Given a vector field $X \in \X_{\M}(M)$ of degree zero, first construct a maximal flow $\theta_{0}: D_{0} \rightarrow M$ of the underlying vector field $X_{0} \in \X_{M}(M)$. Note that the requirement $[X,X] = 0$ is superfluous. By Proposition \ref{tvrz_thereismaximal}, there exists a unique maximal solution $\theta: (\M \times \R)|_{D} \rightarrow \M$ of the equations (\ref{eq_thetaquationalmostfinal}). This means that $1 \otimes \partial_{t} \in \X_{\M \times \R}(D)$ is $\theta$-related to $X$ and $\theta$ fits into the first diagram in (\ref{eq_flowdiagrams}). It follows from Proposition \ref{tvrz_DmaxisD0} that $D = D_{0}$ and from the proof of Proposition \ref{tvrz_underlyingflow} it is clear that $\ul{\theta} = \theta_{0}$. Proposition \ref{tvrz_flowequationsfinal} then shows that $\theta$ fits into the second diagram in (\ref{eq_flowdiagrams}). In other words, $\theta: \D \rightarrow \M$ is a flow on $\M$ of degree $0$. By construction, it satisfies both properties $(i)$ and $(ii)$ in Theorem \ref{thm_funamental_extraordinary}. This finishes the proof.
    
    \section{Proof of theorem: nonzero degree} \label{app_proofneq0}

    The proof of Theorem \ref{thm_funamental_extraordinary} becomes much simpler if $X$ has \textbf{nonzero degree} $k$. For the one thing, we need only proof part \textit{(i)}, see the commentary under the statement of the theorem. That is, we must show the existence of a unique degree $k$ flow $\theta : \D \to \M$ such that $1 \otimes \partial_\tau \sim_{\theta} X$, where $\tau$ is the (only) coordinate on $\R[-k]$. Recall that the flow domain of degree $k$ is $\D = \M \times \R[-k]$. The idea is that the flow can be expressed, in the weak sense, as the exponential
    \begin{equation}\label{eq_flow_exp}
        \theta^\ast = \exp(\tau X).
    \end{equation}
    In the rest of the section we will explain and formalize this rather vague statement. Our discussion will differ somewhat based on whether $k$ is odd or even. We begin with the simpler case.

    \subsection{Odd degree} \label{subsec_odd}
    Let $X$ be a vector field on $\M$ of odd degree $k := |X|$, such that $[X,X] = 2X^2 = 0$. Let $h \in \C^\infty_\D(U)$ be an arbitrary graded function for some $U \in \Op(M)$, where we slightly abuse notation by writing $\C^\infty_\D(U)$ instead of $\C^\infty_\D(U\times \{\ast\})$. Since $|\tau| = -k$ is also odd, one can decompose $h$ uniquely as
    \begin{equation}
        h = p_1^\ast(h^\prime) + \tau \, p_1^\ast(h^{\prime\prime}),
    \end{equation}
    for some $h^\prime, h^{\prime \prime} \in \C^\infty_\M(U)$, where $p_1 : \D \to \M$ is the canonical projection. Note that we obtain
    \begin{equation}\label{eq_odd_flow_decomp}
        h^\prime = \ds_0^\ast(h), \quad h^{\prime \prime} = \left(\ds_0^\ast \circ (1 \otimes \partial_\tau)\right) (h),
    \end{equation}
    where $\ds_0 := (\1_\M, 0_\M) : \M \to \D$, see Definition \ref{def_graded_flow}. Thus, if $\theta : \D \to \M$ is a flow of $X$, for any $f \in \C^\infty_\M(U)$ there must be
    \begin{equation}
    \begin{split}
        \theta^\ast(f) &= p_1^\ast(\ds_0^\ast(\theta^\ast(f))) + \tau \, p_1^\ast\left(\left(\ds_0^\ast \circ (1 \otimes \partial_\tau) \circ \theta^\ast\right)(f)\right) \\
        &= p_1^\ast(f) + \tau \, p_1^\ast(X(f)), \label{eq_odd_flow}
    \end{split}
    \end{equation}
    where we used the first flow diagram (\ref{eq_flowdiagrams}) and the relation $1 \otimes \partial_\tau \sim_\theta X$. More precisely, we used that $(1\otimes\partial_\tau)|_{U} \sim_{\theta|_{U}}X|_{U}$, but for clarity we did not write the restrictions.

    Let us verify that (\ref{eq_odd_flow}) defines a graded smooth map. Firstly, for every $U \in \Op(M)$ we must have a graded algebra morphism $\theta^\ast : \C^\infty_\M(U) \to \C^\infty_\D(U)$. Linearity is clear, and direct computation verifies that for any $f,g \in \C^\infty_\M(U)$ there is
    \begin{equation}
        \theta^\ast(fg) = \theta^\ast(f) \theta^\ast(g) + O(\tau^2),
    \end{equation}
    where the $O(\tau^2)$ term is in fact zero, since $|\tau| = -k$ is odd. Secondly, these component maps must form a sheaf morphism $\theta^\ast : \C^\infty_\M \to \ul{\theta}_\ast \C^\infty_\D$, but naturality with respect to restrictions is clear. Lastly, the induced stalk map $[f]_{\ul{\theta}(x)} \mapsto [\theta^\ast(f)]_{x}$ must be a local ring morphism for any $x \in D $, see \cite[Definitions 2.24 and 3.20]{Vysoky:2022gm}. This amounts to showing that if $\ul{f}(\ul{\theta}(x)) = 0$ then $\ul{\theta^\ast(f)}(x) = 0$, but this is so:
    \begin{equation}
        \ul{\theta^\ast(f)}(x) =  \ul{p_1^\ast(f)} + \ul{\tau \, p_1^\ast(X(f))} = \ul{f}(\ul{p_1}(x)) + 0 = \ul{f}(\ul{\theta}(x)) = 0,
    \end{equation}
    Where we use that $\ul{\theta} = \ul{p_1}$.

    We are left to argue that (\ref{eq_odd_flow}) defines a flow of $X$. The first flow diagram in (\ref{eq_flowdiagrams}) follows immediately from the definition. Let us redraw the second flow diagram here:
    \begin{equation}\label{eq_second_flow_diag_odd}
        \begin{tikzcd}
            \left(\M \times \R[-k](\rho)\right) \times \R[-k](\sigma)
            \arrow{d}{\da} 
            \arrow{r}{\theta \times \1_{\R}} 
                & \M \times \R[-k](\tau) 
                \arrow{dd}{\theta}\\
            \M \times (\R[-k](\rho) \times \R[-k](\sigma)) 
            \arrow{d}{\1_{\M} \times +}
                &{} \\
            \M \times \R[-k](\tau) \arrow{r}{\theta}
                & \M.
        \end{tikzcd},
    \end{equation}
    where we now explicitly indicate the graded coordinate on $\R[-k]$. This is so that we can pullback a function $f \in \C^\infty_\M(U)$ through both paths of the diagram. Let us also not explicitly write pullbacks by the canonical projection, i.e. we will write $f$ instead of $p_1^\ast(f)$. One path through the diagram (\ref{eq_second_flow_diag_odd}) reads:
    \begin{equation}\label{eq_odd_flow_diag_path1}
        (\theta \times \1_{\R})^\ast(\theta^\ast(f)) = (\theta \times \1_{\R})^\ast \left( f + \tau \, X(f)  \right) = f + \rho X(f) + \sigma \, \left(X(f) + 
        \rho \, X^2(f) \right),
    \end{equation}
    while the second path reads:
    \begin{equation}
        (\1_\M \times +)^\ast(\theta^\ast(f)) = (\1_\M \times +)^\ast \left( f + \tau \, X(f)\right) = f + (\rho + \sigma)\, X(f).
    \end{equation}
    We see that (\ref{eq_second_flow_diag_odd}) commutes because $X$ is by assumption a homological vector field. Finally, we must verify that $X$ is the infinitesimal generator of $\theta$. For a function $f \in \C^\infty_\M(M)$ we find
    \begin{equation}
        \left((1 \otimes \partial_\tau) \circ \theta^\ast \right)(f) = (1 \otimes \partial_\tau) \left(f + \tau \, X(f) \right) = X(f),
    \end{equation}
    and
    \begin{equation}\label{eq_odd_flow_infinitesimal_generator}
        \theta^\ast(X(f)) = X(f) + \tau \, X^2(f) = X(f),
    \end{equation}
    where we use the homological property of $X$ once more. This finishes the proof of Theorem \ref{thm_funamental_extraordinary} in the case $|X|$ odd. Some remarks are in order:

    \begin{rem}
        In (\ref{eq_odd_flow}) we found an explicit prescription for the flow $\theta$ of an odd vector field $X$. It agrees with the claim (\ref{eq_flow_exp}) that the flow ``is an exponential'' $\theta^\ast = \exp(\tau \, X)$ in the weak sense if we write the exponential as its Taylor expansion and recall that only terms with $\tau$ to the power of $0$ or $1$ survive.
    \end{rem} 

    \begin{rem}
        Suppose $X$ is not homological, i.e. $X^2 \neq 0$. We could still define the graded smooth map $\theta : \D \to \M$ by (\ref{eq_odd_flow}), and it would satisfy the first flow diagram in (\ref{eq_flowdiagrams}). However, it would not satisfy the second flow diagram, see (\ref{eq_odd_flow_diag_path1}). Furthermore, $1 \otimes \partial_\tau$ would not be $\theta$-related to $X$, see (\ref{eq_odd_flow_infinitesimal_generator}). Indeed, we knew this could not be so: see Remark \ref{rem_homological}.
    \end{rem}

    \subsection{Even nonzero degree} \label{subsec_even}

    Let $X$ be a vector field on $\M$ of even nonzero degree $|X| = k$. The proof follows the same lines as in the odd case, only details are somewhat more involved since $\tau$ does not square to zero. We would like to write a general function $h \in \C^\infty_{\M}(U)$ as
    \begin{equation}\label{eq_even_nozero_general_function}
        h = \sum_{\ell = 0}^\infty \frac{1}{\ell!} \, p_1^\ast(h_{\ell}) \, \tau^\ell,
    \end{equation}
    for some unique functions $h_{\ell} \in \C^\infty_\M(U)$. Here the factor $\frac{1}{\ell!}$ is added simply for convenience. However, the right hand side of (\ref{eq_even_nozero_general_function}) is not actually defined, so let us give it some meaning. Suppose $V \in \Op(U)$ is some coordinate open set with coordinates $x^i, \xi^\mu$ on $\M|_{V}$, hence we have coordinates $x^i, \xi^\mu$ and $\tau$ on $\D|_V$. For the coordinate representative of $h|_{V}$, we can write
    \begin{equation}
        \hat{h} = \sum_{(\fp, \ell)} \frac{1}{\ell!}\hat{h}_{\fp, \ell} \, \xi^{\fp} \,\tau^\ell,
    \end{equation}
    where $(\fp, \ell) \in \ol{\N}_{|h|}^{n_\ast + 1}$, see section \textbf{\ref{subsection_setting_the_notation}} for the notation. In particular, it is fully and uniquely determined by the collection of (ordinary) smooth functions $\hat{h}_{\fp, \ell}$. By (\ref{eq_even_nozero_general_function}) we actually mean that
    \begin{equation}
        (\hat{h}_\ell)_{\fp} := \hat{h}_{\fp, \ell}, 
    \end{equation}
    for any $\ell \in \{0,1,\dotsc\}$ and any $\fp$ such that $(\fp, \ell) \in \ol{\N}_{|h|}^{n_\ast + 1}$, and also for any coordinate subset $V \in \Op(U)$. Similarly as in (\ref{eq_odd_flow_decomp}), we can obtain the graded functions $h_{\ell}$ from $h$ via
    \begin{equation}\label{eq_even_component_functions}
      h_\ell = (\ds_0^\ast \circ (1 \otimes \partial_\tau)^\ell)(h),
    \end{equation}
    which can easily be seen in local coordinates. Suppose $\theta$ is the flow of $X$. Then for any $f \in \C^\infty_\M(U)$ we would have
    \begin{equation}\label{eq_even_theta_components}
        [\theta^\ast(f)]_\ell = (\ds_0^\ast \circ (1 \otimes \partial_\tau)^\ell \circ \theta^\ast)(f) = (\ds_0^\ast \circ \theta^\ast \circ X^\ell )(f) = X^\ell(f),
    \end{equation}
    hence
    \begin{equation}\label{eq_even_nonzero_flow}
        \theta^\ast(f) = \sum_{\ell = 0}^\infty \frac{1}{\ell!} \, p_1^\ast\left(X^\ell (f)\right)  \tau^\ell,
    \end{equation}
    where the meaning of the notation is explained in the text above. Note that to improve readability, we write $X(f)$ instead of the correct $X|_{U}(f)$. We are left to argue that (\ref{eq_even_nonzero_flow}) indeed defines a flow of $X$. As in the odd case, the first step is to show that (\ref{eq_even_nonzero_flow}) in fact defines a graded smooth map $\theta : \D \to \M$, for $\ul{\theta}$ trivial. For this we need to make sure that the multiplication of infinite sums of the form (\ref{eq_even_nozero_general_function}) behaves as expected.
    \begin{lemma}\label{lemma_even_multiplying}
        Let $h,h^\prime \in \C^\infty_\D(U)$. Then
        \begin{equation}
            hh^\prime = \sum_{\ell = 0}^\infty \frac{1}{\ell!}p_1^\ast\left(\sum_{r = 0}^\ell \binom{\ell}{r}  h_{\ell - r} h^\prime_r \,\right) \tau^\ell.
        \end{equation}
    \end{lemma}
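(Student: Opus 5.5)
The plan is to prove the identity through the characterization (\ref{eq_even_component_functions}) of the components, $h_\ell = (\ds_0^\ast \circ (1 \otimes \partial_\tau)^\ell)(h)$. By definition of the notation (\ref{eq_even_nozero_general_function}), it suffices to show that
\begin{equation*}
(hh')_\ell = \sum_{r=0}^{\ell} \binom{\ell}{r} h_{\ell-r} h'_r
\end{equation*}
for every $\ell \in \N_0$. Both sides are graded functions on $\M|_U$, and the components determine a function uniquely. So the statement reduces to an identity between the graded functions $(\ds_0^\ast \circ (1\otimes\partial_\tau)^\ell)(hh')$ and the right-hand side.

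The first step is to note that $1 \otimes \partial_\tau$ has even degree $k$, since $|\tau| = -k$ is even. Hence it is an ordinary (non-graded) derivation of the algebra $\C^\infty_\D(U)$, with no signs in the Leibniz rule. Induction on $\ell$ then gives the general Leibniz formula
\begin{equation*}
(1\otimes\partial_\tau)^\ell(hh') = \sum_{r=0}^{\ell}\binom{\ell}{r}\,(1\otimes\partial_\tau)^{\ell-r}(h)\,(1\otimes\partial_\tau)^{r}(h').
\end{equation*}
The inductive step is the usual Pascal identity argument. The second step is to apply $\ds_0^\ast$, which is a morphism of graded algebras, to this formula. This yields $(hh')_\ell = \sum_r \binom{\ell}{r} h_{\ell-r}h'_r$, which is the claim.

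The only point needing care is the justification of (\ref{eq_even_component_functions}) itself, and the claim that the collection $\{h_\ell\}$ determines $h$. Both are local statements. On a coordinate set $V$ with coordinates $x^i,\xi^\mu,\tau$, the operator $1\otimes\partial_\tau$ acts on the formal power series termwise as $\partial/\partial\tau$. Here $\tau$ is even, so it is a polynomial-type variable. The pullback $\ds_0^\ast$ sets $\tau = 0$. Therefore $\ds_0^\ast(\partial_\tau^\ell \hat h)$ picks out exactly the coefficients $\hat h_{\fp,\ell}$, because the $\frac{1}{\ell!}$ normalization cancels the $\ell!$ produced by differentiation. Since $\C^\infty_\D$ is a sheaf, these local identities glue.

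I expect no serious obstacle. The mildest subtlety is checking that termwise differentiation of formal power series in $\tau$ is legitimate. This holds because each fixed coefficient of $\xi^{\fp}\tau^\ell$ receives contributions from only finitely many terms.
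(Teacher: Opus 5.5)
Your proposal is correct and follows the same approach as the paper. The paper also characterizes $(hh')_\ell$ via $\ds_0^\ast \circ (1\otimes\partial_\tau)^\ell$, applies the sign-free general Leibniz rule for the even vector field $1\otimes\partial_\tau$, and then uses that $\ds_0^\ast$ is an algebra morphism; your local-coordinate justification of the component formula simply spells out what the paper calls easy to see in coordinates.
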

    \begin{proof}
        Recall (\ref{eq_even_nozero_general_function}) and (\ref{eq_even_component_functions}). Under that notation we have
        \begin{equation}
            hh^\prime = \sum_{\ell = 0}^\infty \frac{1}{\ell!} \, p_1^\ast\left((hh^\prime)_\ell\right) \tau^\ell,
        \end{equation}
        where
        \begin{equation}
        \begin{split}
            (hh^\prime)_\ell &= \left(\ds_0^\ast\circ (1 \otimes \partial_\tau)^\ell\right)(hh^\prime) \\
            &= \ds_0^\ast \left(\sum_{r = 0}^\ell \binom{\ell}{r} (1 \otimes \partial_\tau)^{\ell - r}(h) \cdot (1 \otimes \partial_\tau)^r(h^\prime) \right) \\
            &= \sum_{r = 0}^\ell \binom{\ell}{r}  h_{\ell - r} h^\prime_r,
        \end{split}
        \end{equation}
        which shows the claim. In derivation, we used the fact that the vector field $1 \otimes \partial_\tau$ is even.
        \end{proof}
    As an immediate consequence we obtain
    \begin{equation}
        \theta^\ast(fg) = \sum_{\ell = 0}^\infty \frac{1}{\ell!} \, p_1^\ast\left(X^\ell (fg)\right)  \tau^\ell = \sum_{\ell = 0}^\infty \frac{1}{\ell!} \, p_1^\ast\left(\sum_{r = 0}^\ell \binom{\ell}{r} X^{\ell - r}(f)X^{r}(g)\right)  \tau^\ell = \theta^\ast(f) \, \theta^\ast(g),
    \end{equation}
    for any $f,g \in \C^\infty_\M(U)$, where the last equality follows from Lemma \ref{lemma_even_multiplying} and (\ref{eq_even_theta_components}). The reader is left to verify that $\theta^\ast$ is linear and behaves naturally with respect to restrictions, both of which can be shown similarly as in Lemma \ref{lemma_even_multiplying}. We are left to argue that the induced stalk map $[f]_{\ul{\theta}(x)} \mapsto [\theta^\ast(f)]_x$ is a local ring morphism for any $x \in D$, which is the same as showing that $\ul{f}(\ul{\theta}(x)) = 0$ implies $\ul{\theta^\ast(f)}(x) = 0$. But from the definition it is clear that $\ul{\theta^\ast(f)}(x) = 0$ if and only if $\ul{p_1^\ast(f)}(x) = 0$, and there is $\ul{p_1^\ast(f)}(x) = \ul{f}(\ul{p_1}(x)) = \ul{f}(\ul{\theta}(x))$. We conclude that (\ref{eq_even_nonzero_flow}) defines a graded smooth map $\theta : \D \to \M$. 
    
    Let us see if $\theta$ is the flow of $X$. The first flow diagram in (\ref{eq_flowdiagrams}) follows immediately from the definition and (\ref{eq_even_component_functions}) for $\ell = 0$. We also have $1 \otimes \partial_\tau \sim_\theta X$ per the following lemma:
    \begin{lemma}
        Let $h \in \C^\infty_\D(U)$, then for any $r \in \N$ there is
        \begin{equation}
            (1 \otimes \partial_\tau)^r(h) = \sum_{\ell = 0}^\infty \frac{1}{\ell!} \, p_1^\ast(h_{\ell + r}) \, \tau^\ell.
        \end{equation}
        As a consequence, $1 \otimes \partial_\tau \sim_\theta X$.
    \end{lemma}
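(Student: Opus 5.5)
The plan is to work through the characterization (\ref{eq_even_component_functions}) of the components $h_\ell$, i.e. $h_\ell = (\ds_0^\ast \circ (1 \otimes \partial_\tau)^\ell)(h)$, together with the uniqueness of the expansion (\ref{eq_even_nozero_general_function}). For the first claim, set $g := (1 \otimes \partial_\tau)^r(h) \in \C^\infty_\D(U)$. By uniqueness it suffices to compute its components $g_\ell$ for every $\ell \in \N_0$. Applying (\ref{eq_even_component_functions}) to $g$ gives
\begin{equation*}
g_\ell = (\ds_0^\ast \circ (1\otimes\partial_\tau)^\ell)\big((1\otimes\partial_\tau)^r(h)\big) = (\ds_0^\ast \circ (1\otimes\partial_\tau)^{\ell+r})(h) = h_{\ell+r},
\end{equation*}
which is exactly the stated formula. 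To be careful about what uniqueness means here, I would do the check in a coordinate chart $V \in \Op(U)$. There $1 \otimes \partial_\tau$ acts on $\hat h = \sum \frac{1}{\ell!}\hat h_{\fp,\ell}\,\xi^\fp\tau^\ell$ termwise as $\partial_\tau$. Since $\tau$ is even, this shifts the index: $\frac{1}{\ell!}\partial_\tau \tau^\ell = \frac{1}{(\ell-1)!}\tau^{\ell-1}$. The coefficient identity $\hat g_{\fp,\ell} = \hat h_{\fp,\ell+r}$ follows directly, and it is compatible with restrictions because $1\otimes\partial_\tau$ is a vector field, hence a sheaf morphism.

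For the consequence, I need $(1\otimes\partial_\tau)\circ\theta^\ast = \theta^\ast\circ X$ on every $f \in \C^\infty_\M(U)$. Take $r=1$ and $h = \theta^\ast(f)$, whose components are $h_\ell = X^\ell(f)$ by the definition (\ref{eq_even_nonzero_flow}). Then
\begin{equation*}
(1\otimes\partial_\tau)(\theta^\ast(f)) = \sum_{\ell=0}^\infty \frac{1}{\ell!}\, p_1^\ast\big(X^{\ell+1}(f)\big)\,\tau^\ell = \sum_{\ell=0}^\infty \frac{1}{\ell!}\, p_1^\ast\big(X^{\ell}(X(f))\big)\,\tau^\ell = \theta^\ast(X(f)).
\end{equation*}
This again uses uniqueness of the expansion, now applied to $X(f) \in \C^\infty_\M(U)$. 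It is legitimate because $X$ restricts to $U$, so $X^\ell(X(f)) = X^{\ell+1}(f)$ on each $U$.

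The only delicate point is that (\ref{eq_even_nozero_general_function}) is a formal notation. One must make sure that $(1\otimes\partial_\tau)$ commutes with the formal infinite sum, and in particular that $\partial_\tau$ annihilates $p_1^\ast(h_\ell)$, which holds since $1\otimes\partial_\tau \sim_{p_1} 0$. I would settle this by the coordinate computation above, where everything reduces to coefficientwise identities of formal power series in $\xi^\fp\tau^\ell$. Apart from that, the argument is pure bookkeeping.
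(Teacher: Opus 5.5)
Your proposal is correct and follows the paper's proof essentially verbatim. The first claim comes from applying (\ref{eq_even_component_functions}) to $(1\otimes\partial_\tau)^r(h)$, which yields components $h_{\ell+r}$. The consequence comes from taking $r=1$ and $h=\theta^\ast(f)$, whose components are $X^\ell(f)$ by (\ref{eq_even_nonzero_flow}). Your extra coordinate check, which uses that $\tau$ is even so that $\frac{1}{\ell!}\partial_\tau\tau^\ell=\frac{1}{(\ell-1)!}\tau^{\ell-1}$, only makes explicit the bookkeeping that the paper leaves implicit.
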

    \begin{proof}
        Recall (\ref{eq_even_nozero_general_function}) and (\ref{eq_even_component_functions}). Then we can write
        \begin{equation}
            \left( (1\otimes \partial_\tau)^r (h) \right)_\ell = \left(\ds_0^\ast \circ (1 \otimes \partial_\tau)^{\ell + r}\right)(h) = h_{\ell + r},
        \end{equation}
        which shows the first claim. Now consider some $f \in \C^\infty_\M(M)$ and write
        \begin{equation}
            \left((1 \otimes \partial_\tau) \circ \theta^\ast\right)(f) = \sum_{\ell = 0}^\infty \frac{1}{\ell!} \, p_1^\ast(X^{\ell+1}(f)) \, \tau^\ell = \theta^\ast(X(f)),
        \end{equation}
        where we used the definition (\ref{eq_even_nonzero_flow}).
    \end{proof}
    The most involved is the verification of the second flow diagram in (\ref{eq_flowdiagrams}), which is redrawn for convenience as (\ref{eq_second_flow_diag_odd}) in the previous section. Due to triviality of $\ul{\theta}$ in the case $|X| \neq 0$ we will dispense with the explicit writing of the associator diffeomorphism for the rest of the section. Thus consider the graded manifold
    \begin{equation}
        \D^\bullet := \M \times \R[-k](\rho) \times \R[-k](\sigma),
    \end{equation}
    where we explicitly denote the graded coordinate on each copy of the shifted vector space $\R[-k]$. Let $h \in \C^\infty_{\D^\bullet}(U)$ be a general function. Similarly as in (\ref{eq_even_nozero_general_function}) we can write
    \begin{equation}\label{eq_even_general_function_on_Dbul}
        h = \sum_{\ell,r = 0}^{\infty} \frac{1}{\ell! r!} \, p_1^\ast(h_{\ell,r}) \, \rho^\ell \sigma^r,
    \end{equation}
    for unique graded functions $h_{\ell, r} \in \C^\infty_{\M}(U)$. One obtains these graded functions by
    \begin{equation}
        h_{\ell,r} = \left(\ds_{00}^\ast \circ (1 \otimes \partial_\rho \otimes 1)^\ell \circ (1 \otimes 1 \otimes \partial_\sigma)^r \right)(h),
    \end{equation}
    where $\ds_{00} : \M \to \D^{\bullet}$ is the ``double zero section'', $\ds_{00} = (\1_\M, 0_\M, 0_\M)$  where $0_\M : \M \to \R[-k]$ is the map given by $0_{\M}^\ast(\tau) = 0$.

    \begin{lemma}
        Let $f \in \C^\infty_\M(U)$. Then
        \begin{equation}
            \left((\theta \times \1_{\R[-k]})^\ast \circ \theta^\ast \right)(f) = \sum_{\ell,r = 0}^\infty \frac{1}{\ell!\, r!} \, p_1^\ast(X^{\ell + r}(h_{\ell,r})) \, \rho^\ell \sigma^r = \left((\1_{\R[-k]} \times +)^\ast \circ \theta^\ast \right)(f)
        \end{equation}
    \end{lemma}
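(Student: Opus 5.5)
We read the displayed formula with $X^{\ell+r}(f)$ in place of $X^{\ell+r}(h_{\ell,r})$ and with $(\1_{\M}\times +)^\ast$ in place of $(\1_{\R[-k]}\times +)^\ast$; this is evidently what is meant. The plan is to avoid manipulating the infinite sums directly. Instead we compute the component functions of both sides with the extraction formula stated just before the lemma,
\begin{equation*}
h_{\ell,r} = \left(\ds_{00}^\ast \circ (1 \otimes \partial_\rho \otimes 1)^\ell \circ (1 \otimes 1 \otimes \partial_\sigma)^r \right)(h).
\end{equation*}
We then show that both sides have components $X^{\ell+r}(f)$. Uniqueness of the decomposition (\ref{eq_even_general_function_on_Dbul}) then gives both equalities. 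Since $k$ is even, all vector fields involved ($X$, $\partial_\rho$, $\partial_\sigma$, $\partial_\tau$) are even. Hence no signs appear, and all the relevant derivations commute.

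\textbf{Right-hand side.} Because $(\R[-k],+)$ is Abelian, $\partial_\tau$ is both left- and right-invariant. So $1\otimes\partial_\rho\otimes 1$ and $1\otimes 1\otimes\partial_\sigma$ are both $(\1_\M\times +)$-related to $1\otimes\partial_\tau$. This is the same fact used in the proof of Proposition \ref{tvrz_Xisinvariantunderitsflow}. Iterating, and using $1\otimes\partial_\tau\sim_\theta X$ (established in the preceding lemma), we get
\begin{equation*}
(\partial_\rho)^\ell(\partial_\sigma)^r\circ(\1_\M\times+)^\ast\circ\theta^\ast = (\1_\M\times+)^\ast\circ\theta^\ast\circ X^{\ell+r}.
\end{equation*}
Next we use $(\1_\M\times+)\circ\ds_{00}=\ds_0$, which is checked on the coordinates, since $+^\ast(\tau)=\rho+\sigma\mapsto 0$. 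Together with $\theta\circ\ds_0=\1_\M$, applying $\ds_{00}^\ast$ yields $X^{\ell+r}(f)$.

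\textbf{Left-hand side.} First, $1\otimes1\otimes\partial_\sigma$ is $(\theta\times\1)$-related to $1\otimes\partial_\tau$. Hence
\begin{equation*}
(\partial_\sigma)^r\circ(\theta\times\1)^\ast\circ\theta^\ast=(\theta\times\1)^\ast\circ\theta^\ast\circ X^r.
\end{equation*}
Then factor $\ds_{00}=(\ds_0\times\1)\circ\ds_0'$, where $\ds_0' := (\1_{\D},0_{\D})\colon \D\to\D^\bullet$ is the zero section in the $\sigma$-variable. The field $1\otimes\partial_\rho$ on $\D$ is $\ds_0'$-related to $1\otimes\partial_\rho\otimes1$. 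This lets us move $(\partial_\rho)^\ell$ past $\ds_0'^\ast$. Next, $(\theta\times\1)\circ\ds_0'=(\theta,0)=\ds_0\circ\theta$, so $\ds_0'^\ast\circ(\theta\times\1)^\ast\circ\theta^\ast=\theta^\ast\circ\ds_0^\ast\circ\theta^\ast=\theta^\ast$. What remains is $\ds_0^\ast\circ(1\otimes\partial_\tau)^\ell\circ\theta^\ast(X^rf)$. By (\ref{eq_even_theta_components}) this equals $X^{\ell+r}(f)$.

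\textbf{Main obstacle.} The main step to be careful with is the bookkeeping of these relatedness and section identities. These are $(\theta\times\1)\circ\ds_0'=\ds_0\circ\theta$, $(\1_\M\times+)\circ\ds_{00}=\ds_0$, and the three relatedness claims. Each is verified on the coordinate functions $x^i,\xi^\mu,\rho,\sigma$, using that the underlying maps are trivial. Since all maps are over the identity on $M$, no restrictions to open subsets are needed. Once these identities hold, the computation is purely formal, and the lemma follows by comparing components.
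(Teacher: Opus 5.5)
Your proof is correct. On the right-hand side it is essentially the paper's computation: left- and right-invariance of $\partial_\tau$, then $1\otimes\partial_\tau\sim_\theta X$, then $\theta\circ(\1_{\M}\times+)\circ\ds_{00}=\1_{\M}$. Your reading of the statement with $X^{\ell+r}(f)$ and $(\1_{\M}\times+)^\ast$ is the intended one.

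On the left-hand side you take a different route. The paper turns $(1\otimes\partial_\rho\otimes 1)^\ell$ into $(X\otimes 1)^\ell$ via relatedness along $\theta\times\1_{\R[-k]}$. It then pushes this through $\theta^\ast$ using $X\otimes 1\sim_\theta X$, i.e.\ the invariance of $X$ under its own flow (Proposition \ref{tvrz_Xisinvariantunderitsflow}). It finishes with $\theta\circ(\theta\times\1_{\R[-k]})\circ\ds_{00}=\1_{\M}$. You instead split off the zero section in $\sigma$ and use $(\theta\times\1)\circ\ds_0'=\ds_0\circ\theta$ to reduce to the one-variable identity (\ref{eq_even_theta_components}).

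The paper's version treats both sides with the same relatedness pattern, but yours is logically cleaner. The proof of Proposition \ref{tvrz_Xisinvariantunderitsflow} starts from the second flow diagram, which is exactly what this lemma is meant to establish for the candidate $\theta$ of (\ref{eq_even_nonzero_flow}). So the paper's citation is circular as written and must be backed by a direct check of $X\otimes 1\sim_\theta X$, e.g.\ Lemma \ref{lemma_even_x_invariant_under_x} with $Y=X$. You only need $1\otimes\partial_\tau\sim_\theta X$ and $\theta\circ\ds_0=\1_{\M}$, both already verified from the explicit formula.

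One notational slip: with $\ds_0'\colon\D\to\D^\bullet$ the factorization is $\ds_{00}=\ds_0'\circ\ds_0$, so that $\ds_{00}^\ast=\ds_0^\ast\circ(\ds_0')^\ast$. The composite $(\ds_0\times\1)\circ\ds_0'$ as you wrote it does not typecheck, although your subsequent steps use the correct order.
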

    \begin{proof}
        In the notation of (\ref{eq_even_general_function_on_Dbul}) we have
         \begin{equation}
    \begin{split}
       [\left((\theta \times \1_{\R[-k]})^\ast \circ \theta^\ast \right)(f)]_{\ell,r} &= \left(\ds_{00}^\ast \circ (1 \otimes \partial_\rho \otimes 1)^\ell \circ (1 \otimes 1 \otimes \partial_\sigma)^r \circ (\theta \times \1_{\R[-k]})^\ast \circ \theta^\ast \right)(f) \\
            &= \left(\ds_{00}^\ast \circ (1 \otimes \partial_\rho \otimes 1)^\ell \circ (\theta \times \1_{\R[-k]})^\ast \circ (1 \otimes \partial_\tau)^r  \circ \theta^\ast \right)(f) \\
            &= \left(\ds_{00}^\ast \circ (\theta \times \1_{\R[-k]})^\ast \circ (X \otimes 1)^\ell \circ \theta^\ast\circ  X^r \right)(f) \\
            &= \left(\ds_{00}^\ast \circ (\theta \times \1_{\R[-k]})^\ast \circ \theta^\ast\circ  X^{\ell+r} \right)(f) \\
            &= X^{\ell+r}(f),
   \end{split}
   \end{equation}
        where we made use of the fact that $X \otimes 1 \sim_{\theta} X$ by Proposition \ref{tvrz_Xisinvariantunderitsflow}. We also invite the reader to verify that $ \theta \circ (\theta \times \1_{\R[-k]}) \circ \ds_{00} = \1_{\M}$.  Similarly we have
        \begin{equation}
        \begin{split}
            [\left((\1_{\M} \times +)^\ast \circ \theta^\ast \right)(f)]_{\ell,r} &= \left(\ds_{00}^\ast \circ (1 \otimes \partial_\rho \otimes 1)^\ell \circ (1 \otimes 1 \otimes \partial_\sigma)^r \circ (\1_{\M} \times +)^\ast \circ \theta^\ast  \right)(f)\\
            &= \left(\ds_{00}^\ast \circ (\1_{\M} \times +)^\ast \circ \theta^\ast \circ X^{\ell + r} \right)(f) \\
            &=X^{\ell+r}(f),
        \end{split}
        \end{equation}
        where now we use that $\partial_\tau$ is both a left- and right-invariant vector field on $\R[-k]$. In other words, both $1 \otimes \partial_\sigma \sim_+ \partial_\tau$ and $\partial_\rho \otimes 1 \sim_+ \partial_\tau$. This was used together with $X \otimes 1 \sim_\theta X$ and $\theta \circ (\1_{\M}\times +) \circ \ds_{00} = \1_{\M}$. This ends the proof.
    \end{proof}
    \begin{rem}
        We have shown that (\ref{eq_even_nonzero_flow}) gives an explicit prescription for the flow of a vector field $X$ of nonzero even degree. Similarly as in the odd degree case, it agrees with the claim (\ref{eq_flow_exp}) that $\theta^\ast = \exp(\tau X)$ in the weak sense, when one views the exponential as its Taylor series.
    \end{rem}

    \section{Invariance under flows: proofs} \label{app_invariance}

    This is a section containing technical statements and their proofs utilized in Section \ref{sec_invariance}. 

    \begin{lemma} \label{lem_relatedwithiV}
        Let $\M$ and $\cN$ be graded manifolds. Let $U \in \Op(M)$ and $V \in \Op(N)$ be open subsets together with a graded smooth map $\phi: \M|_{U} \rightarrow \cN|_{V}$. Let $X \in \X_{\M}(U)$ and $Y \in \X_{\cN}(V)$ be two vector fields. Then $X \sim_{\phi} Y$, if and only if 
        \begin{equation} \label{eq_relatedwithiV}
            X \circ \phi^{\ast} \circ \iota_{V}^{\ast} = \phi^{\ast} \circ Y \circ \iota_{V}^{\ast},
        \end{equation}
        where $\iota_{V}: \M|_{V} \rightarrow \M$ is the canonical inclusion, that is $\iota^{\ast}_{V}(f) = f|_{V}$ for all $f \in \C^{\infty}_{\cN}(N)$.
        \end{lemma}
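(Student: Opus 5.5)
The ``only if'' direction is immediate. If $X \circ \phi^{\ast} = \phi^{\ast} \circ Y$ holds as maps $\C^{\infty}_{\cN}(V) \rightarrow \C^{\infty}_{\M}(U)$, precomposing both sides with $\iota_{V}^{\ast}$ gives (\ref{eq_relatedwithiV}). So the content lies in the converse. There we know that $X \circ \phi^{\ast}$ and $\phi^{\ast} \circ Y$ agree on restrictions of global functions, and we must show they agree on all of $\C^{\infty}_{\cN}(V)$.

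The plan is to reduce to germs using locality and a bump-function argument. Fix $g \in \C^{\infty}_{\cN}(V)$ and set
\begin{equation}
h := X(\phi^{\ast}(g)) - \phi^{\ast}(Y(g)) \in \C^{\infty}_{\M}(U).
\end{equation}
Since $\C^{\infty}_{\M}$ is a sheaf, it suffices to show $h|_{W} = 0$ for an open cover $\{W\}$ of $U$.

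Fix $m \in U$ and put $n := \ul{\phi}(m) \in V$. Using the graded partition of unity (or simply an ordinary bump function on $N$ pulled into $\C^{\infty}_{\cN}(N)$ as a degree zero function), choose $V' \in \Op_{n}(V)$ and a degree zero $\chi \in \C^{\infty}_{\cN}(N)$ with $\operatorname{supp}\chi \subseteq V$ and $\chi|_{V'} = 1$. The product $\chi g$, extended by zero outside $V$, defines a global function $\tilde{g} \in \C^{\infty}_{\cN}(N)$ satisfying $\tilde{g}|_{V'} = g|_{V'}$.

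Next use locality of the operators involved. Vector fields are local operators: $X$ commutes with restrictions, since they form a sheaf $\X_{\M}$, and the same holds for $Y$. Pullbacks $\phi^{\ast}$ are natural with respect to restrictions. Put $W := \ul{\phi}^{-1}(V') \in \Op_{m}(U)$. Then
\begin{equation}
\begin{split}
h|_{W} &= X|_{W}\bigl(\phi^{\ast}(g)|_{W}\bigr) - \phi^{\ast}\bigl(Y(g)\bigr)|_{W} \\
&= X|_{W}\bigl(\phi^{\ast}(\tilde g|_{V})|_{W}\bigr) - \phi^{\ast}\bigl(Y(\tilde g|_{V})\bigr)|_{W} \\
&= \bigl(X\phi^{\ast}\iota_{V}^{\ast}\tilde g - \phi^{\ast}Y\iota_{V}^{\ast}\tilde g\bigr)\big|_{W} = 0 .
\end{split}
\end{equation}
The second equality holds because $\phi^{\ast}(g)|_{W} = (\phi|_{W})^{\ast}(g|_{V'}) = (\phi|_{W})^{\ast}(\tilde g|_{V'})$, and similarly $Y(g)|_{V'} = Y|_{V'}(g|_{V'}) = Y(\tilde g|_{V})|_{V'}$. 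The last equality is the hypothesis (\ref{eq_relatedwithiV}). Since the sets $W$ cover $U$, we conclude $h = 0$.

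The only step needing care is the existence of the cutoff $\chi$ and the verification that $\chi g$ extends by zero to a well-defined global section. This follows from the sheaf gluing axiom applied to the cover $\{V, N \setminus \operatorname{supp}\chi\}$. I expect this to be routine given the partition-of-unity results of \cite{Vysoky:2022gm}, with the main bookkeeping being the restrictions of $\phi$ and of the vector fields.
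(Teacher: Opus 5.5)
Your proposal is correct and is essentially the paper's argument: the paper just says the ``only if'' direction is obvious and the ``if'' direction follows from a partition of unity, and your cutoff $\chi$ argument, combined with locality of $X$ and $Y$ and naturality of $\phi^{\ast}$ under restrictions, is a fleshed-out version of exactly that. One point to tighten is the parenthetical alternative: an ordinary bump function on $N$ does not canonically give an element of $\C^{\infty}_{\cN}(N)$, since there is no canonical map $\cN \to N$. So $\chi$ should come from the graded partition of unity, or be built in a chart (body equal to the bump function, no purely graded terms) and extended by zero.
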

        \begin{proof}
        The only if direction is obvious. For the if direction, use a partition of unity. 
         \end{proof}
         \begin{lemma} \label{lem_invariancecomposedwiths't}
             Let $X,Y \in \X_{\M}(M)$ with $|X| = 0$, and let $\theta: (\M \times \R)|_{D} \rightarrow \M$ be the flow of $X$. Let $Y \otimes 1 \in \X_{\M \times \R}(D)$ be the canonical lift of $Y$. Then $Y \otimes 1 \sim_{\theta} Y$, iff 
             \begin{equation} \label{eq_Yinvariantpulledbys't}
                 \ds^{\ast}_{t} \circ (Y \otimes 1) \circ \theta^{\ast} = \ds^{\ast}_{t} \circ \theta^{\ast} \circ X
             \end{equation}
             for all $t \in \R$ with $D_{(t)} \neq \emptyset$, where $\ds_{t}: \M|_{D_{(t)}} \rightarrow (\M \times \R)|_{D}$ is the section valued at $t$. 
         \end{lemma}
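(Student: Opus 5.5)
The plan is to reduce everything to the following pointwise detection principle. A graded function on the flow domain $\D = (\M \times \R)|_{D}$ vanishes if and only if all its pullbacks along the sections $\ds_{t}$ vanish. (I read the right-hand side of (\ref{eq_Yinvariantpulledbys't}) as $\ds^{\ast}_{t} \circ \theta^{\ast} \circ Y$, in accordance with the definition (\ref{eq_invariantVF}) and the proof of Proposition \ref{tvrz_invariancefordegreezero}; the argument below does not depend on which vector field stands there.)

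The \emph{only if} direction is immediate. If $Y \otimes 1 \sim_{\theta} Y$, then $(Y \otimes 1) \circ \theta^{\ast} = \theta^{\ast} \circ Y$ holds as an equality of maps $\C^{\infty}_{\M}(M) \rightarrow \C^{\infty}_{\D}(D)$. Composing with $\ds^{\ast}_{t}$ for every $t$ with $D_{(t)} \neq \emptyset$ gives (\ref{eq_Yinvariantpulledbys't}).

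For the \emph{if} direction, fix $f \in \C^{\infty}_{\M}(M)$ and set
\begin{equation}
h := (Y \otimes 1)(\theta^{\ast}(f)) - \theta^{\ast}(Y(f)) \in \C^{\infty}_{\D}(D).
\end{equation}
By assumption, $\ds^{\ast}_{t}(h) = 0 \in \C^{\infty}_{\M}(D_{(t)})$ for all relevant $t$. It suffices to show $h = 0$, because $Y \otimes 1 \sim_{\theta} Y$ is precisely the statement $(Y \otimes 1) \circ \theta^{\ast} = \theta^{\ast} \circ Y$ on global functions.

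Since $\C^{\infty}_{\D}$ is a sheaf, it is enough to show $h|_{V \times I} = 0$ for a cover of $D$ by rectangles $V \times I \subseteq D$, where $V$ is the domain of a graded chart of $\M$ with coordinates $(x^{i},\xi^{\mu})$ and $I$ is an open interval. On such a rectangle we expand $h|_{V \times I} = \sum_{\fp} h_{\fp}(x,t)\, \xi^{\fp}$ with ordinary smooth coefficients $h_{\fp}$ on $V \times I$. For any $t_{1} \in I$ we have $V \subseteq D_{(t_{1})}$, and the local representative of $\ds_{t_{1}}$ is the section valued at $t_{1}$ from Subsection \ref{subsec_pivotal}. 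It pulls back $x^{i} \mapsto x^{i}$, $\xi^{\mu} \mapsto \xi^{\mu}$ and $t \mapsto t_{1}$, so
\begin{equation}
\ds^{\ast}_{t_{1}}(h)|_{V} = \sum_{\fp} h_{\fp}(x,t_{1})\, \xi^{\fp}.
\end{equation}
Here we use that restriction to $V$ commutes with $\ds^{\ast}_{t_{1}}$, by naturality of pullbacks under restrictions. The vanishing of this series for every $t_{1} \in I$ forces $h_{\fp}(x,t_{1}) = 0$ for all $x \in V$, all $t_{1} \in I$ and all $\fp$. Hence $h|_{V \times I} = 0$.

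The only step needing care, and the one I expect to be the main obstacle, is the bookkeeping of domains. One must check that every point of $D$ lies in such a chart rectangle; this holds because $D$ is open. One must also check that the local formula for $\ds_{t_{1}}$ on $V$ really is the coordinate section, with no hidden dependence on the global $D_{(t_{1})}$. Both are routine, and the same argument is essentially the one used in Lemma \ref{lem_twomus}.
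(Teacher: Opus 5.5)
Your proof is correct and follows essentially the paper's argument: you cover $D$ by chart rectangles $V \times I \subseteq D$ and use that $\ds^{\ast}_{t}$ merely evaluates the coefficient functions at $t$. Working with the single function $h$ for a global $f$ even spares you the target chart $(U,\varphi)$ that the paper introduces to compare local operators.

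You are right that the right-hand side should read $\ds^{\ast}_{t} \circ \theta^{\ast} \circ Y$; the paper's own proof uses $\hat{Y}$. Your parenthetical is slightly off, though. With $X$ there the stated equivalence is false: take $X = 0$, so $\theta = p_{1}$, and any $Y \neq 0$. So it does matter which field stands there, even though the detection step itself does not care.
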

         \begin{proof}
             The only if part is trivial. Hence suppose that (\ref{eq_Yinvariantpulledbys't}) holds for all $t \in \R$ with $D_{(t)} \neq \emptyset$. For each $(m_{0},t_{0}) \in D$, find $V \in \Op_{m_{0}}(M)$, $U \in \Op(M)$ and an open interval $I \in \Op_{t_{0}}(\R)$ such that $\theta_{0}(V \times I) \subseteq U$. We can assume that we have a local charts $(V,\psi)$ and $(U,\phi)$ for $\M$. Note that $\ul{\ds_{t_{0}}}(V) \subseteq V \times I \subseteq D$, hence $V \subseteq D_{(t_{0})}$. The equation (\ref{eq_Yinvariantpulledbys't}) then implies
             \begin{equation}
                 \hat{\ds}^{\ast}_{t} \circ (\hat{Y}' \otimes 1) \circ \hat{\theta}^{\ast} = \hat{\ds}^{\ast}_{t} \circ \hat{\theta}^{\ast} \circ \hat{Y},
             \end{equation}
             for all $t \in I$, where $\hat{\theta}: \hat{V}^{(n_{j})} \times I \rightarrow \hat{U}^{(n_{j})}$ is the local representative of $\theta$ and $\hat{Y} \in \X_{(n_{j})}(\hat{U})$ and $\hat{Y}'\in \X_{(n_{j})}(\hat{V})$ are the local representatives of the vector field Y. Finally $\hat{\ds}_{t}: \hat{V}^{(n_{j})} \rightarrow \hat{V}^{(n_{j})} \times I$ is the local representative of $\ds_{t}$. Since the corresponding pullback just evaluates the component functions at $t \in I$, this clearly implies $(\hat{Y}' \otimes 1) \circ \hat{\theta}^{\ast} = \hat{\theta}^{\ast} \circ \hat{Y}$. But this is just a local expression of the required equation. Since $(m_{0},t_{0}) \in D$ was arbitrary, this proves the claim.
         \end{proof}
         \subsection{Proof of theorem: degree zero} \label{subsec_invariance0}
         This is the proof of the complicated bit of Theorem \ref{thm_invariance}. In this subsection, we consider $X \in \X_{\M}(M)$ of degree zero. Suppose that $[X,Y] = 0$. We need to prove (\ref{eq_invariantVF}). 

        First, let us show that the claim is equivalent to a seemingly more complicated statement.
        \begin{lemma} \label{lem_invariantVFjinak}
            The equation (\ref{eq_invariantVF}) is equivalent to the equation
            \begin{equation} \label{eq_invariantVFjinak}
                \Delta_{-}^{\ast} \circ (\theta \times \1_{\R})^{\ast} \circ (Y \otimes 1) \circ \theta^{\ast} = p_{1}^{\ast} \circ Y,
            \end{equation}
            where $p_{1}: \D \rightarrow \M$ is the projection, and $\Delta_{-}: \D \rightarrow \D'$ is the restriction of the map (\ref{eq_Delta-}).
        \end{lemma}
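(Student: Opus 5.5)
The idea is to pass between the two equations by composing with the diffeomorphism-like map built from $\Delta_{-}$ and $\theta \times \1_{\R}$, using the second flow diagram to identify it with the projection $p_{1}$. Recall that $\Delta_{-} = (\1_{\M \times \R}, -\circ p_{2})$ maps $\D$ into $((\M\times\R)\times\R)|_{D'}$, as observed in the proof of Lemma \ref{lem_thetatisdiffeo}. Since $D^{\bullet} = D'$ for the maximal flow domain, we may evaluate the second diagram in (\ref{eq_flowdiagrams}) on the image of $\Delta_{-}$. We have $(\1_{\M}\times +)\circ\da\circ\Delta_{-} = \ds_{0}\circ p_{1}$ on $\D$: check this on projections, the $\R$-component being $t + (-t) = 0$. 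Hence
\begin{equation*}
\theta \circ (\theta \times \1_{\R}) \circ \Delta_{-} = \theta \circ \ds_{0} \circ p_{1} = p_{1},
\end{equation*}
that is $\Delta_{-}^{\ast}\circ(\theta\times\1_{\R})^{\ast}\circ\theta^{\ast} = p_{1}^{\ast}$.

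For the direction (\ref{eq_invariantVF}) $\Rightarrow$ (\ref{eq_invariantVFjinak}), write (\ref{eq_invariantVF}) as $(Y\otimes 1)\circ\theta^{\ast} = \theta^{\ast}\circ Y$. Applying $\Delta_{-}^{\ast}\circ(\theta\times\1_{\R})^{\ast}$ to both sides and using the displayed identity yields $p_{1}^{\ast}\circ Y$ on the right, which is (\ref{eq_invariantVFjinak}).

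For the converse, introduce the map $\Psi := (\theta\times\1_{\R})\circ\Delta_{-}: \D \to \D$. Its underlying map is $(m,t)\mapsto(\theta_{0}(m,t),-t)$, which lands in $D$ by (p2) of Theorem \ref{thm_fundamentalordinary}. The key claim is that $\Psi$ is an involutive graded diffeomorphism, $\Psi\circ\Psi = \1_{\D}$.
\begin{itemize}
\item The $\R$-component of $\Psi\circ\Psi$ is $t \mapsto -(-t) = t$.
\item The $\M$-component is $\theta\circ\Psi\circ\Psi = p_{1}\circ\Psi = \theta\circ(\theta\times\1_{\R})\circ\Delta_{-}$, which equals $p_{1}$ by the identity above.
\end{itemize}
Now rewrite (\ref{eq_invariantVFjinak}) as $\Psi^{\ast}\circ(Y\otimes 1)\circ\theta^{\ast} = p_{1}^{\ast}\circ Y$. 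Since $\theta\circ\Psi = p_{1}$, the right-hand side equals $\Psi^{\ast}\circ\theta^{\ast}\circ Y$. Because $\Psi^{\ast}$ is invertible (it is its own inverse), we can cancel it and obtain $(Y\otimes 1)\circ\theta^{\ast} = \theta^{\ast}\circ Y$, i.e. (\ref{eq_invariantVF}).

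The main obstacle is bookkeeping of domains and restrictions. We must verify that $\Delta_{-}(D)\subseteq D' = D^{\bullet}$, so that the flow diagram applies, and that $\Psi(D)\subseteq D$. Both reduce to (p2). In addition, the identification of a graded smooth map into a product by its components requires care because everything is restricted to open submanifolds.
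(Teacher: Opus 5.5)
Your proof is correct and is essentially the paper's own argument. Your $\Psi = (\theta\times\1_{\R})\circ\Delta_{-}$ is exactly the involution the paper uses: it squares to $\1_{\D}$ via $(\1_{\M}\times +)\circ\da\circ\Delta_{-} = \ds_{0}\circ p_{1}$ and the second flow diagram, and the target equation is then just the invariance equation composed with the invertible $\Psi^{\ast}$.

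One small slip: in the $\M$-component you swapped $p_{1}$ and $\theta$. The chain should read $p_{1}\circ\Psi\circ\Psi = \theta\circ\Psi = \theta\circ(\theta\times\1_{\R})\circ\Delta_{-} = p_{1}$, using $p_{1}\circ\Psi = \theta$.
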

        \begin{proof}
            Observe that $\ul{\Delta}_{-}(m,t) = ((m,t),-t)$, so if $(m,t) \in D$, then $(\theta_{0}(m,t),-t) \in D$, so $\Delta_{-}$ can indeed be viewed as a map from $\D$ to $\D'$. 

            Let us consider a graded smooth map $\phi := (\theta \times \1_{\R}) \circ \Delta_{-}: \D \rightarrow \D$. We claim that it satisfies the following identities:
            \begin{equation}
                p_{1} \circ \phi = \theta, \; \; p_{2} \circ \phi = - \circ p_{2}, \; \; \phi \circ \phi = \1_{\D},
            \end{equation}
            where $p_{2}: \D \rightarrow \R$ is the second projection. The first two properties follow immediately from the definition of $\phi$ and $\Delta_{-}$. For the last one, one finds 
            \begin{align}
                p_{1} \circ (\phi \circ \phi) = & \  \theta \circ \phi = \theta \circ (\theta \times \1_{\R}) \circ \Delta_{-} = \theta \circ (\1_{\M} \times +) \circ \da \circ \Delta_{-} = \theta \circ (\ds_{0} \circ p_{1}) = p_{1}, \\
                p_{2} \circ (\phi \circ \phi) = & \  (- \circ p_{2}) \circ \phi = (- \circ -) \circ p_{2} = \1_{\R} \circ p_{2} = p_{2},
            \end{align}
            where we have utilized (\ref{eq_flowdiagrams}) together with $(\1_{\M} \times +) \circ \da \circ \Delta_{-} = \ds_{0} \circ p_{1}$ and $- \circ - = \1_{\R}$. This proves that $\phi \circ \phi = \1_{\D}$. It is now easy to observe that (\ref{eq_invariantVFjinak}) is just (\ref{eq_invariantVF}) composed with $\phi^{\ast}$. Since this is an algebra isomorphism (it squares to the identity), the claim follows.  
        \end{proof}

        Let us now define an $\R$-linear map $\fM: \C^{\infty}_{\M}(M) \rightarrow \C^{\infty}_{\D}(D)$ by the formula
        \begin{equation} \label{eq_fMmap}
            \fM := \Delta_{-}^{\ast} \circ (\theta \times \1_{\R})^{\ast} \circ (Y \otimes 1) \circ \theta^{\ast} - p_{1}^{\ast} \circ Y.
        \end{equation}
        Thanks to the preceding lemma, we have to prove that $\fM = 0$, whenever $[X,Y] = 0$. We have to utilize the following technical statements:
         \begin{lemma} \label{lem_fMiszero}
             One has $\fM = 0$, if and only if $\fM$ satisfies the equations
             \begin{equation} \label{eq_Mvanishhessufficient}
                 (1 \otimes \partial_{t}) \circ \fM = 0, \; \; \ds_{0}^{\ast} \circ \fM = 0.
             \end{equation}
         \end{lemma}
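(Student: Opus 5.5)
The "only if" direction is immediate: if $\fM = 0$, both compositions in (\ref{eq_Mvanishhessufficient}) vanish. For the converse, the plan is a uniqueness argument for a trivial first order ODE in $t$, carried out locally on rectangles and propagated along $[0,t]$ using the flow-domain structure of $D$. This is the same argument as in Lemma \ref{lem_invariancecomposedwiths't} and Proposition \ref{tvrz_uniqueness}, but much simpler, because here the coefficient matrices vanish.

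Fix $f \in \C^{\infty}_{\M}(M)$ and put $h := \fM(f) \in \C^{\infty}_{\D}(D)$, a function on $(\M \times \R)|_{D}$. By assumption $(1 \otimes \partial_{t})(h) = 0$ and $\ds_{0}^{\ast}(h) = 0$, and we must show $h = 0$. Since $\C^{\infty}_{\D}$ is a sheaf, it suffices to show $h$ vanishes near each $(m,t) \in D$. Since $D^{(m)}$ is an open interval containing $0$, the segment $\{m\} \times [0,t]$ lies in $D$. By compactness we can choose a single graded chart $(V,\psi)$ with $V \in \Op_{m}(M)$ and an open interval $I \supseteq [0,t]$ such that $V \times I \subseteq D$. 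This is the only place where geometry of $D$ enters, and it is exactly as in Proposition \ref{tvrz_uniqueness}. No target charts are needed, since $h$ is simply a function on $\D$.

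On $V \times I$ the coordinates are $x^{i}, \xi^{\mu}, t$, and $h|_{V \times I} = \sum_{\fp} h_{\fp}(x,t)\, \xi^{\fp}$ with ordinary smooth coefficients. The vector field $1 \otimes \partial_{t}$ acts coefficientwise as $\partial_{t}$, because $t$ is a degree zero coordinate independent of the $\xi$'s. The pullback $\ds_{0}^{\ast}$ restricted to $V$ acts coefficientwise as evaluation at $t=0$. The hypotheses therefore become $\partial_{t} h_{\fp} = 0$ on $V \times I$ and $h_{\fp}(x,0) = 0$ for every $\fp$. Since $I$ is an interval containing $0$, each $h_{\fp}$ vanishes identically on $V \times I$, hence $h|_{V \times I} = 0$. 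As $(m,t)$ was arbitrary, $h = 0$, and since $f$ was arbitrary, $\fM = 0$.

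The only mildly delicate step is ensuring the local rectangle $V \times I$ contains the whole segment from $t=0$ to the given point. This is needed so that the initial condition at $t=0$ can be transported. It follows from openness of $D$, compactness of $[0,t]$, and the fact that each $D^{(m)}$ is an interval containing $0$: cover the segment by finitely many rectangles $V_{k} \times I_{k}$, intersect the $V_{k}$, and shrink to a chart. Notably, $\fM$ need not be a derivation or an algebra map; only its values $\fM(f)$ as functions matter.
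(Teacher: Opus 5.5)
Your proof is correct and follows the same strategy as the paper. You work in a graded chart on a rectangle $V \times I \subseteq D$ containing $\{m\} \times [0,t]$; there the hypotheses say that each coefficient $h_{\fp}$ of $\fM(f)$ is constant in $t$ and vanishes at $t=0$. Your version is also leaner: since $\fM(f)$ is just a function on $\D$, the target charts $U_{k}$ and the induction over the intervals $I_{k}$ that the paper borrows from Proposition \ref{tvrz_uniqueness} are unnecessary, and a single rectangle obtained from compactness and connectedness of $[0,t]$ suffices.
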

         \begin{proof}
             One only has to prove the if part. Hence suppose that $\fM$ satisfies (\ref{eq_Mvanishhessufficient}). Let us utilize the fact that $D$ is a flow domain for the flow $\theta_{0} = \ul{\theta}$. Let $(m,t) \in D$ be arbitrary. Let us now use the same argument and notation as in the proof of Proposition \ref{tvrz_uniqueness} to find a subdivision $0 = t_{0} < \cdots < t_{N} = t$ together with the the data (1) - (4), except $U_{k} \in \Op_{\theta_{0}(m,t_{k})}(M)$. Let $f \in \C^{\infty}_{\M}(M)$ be arbitrary. We will now use the induction in $k$ to prove that 
             \begin{equation}
                 \fM(f)|_{V \times I'_{k}} = 0,
             \end{equation}
             for all $k \in \{0,\dots,N\}$. Since $V \times I'_{N}$ is an open neighborhood of $(m,t)$ and this point in $D$ was arbitrary, this will prove the claim. 
             
             Let $k = 0$. By utilizing the coordinate charts $(V,\psi)$ and $(U_{0},\varphi_{0})$, the equation $\fM(f)|_{V \times I_{0}} = 0$ is equivalent to 
             \begin{equation}
                 \hat{\fM}(\hat{f}) = 0,
             \end{equation}
             where $\hat{\fM}$ is the ``local representative'' of $\fM$, and $\hat{f}$ is the local representative of $f$. Now, $\hat{\fM}(\hat{f})$ is just a formal power series, each coefficient a smooth function on $\hat{V} \times I_{0}$. The assumption (\ref{eq_Mvanishhessufficient}) implies that this function is constant in the extra $\R$ variable and vanishes on $\hat{V} \times \{0\}$, hence it vanishes on entire $\hat{V} \times I_{0}$.

             Let $k > 0$. The induction hypothesis is that $\fM(f)|_{V \times I'_{k-1}} = 0$. It thus suffices to prove that $\fM(f)|_{V \times I_{k}} = 0$. One uses the same argument and the local charts $(V,\psi)$ and $(U_{k},\varphi_{k})$, except one has to choose a point $t'_{0} \in I_{k-1} \cap I_{k}$, and all the coefficient functions of $\hat{\fM}(\hat{f})$ vanish on $\hat{V} \times \{t'_{0}\}$ by the induction hypothesis. This finishes the induction step.
         \end{proof}

        It remains to argue that $\fM$ defined by (\ref{eq_fMmap}) indeed satisfies (\ref{eq_Mvanishhessufficient}). First, observe that
        \begin{equation}
            (1 \otimes \partial_{t}) \circ \Delta_{-}^{\ast} = \Delta^{\ast}_{-} \circ \big( (1 \otimes \partial_{t}) \otimes 1 - 1_{\M \times \R} \otimes \partial_{t} \big).
        \end{equation}
        This is easy to verify on coordinate functions. Using the fact that $1 \otimes \partial_{t} \sim_{p_{1}} 0$, we thus get 
        \begin{equation}
            (1 \otimes \partial_{t}) \circ \fM = \Delta_{-}^{\ast} \circ \big( (1 \otimes \partial_{t}) \otimes 1 - 1_{\M \times \R} \otimes \partial_{t} \big) \circ (\theta \times \1_{\R})^{\ast} \circ (Y \otimes 1) \circ \theta^{\ast}. 
        \end{equation}
        As a next step, note that $(1 \otimes \partial_{t}) \otimes 1$ is $(\theta \times \1_{\R})$-related to $X \otimes 1$ and $1_{\M \times \R} \otimes \partial_{t}$ is $(\theta \times \1_{\R})$-related to $1 \otimes \partial_{t}$. We can thus write 
        \begin{equation}
            (1 \otimes \partial_{t}) \circ \fM = \Delta_{-}^{\ast} \circ (\theta \times \1_{\R})^{\ast} \circ \big ((X \otimes 1) - (1 \otimes \partial_{t}) \big) \circ (Y \otimes 1) \circ \theta^{\ast}.
        \end{equation}
        Next, observe that and $Y \otimes 1$ commutes with $1 \otimes \partial_{t}$, so 
        \begin{equation}
            (1 \otimes \partial_{t}) \circ \fM = \Delta_{-}^{\ast} \circ (\theta \times \1_{\R})^{\ast} \circ \big ((X  \otimes 1)  \circ (Y \otimes 1) - (Y \otimes 1) \circ (1 \otimes \partial_{t}) \big) \circ \theta^{\ast}
        \end{equation}
        Since $1 \otimes \partial_{t} \sim_{\theta} X$, one can use $(1 \otimes \partial_{t}) \circ \theta^{\ast} = \theta^{\ast} \circ X$. At this moment, utilize Proposition \ref{tvrz_Xisinvariantunderitsflow}. Indeed, as $X$ is invariant under $\theta$, we have $\theta^{\ast} \circ X = (X \otimes 1) \circ \theta^{\ast}$. Consequently, one finds
        \begin{equation}
        \begin{split}
            (1 \otimes \partial_{t}) \circ \fM =  & \ \Delta_{-}^{\ast} \circ (\theta \times \1_{\R})^{\ast} \circ ( [X \otimes 1, Y \otimes 1] \circ \theta^{\ast} \\
            = & \ \Delta_{-}^{\ast} \circ (\theta \times \1_{\R})^{\ast} \circ ( [X,Y] \otimes 1) \circ \theta^{\ast} = 0,
        \end{split}
        \end{equation}
        where we have used the assumption $[X,Y] = 0$. This proves the first equation in (\ref{eq_Mvanishhessufficient}). For the second one, observe that 
        \begin{equation}
        (\theta \times \1_{\R}) \circ \Delta_{-} \circ \ds_{0} = (\theta \times \1_{\R}) \circ (\ds_{0} \times \1_{\R}) \circ \ds_{0} = \1_{\D} \circ \ds_{0} = \ds_{0}. 
        \end{equation}
        Moreover, one has $Y \sim_{\ds_{0}} Y \otimes 1$ and $p_{1} \circ \ds_{0} = \1_{\M}$. Hence
        \begin{equation}
        \begin{split}
            \ds_{0}^{\ast} \circ \fM = & \  \ds_{0}^{\ast} \circ \Delta_{-}^{\ast} \circ (\theta \times \1_{\R})^{\ast} \circ (Y \otimes 1) \circ \theta^{\ast} - \ds_{0}^{\ast} \circ p_{1}^{\ast} \circ Y \\
            = & \ \ds_{0}^{\ast} \circ (Y \otimes 1) \circ \theta^{\ast} - Y \\
            = & \ Y \circ (\ds_{0}^{\ast} \circ \theta^{\ast}) - Y = 0,
        \end{split}
        \end{equation}
        where we have used the first diagram in (\ref{eq_flowdiagrams}). This proves the second equation in (\ref{eq_fMmap}). Hence $\fM = 0$ by Lemma \ref{lem_fMiszero}, and thus $Y$ is invariant under the flow $\theta$ of $X$ by definition of $\fM$ and Lemma \ref{lem_invariantVFjinak}. This finishes the proof.
        \subsection{Proof of theorem: nonzero degree} \label{subsec_invariance1}
 
        When $|X| \neq 0$, the proof of Theorem \ref{thm_invariance} becomes rather simple. Following the notation we put down in Appendix \ref{app_proofneq0}, we shall again consider separately the cases when $|X|$ is odd and when it is even nonzero.
        
        Let $|X|$ be odd and let $Y$ be an arbitrary vector field such that $[X,Y] = 0$. Let $\theta$ be the flow of $X$. Then for any $f \in \C^\infty_\M(M)$ one has
        \begin{equation}
            (Y \otimes 1)(\theta^\ast(f)) = (Y \otimes 1)\left(p_1^\ast(f) + p_1^\ast(X(f))\,\tau \right) = p_1^\ast(Y(f)) + p_1^\ast(X(Y(f))) \, \tau = \theta^\ast(Y(f)),
        \end{equation}
        in other words $Y \otimes 1 \sim_{\theta} Y$.

        Now let $|X|$ be even. Recall in particular the notation given in (\ref{eq_even_nozero_general_function}). First, we need a small lemma.
    \begin{lemma}\label{lemma_even_x_invariant_under_x}
    For any $Y \in \X_\M(M)$ and $h \in \C^\infty_\D(M)$ there is 
    \begin{equation}\label{eq_even_x_times_one}
        (Y \otimes 1)(h) = \sum_{\ell = 0}^\infty \frac{1}{\ell!}\,p_1^\ast(Y(h_\ell))\, \tau^\ell.
    \end{equation}
\end{lemma}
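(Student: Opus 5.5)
The plan is to compare component functions in the sense of (\ref{eq_even_component_functions}). Both sides of (\ref{eq_even_x_times_one}) are functions on $\D$. By the uniqueness of the decomposition (\ref{eq_even_nozero_general_function}), it suffices to show that for every $\ell \in \N_{0}$
\begin{equation*}
\big((Y \otimes 1)(h)\big)_{\ell} = Y(h_{\ell}),
\end{equation*}
that is,
\begin{equation*}
\ds_{0}^{\ast} \circ (1 \otimes \partial_{\tau})^{\ell} \circ (Y \otimes 1) = Y \circ \ds_{0}^{\ast} \circ (1 \otimes \partial_{\tau})^{\ell}
\end{equation*}
as maps $\C^{\infty}_{\D}(M) \rightarrow \C^{\infty}_{\M}(M)$.

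I will establish two commutation facts.

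First, $[1 \otimes \partial_{\tau}, Y \otimes 1] = 0$. The graded commutator of lifts from different factors vanishes. This is checked on the coordinates $x^{i}, \xi^{\mu}, \tau$ of $\D|_{V}$: both vector fields have coefficients independent of the other factor's coordinates. Since $|\partial_{\tau}| = k$ is even, the graded commutator is the ordinary one, so $(1\otimes\partial_\tau)^{\ell}$ commutes with $Y\otimes 1$ with no signs.

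Second, $Y \sim_{\ds_{0}} Y \otimes 1$, i.e. $\ds_{0}^{\ast} \circ (Y \otimes 1) = Y \circ \ds_{0}^{\ast}$. This was already used in Appendix \ref{subsec_invariance0}. It follows from $p_{1} \circ \ds_{0} = \1_{\M}$ together with $Y\otimes 1 \sim_{p_1} Y$ and $Y\otimes 1\sim_{p_2} 0$, since $\ds_{0}^{\ast}(\tau) = 0$. Again it can be verified on coordinates.

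Combining the two facts,
\begin{equation*}
\ds_{0}^{\ast} \circ (1 \otimes \partial_{\tau})^{\ell} \circ (Y \otimes 1) = \ds_{0}^{\ast} \circ (Y \otimes 1) \circ (1 \otimes \partial_{\tau})^{\ell} = Y \circ \ds_{0}^{\ast} \circ (1 \otimes \partial_{\tau})^{\ell},
\end{equation*}
and applying this to $h$ gives the claim.

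The main subtlety is that (\ref{eq_even_nozero_general_function}) is only a formal notation defined through local coordinate expansions. So the identity has to be read as equality of components $h_{\ell}$ on each coordinate subset $V$. This is fine because $Y$, $Y \otimes 1$, $\partial_\tau$ and $\ds_0^\ast$ all commute with restrictions. For $Y$ this uses the sheaf property of $\X_{\M}$, which requires passing from $Y$ to $Y|_{V}$.
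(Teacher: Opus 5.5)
Your proof is correct, but it takes a different route from the paper's. The paper defines $Z(h)$ as the right-hand side of (\ref{eq_even_x_times_one}). It then uses the product formula of Lemma \ref{lemma_even_multiplying} to check that $Z$ satisfies the graded Leibniz rule, so that $Z$ is a vector field on $\D$. Finally it verifies $Z \sim_{p_1} Y$ and $Z \sim_{p_2} 0$ and concludes $Z = Y \otimes 1$ from the uniqueness of the lift.

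You instead compute the components of $(Y \otimes 1)(h)$ directly from the extraction formula (\ref{eq_even_component_functions}). You use two standard properties of the lift: $[1 \otimes \partial_\tau, Y \otimes 1] = 0$, which is an honest commutation because $k$ is even, and $\ds_0^\ast \circ (Y \otimes 1) = Y \circ \ds_0^\ast$.

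Your route is shorter. It needs neither Lemma \ref{lemma_even_multiplying} nor the Leibniz verification with its parity bookkeeping. (In that bookkeeping, the paper's remark that $|\tau|$ is odd should read even; that is why $|h_{\ell-r}| \equiv |h| \bmod 2$.) It also matches the component-by-component style the paper itself uses in (\ref{eq_even_theta_components}) and in the verification of the second flow diagram.

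The cost is that both facts you invoke are used as black boxes. Each is itself a consequence of characterizing $Y \otimes 1$ by its relatedness to the projections, checked on the coordinates $x^i, \xi^\mu, \tau$. The paper's proof appeals only to that characterization directly. In the end both arguments rest on the same principle: a derivation, or a derivation along a map, on $\D|_V$ is determined by its values on coordinates.
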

\begin{proof}
   Denote the right hand side of (\ref{eq_even_x_times_one}) as $Z(h)$. Note that $[Z(h)]_\ell = Y(h_\ell)$, so $Z$ is a well-defined linear map of degree $|Z| = |Y|$. For any two $h,h^\prime \in \C^\infty_\D(M)$ one has
   \begin{equation}
   \begin{split}
       [Z(hh^\prime)]_\ell &= Y\left((hh^\prime)_\ell\right) = Y \left( \sum_{r = 0}^\ell \binom{\ell}{r} h_{\ell - r} h_r^\prime \right) \\
       &= \sum_{r = 0}^\ell\binom{\ell}{r} \left(Y(h_{\ell-r}) \, h^\prime_{r} + (-1)^{|Y||h_{\ell-r}|} h_{\ell - r} \, Y(h^\prime_\ell) \right) \\
       &= [Z(h)h^\prime + (-1)^{|Z||h|}h Z(h^\prime)]_\ell.
       \end{split}
   \end{equation}
    In the last step we used that $|Z| = |Y|$ and $|h_{\ell - r}| = |h| - (\ell - r)|\tau| = |h| \mod2$, since $|\tau|$ is odd. This shows $Z$ to be a vector field. 
    
    Let $h = p_1^\ast(f)$ for some $f \in \C^\infty_\M(M)$. Then $h_0 = f$ and $h_\ell = 0$ for all $\ell \geq 1$, and so $Z(p_1^\ast(f)) = p_1^\ast(Y(f))$. Similarly, let $h = p_2^\ast(g)$ for some graded function $g$ on $\R[-k]$. Then $h_\ell$ is a constant for every $\ell$, and so $Z(p_2^\ast(f)) = 0$. We have just shown that $Z \sim_{p_1} Y$ and $Z \sim_{p_2}0$, hence $Z = Y \otimes 1$. This proves (\ref{eq_even_x_times_one}).
\end{proof}
        Now let $Y$ be a vector field such that $[X,Y] = 0$. Then for any $f \in \C^\infty_\M(M)$ we can write
        \begin{equation}
            [\left((Y\otimes 1)\circ \theta^\ast\right)(f)]_\ell = Y(X^\ell(f)) = X^\ell(Y(f)) = [\theta^\ast(Y(f))]_\ell,
        \end{equation}
        where the first equality follows from Lemma \ref{lemma_even_x_invariant_under_x} and (\ref{eq_even_theta_components}). This shows $Y \otimes 1 \sim_\theta Y$.

        \section{Commuting flows: proofs} \label{app_commuting}
        \subsection{Facts about commuting domains} \label{subsec_factscommuting}
        This is a subsection dedicated to the proof of Lemma \ref{lem_commuting}. To simplify the notation, let us write 
        \begin{equation} \label{eq_D'12}
            D'_{12} := D'_{1} \cap \sigma^{-1}(D'_{2}). 
        \end{equation}

        Let us start by proving $(i)$. Let $m \in M$ be arbitrary. Clearly $((m,0),0) \in D'_{12}$. Since this set is open, there exists $U_{m} \in \Op_{m}(M)$ and open intervals $I_{m},J_{m} \in \Op_{0}(\R)$, such that $(U_{m} \times I_{m}) \times J_{m} \in D'_{12}$. Note that $M = \bigcup_{m \in M} U_{m}$. Let
        \begin{equation}
            \bbD := \bigcup_{m \in M} (U_{m} \times I_{m}) \times J_{m}.
        \end{equation}
        This is an open subset of $D'_{12}$. We must prove that it has the property (2) of Definition \ref{def_commutingdomain}. For each fixed $m \in M$, one has
        \begin{equation}
            \bbD^{(m)} = \bigcup_{\{ m' \mid m \in U_{m'} \}} I_{m'} \times J_{m'}.
        \end{equation}
        Since this is a union of rectangles containing $(0,0)$, a set certainly having the required property. Hence $\bbD$ is a commuting domain of $X$ and $Y$. 

        Let us continue with $(ii)$. Let $\bbD$ and $\bbD'$ be a pair of commuting domains of $X$ and $Y$. Clearly
        \begin{equation}
            (\bbD \cup \bbD')^{(m)} = \bbD^{(m)} \cup \bbD'^{(m)}, \; \; (\bbD \cap \bbD')^{(m)} = \bbD^{(m)} \cap \bbD'^{(m)},
        \end{equation}
        for all $m \in M$. But if $\bbD^{(m)}$ and $\bbD'^{(m)}$ have the property (2) of Definition \ref{def_commutingdomain}, then so does their union and intersection. The property (1) is obvious. The generalization of this statement to arbitrary finite intersections and arbitrary unions is immediate. 

        To prove $(iii)$, simply take the union of all commuting domains of $X$ on $Y$. By part $(ii)$, this is a commuting domain of $X$ and $Y$, and its maximality is obvious. The claim $(iv)$ is trivial. 

        To prove $(v)$, let $m \in M$ and suppose that there are open intervals $I,J \in \Op_{0}(\R)$ such that $((m,s),t) \in D'_{12}$ for all $(s,t) \in I \times J$. We must argue that $I \times J \subseteq \bbD^{(m)}$, where $\bbD$ is the maximal commuting domain of $X$ and $Y$. Pick $(s_{0},t_{0}) \in I \times J$. 
        
        The idea of the proof is the following: we shall find an open rectangle $I_{0} \times J_{0}$ containing both $(0,0)$ and $(s_{0},t_{0})$, and $U \in \Op_{m}(M)$, such that 
        \begin{equation}
        S := (U \times I_{0}) \times J_{0} \subseteq D'_{12}.
        \end{equation}
        It follows that $\bbD' := \bbD \cup S$ is a commuting domain. Since $\bbD$ is maximal, this forces $S \subseteq \bbD$ and thus $(s_{0},t_{0}) \in \bbD^{(m)}$, thus completing the proof.

        Let $R \subseteq I \times J$ denote the closed rectangle spanning from $(0,0)$ to $(s_{0},t_{0})$. For each $(s,t) \in R$, we have $((m,s),t) \in D'_{12}$ by assumption. Since this is an open subset, there exists $U' \in \Op_{m}(M)$ and open intervals $I' \in \Op_{s}(\R)$ and $J' \in \Op_{t}(\R)$, such that $(U' \times I') \times J' \subseteq D'_{12}$. In particular, we have $(s,t)$ in the open rectangle $I' \times J'$. In this way, we obtain a collection $\{ U_{\alpha} \}_{\alpha \in K}$ of open neighborhoods of $m$, and and open cover $\{ I_{\alpha} \times J_{\alpha} \}_{\alpha \in K}$ of $R$, such that $(U_{\alpha} \times I_{\alpha}) \times J_{\alpha} \subseteq D'_{12}$. Since $R$ is compact, there is a finite subset $K_{0} \subseteq K$, such that $\{ I_{\alpha} \times J_{\alpha} \}_{\alpha \in K_{0}}$ still covers $R$. Let $U := \bigcap_{\alpha \in K_{0}} U_{\alpha}$. Finally, there is certainly an open rectangle $I_{0} \times J_{0}$, such that 
        \begin{equation}
            R \subseteq I_{0} \times J_{0} \subseteq \bigcup_{\alpha \in K_{0}} I_{\alpha} \times J_{\alpha}.
        \end{equation}
        To see this, look at the following figure:
        \begin{center}
            \includegraphics[scale=1]{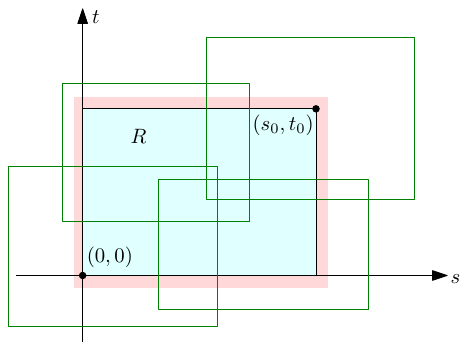}
        \end{center}
        The rectangle $R$ is in cyan, the finitely many covering rectangles are green, and the open rectangle $I_{0} \times J_{0}$ is pink. It follows from the construction that $S := (U \times I_{0}) \times J_{0} \subseteq D'_{12}$.

        Finally, The proof of $(vi)$ is somewhat similar. For any $((m,s),t) \in \bbD$, form a closed rectangle $R$ spanning from $(0,0)$ to $(s,t)$. Since $\bbD$ is a commuting domain, every point $(s',t') \in R$ satisfies $((m,s'),t') \in \bbD$. Since this is an open subset, we can use the same arguments as above to construct $U \in \Op_{m}(M)$ and open intervals $I,J \in \Op_{0}(\R)$ satisfying $(U \times I) \times J \subseteq \bbD$. 
        \subsection{Proof of theorem: twice degree zero} \label{subsec_proofcommuting1}
        Let us assume that $X,Y \in \X_{\M}(M)$ are both of degree zero. We want to prove Theorem \ref{thm_commuting}. 

        Let us first assume that $[X,Y] = 0$. Let $\bbD$ be the maximal commuting domain of the vector fields $X_{0}$ and $Y_{0}$. It exists and it is unique by Lemma \ref{lem_commuting}-$(iii)$. It follows immediately from Lemma \ref{lem_underlying} that $[X_{0},Y_{0}] = 0$. Hence by Theorem \ref{thm_commutingordinary}, their flows $\theta_{0}^{X}: D_{1} \rightarrow M$ and $\theta_{0}^{Y}: D_{2} \rightarrow M$ commute on $\bbD$. This means that on the level of underlying smooth maps, the diagram (\ref{eq_commutingdiagram}) commutes.

        Fix $((m_{0},s_{0}),t_{0}) \in \bbD$. By Lemma \ref{lem_commuting}-$(vi)$, we can find $U \in \Op_{m_{0}}(M)$ and open intervals $I,J \in \Op_{0}(\R)$, such that $((m_{0},s_{0}),t_{0}) \in (U \times I) \times J \subseteq \bbD$. We will prove that the restriction of the diagram (\ref{eq_commutingdiagram}) to $(U \times I) \times J$ commutes. Since $((m_{0},s_{0}),t_{0}) \in \bbD$ is arbitrary, this will be enough to conclude that $\theta^{X}$ and $\theta^{Y}$ commute on the maximal commuting domain $\sfD = ((\M \times \R) \times \R)|_{\bbD}$. 

        The proof is a compilation of several tricks already used in this paper, hence we will a bit sloppy with the notation. First, it suffices to prove that the compositions of both paths along the diagram with the map $(\ds_{s} \times \1_{\R})|_{U \times J}$, where $\ds_{s}: \M \rightarrow \M \times \R$ is a section valued at $s$,  commutes for each $s \in I$. This is an analogue of Lemma \ref{lem_twomus}. It is not difficult to argue that this boils down to proving the equation
        \begin{equation} \label{eq_commutinggdiagramateachs}
        \iota_{(-s)} \circ \theta^{X}_{(s)} \circ \theta^{Y}|_{U \times J} = \theta^{Y} \circ ((\iota_{(-s)} \circ \theta^{X}_{(s)}) \times \1_{\R})|_{U \times J},
        \end{equation}
        for all $s \in I$. $\theta_{(s)}: \M|_{D_{(s)}} \rightarrow \M|_{D_{(-s)}}$ is the map (\ref{eq_thetatmap}). We invite the reader to check that the composition is well-defined, e.g. note that $U \subseteq D_{(s)}$.
        
        Let $\mu_{(s)}$ and $\mu'_{(s)}$ denote the graded smooth maps from $(\M \times \R)|_{U \times J}$ to $\M$ corresponding to the left-hand side and to the right-hand side of (\ref{eq_commutinggdiagramateachs}), respectively. It turns out they satisfy the same differential equation.

        \begin{lemma}
            For each $s \in I$, the map $\mu_{(s)}$ satisfies the equations 
            \begin{equation}
                (1 \otimes \partial_{t}) \circ \mu_{(s)}^{\ast} = \mu_{(s)}^{\ast} \circ Y, \; \; \mu_{(s)} \circ \ds_{0} = \iota_{(-s)} \circ \theta_{(s)}^{X}|_{U},
            \end{equation}
            where $\ds_{0}: \M|_{U} \rightarrow (\M \times \R)|_{U \times J}$ is the zero section. 
            
            The same equations are true when $\mu_{(s)}$ is replaced by $\mu'_{(s)}$. 
        \end{lemma}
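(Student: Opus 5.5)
We verify both equations for $\mu_{(s)}$ and $\mu'_{(s)}$ by composing relations of vector fields along the factorizations of the two maps. Throughout we use $[X,Y]=0$, which is the standing assumption of this part of the proof. The only genuinely delicate point is checking that every composition is defined on the relevant open sets. For this we use that $(U \times I) \times J \subseteq \bbD \subseteq D'_{1} \cap \sigma^{-1}(D'_{2})$.

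\emph{Well-definedness.} For $m \in U$, $s \in I$ and $t \in J$ we have $((m,s),t) \in \sigma^{-1}(D'_{2})$, so $(\theta^{Y}_{0}(m,t),s) \in D_{1}$. Hence $\ul{\theta^{Y}}(U \times J) \subseteq (D_{1})_{(s)}$, and $\theta^{X}_{(s)} \circ \theta^{Y}|_{U \times J}$ makes sense. Similarly, $((m,s),t) \in D'_{1}$ gives $(\theta^{X}_{0}(m,s),t) \in D_{2}$. Therefore $(\iota_{(-s)} \circ \theta^{X}_{(s)}) \times \1_{\R}$ maps $(\M \times \R)|_{U \times J}$ into $\D_{2}$, and $\mu'_{(s)}$ is defined as well.

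\emph{Equations for $\mu_{(s)}$.} By definition of the flow, $1 \otimes \partial_{t} \sim_{\theta^{Y}} Y$, and this restricts to $U \times J$ with target $\M|_{(D_{1})_{(s)}}$; Lemma \ref{lem_relatedwithiV} handles passing between $Y$ and its restrictions. Since $[X,Y]=0$, Theorem \ref{thm_invariance} shows that $Y$ is invariant under $\theta^{X}$. Proposition \ref{tvrz_invariancefordegreezero} then gives $Y|_{D_{(s)}} \sim_{\theta^{X}_{(s)}} Y|_{D_{(-s)}}$. Finally $Y|_{D_{(-s)}} \sim_{\iota_{(-s)}} Y$. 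Relatedness composes, so $1 \otimes \partial_{t} \sim_{\mu_{(s)}} Y$, which is the first equation. For the initial condition, the first diagram in (\ref{eq_flowdiagrams}) for $\theta^{Y}$ gives $\theta^{Y} \circ \ds_{0} = \1$ on $\M|_{U}$. Hence $\mu_{(s)} \circ \ds_{0} = \iota_{(-s)} \circ \theta^{X}_{(s)}|_{U}$.

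\emph{Equations for $\mu'_{(s)}$.} Write $\varphi := \iota_{(-s)} \circ \theta^{X}_{(s)}|_{U}$. The map $\varphi \times \1_{\R}$ is the identity in the $\R$-factor, so $1 \otimes \partial_{t} \sim_{\varphi \times \1_{\R}} 1 \otimes \partial_{t}$. This is checked on $p_{1}^{\ast}$- and $p_{2}^{\ast}$-pullbacks, using $1 \otimes \partial_{t} \sim_{p_{1}} 0$ and $1 \otimes \partial_{t} \sim_{p_{2}} \partial_{t}$. Composing with $1 \otimes \partial_{t} \sim_{\theta^{Y}} Y$ gives the first equation. For the initial condition, $(\varphi \times \1_{\R}) \circ \ds_{0} = \ds_{0} \circ \varphi$, which is checked by composing with both projections. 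Combined with $\theta^{Y} \circ \ds_{0} = \1_{\M}$, this yields $\mu'_{(s)} \circ \ds_{0} = \varphi$.

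The main obstacle is the invariance step for $\mu_{(s)}$. It relies on the time-$s$ diffeomorphisms $\theta^{X}_{(s)}$ respecting $Y$, and hence on the full force of Theorem \ref{thm_invariance} in degree zero. Everything else is bookkeeping of restrictions.

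Since $\mu_{(s)}$ and $\mu'_{(s)}$ also share the underlying map (by the ordinary commuting of flows on $\bbD$), Lemma \ref{lem_thetathetaprimesameequationsoffsetgeneral} then forces $\mu_{(s)} = \mu'_{(s)}$.
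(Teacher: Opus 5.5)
Your proof is correct and follows the paper's own route. For $\mu_{(s)}$ you chain $1\otimes\partial_t \sim_{\theta^Y} Y$ with $Y|_{D_{(s)}} \sim_{\theta^X_{(s)}} Y|_{D_{(-s)}}$, the latter obtained from Theorem \ref{thm_invariance} via Proposition \ref{tvrz_invariancefordegreezero}; for $\mu'_{(s)}$ you use that $\theta^X_{(s)}\times\1_{\R}$ preserves $1\otimes\partial_t$ and intertwines the zero sections, and your explicit check that all compositions are defined on $(U\times I)\times J\subseteq D'_1\cap\sigma^{-1}(D'_2)$ fills in the step the paper leaves to the reader.
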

        \begin{proof}
            Since $[X,Y] = 0$, we know from Theorem \ref{thm_invariance} that that $Y$ is invariant under the flow $\theta^{X}$. In particular, we can utilize Proposition \ref{tvrz_invariancefordegreezero} and equation (\ref{eq_Yinvarethetatrelated}). Let us restrain from explicit writing of domains and restrictions of pullbacks.  One finds
            \begin{equation}
                (1 \otimes \partial_{t}) \circ \mu_{(s)}^{\ast} = (1 \otimes \partial_{t}) \circ (\theta^{Y})^{\ast} \circ (\theta_{(s)}^{X})^{\ast} = (\theta^{Y})^{\ast} \circ Y \circ (\theta^{X}_{(s)})^{\ast} = (\theta^{Y})^{\ast} \circ (\theta^{X}_{(s)})^{\ast} \circ Y  = \mu_{(s)}^{\ast} \circ Y.
            \end{equation}
            Utilizing the first diagram in (\ref{eq_flowdiagrams}), we get
            \begin{equation}
                \mu_{(s)} \circ \ds_{0} = \theta_{(s)}^{X} \circ \theta^{Y} \circ \ds_{0} = \theta_{(s)}^{X}.
            \end{equation}
            This proves the claims about $\mu_{(s)}$. For the other map, one has 
            \begin{equation}
            \begin{split}
                (1 \otimes \partial_{t}) \circ \mu'^{\ast}_{(s)} = & \ (1 \otimes \partial_{t}) \circ (\theta_{(s)}^{X} \times \1_{\R})^{\ast} \circ (\theta^{Y})^{\ast} = (\theta_{(s)}^{X} \times \1_{\R})^{\ast} \circ (1 \otimes \partial_{t}) \circ (\theta^{Y})^{\ast} \\
                = & \ (\theta_{(s)}^{X} \times \1_{\R})^{\ast} \circ (\theta^{Y})^{\ast} \circ Y = \mu'^{\ast}_{(s)} \circ Y.
            \end{split}
            \end{equation}
            Finally, since $(\theta_{(s)}^{X} \times \1_{\R}) \circ \ds_{0} = \ds_{0} \circ \theta_{(s)}^{X}$, we can use the first diagram in (\ref{eq_flowdiagrams}) to get 
            \begin{equation}
                \mu'_{(s)} \circ \ds_{0} = \theta^{Y} \circ (\theta^{X}_{(s)} \times \1_{\R}) \circ \ds_{0} = \theta^{Y} \circ \ds_{0} \circ \theta_{(s)}^{X} = \theta_{(s)}^{X}.
            \end{equation}
            This proves the claims about $\mu'_{(s)}$. 
        \end{proof}
        To prove the equation (\ref{eq_commutinggdiagramateachs}) it now suffices to invoke an analogue of Lemma \ref{lem_thetathetaprimesameequationsoffsetgeneral}. 
        
        Indeed, we have proved that the two maps $\mu_{(s)}$ and $\mu'_{(s)}$, defined on $(\M \times \R)|_{U \times J}$ and having the same underlying smooth map, both relate the vector fields $1 \otimes \partial_{t} \in \X_{\M \times \R}(U \times J)$ and $Y \in \X_{\M}(M)$. Moreover, $\mu_{(s)} \circ \ds_{0} = \mu'_{(s)} \circ \ds_{0} = \iota_{(-s)} \circ \theta_{(s)}^{X}|_{U}$. Since $U \times J$ is a flow domain, this implies
        \begin{equation}
            \mu_{(s)} = \mu'_{(s)},
        \end{equation}
        that is precisely (\ref{eq_commutinggdiagramateachs}). Since $s \in I$ was arbitrary, this finishes the proof of the fact that $\theta^{X}$ and $\theta^{Y}$ commute on the maximal commuting domain $\sfD$. 

        Note that the arguments used in the previous paragraph would fail if we would try to prove that $\theta^{X}$ and $\theta^{Y}$ commute on $((\M \times \R) \times \R)|_{D'_{12}}$, where $D'_{12}$ is the open set (\ref{eq_D'12}) of all points where the equation $\theta^{Y}_{0}( \theta^{X}_{0}(m,s),t) = \theta^{X}_{0}( \theta^{Y}_{0}(m,t),s)$ makes sense. This is because that for $D'_{12}$, the existence of $U$ and open intervals $I$ and $J$ as in Lemma \ref{lem_commuting}-$(vi)$ is not guaranteed. 

        Conversely, suppose that we have degree zero $X,Y \in \X_{\M}(M)$, such that $\theta^{X}$ and $\theta^{Y}$ commute on the maximal commuting domain $\sfD = ((\M \times \R) \times \R)|_{\bbD}$, where $\bbD$ is the maximal commuting domain of the underlying vector fields $X_{0}$ and $Y_{0}$. We must argue that $[X,Y] = 0$.  

        Fix $m \in M$. Since $((m,0),0) \in D'_{12}$, where $D'_{12}$ denotes the open subset (\ref{eq_D'12}), it follows from Lemma \ref{lem_commuting}-$(v)$ that $((m,0),0) \in \bbD$. By Lemma \ref{lem_commuting}-$(vi)$, there exists $U \in \Op_{m}(M)$ and open intervals $I,J \in \Op_{0}(\R)$, such that $(U \times I) \times J \subseteq \bbD$. The idea is the following. We assume that (\ref{eq_commutingdiagram}) commutes. Restrict the both maps forming the two paths through the diagram to the open subset $(U \times I) \times J$. We obtain the following equation of the pullback maps:
        \begin{equation}
            \sigma^{\ast} \circ (\theta^{Y} \times \1_{\R})^{\ast} \circ (\theta^{X})^{\ast} = (\theta^{X} \times \1_{\R})^{\ast} \circ (\theta^{Y})^{\ast},
        \end{equation}
        where we again do not explicitly write domains of pullbacks and restrictions to open subsets. Both sides are $\R$-linear maps mapping into $\C^{\infty}_{\sfD}((U \times I) \times J)$. Act on both sides by the vector field $1_{\M \times \R} \otimes \partial_{t} \in \X_{\sfD}((U \times I) \times J)$ and pull the result back by the zero section $\ol{\ds}_{0}: (\M \times \R)|_{U \times I} \rightarrow \sfD|_{(U \times I) \times J}$. 
        
        For the left-hand side, notice that $1_{\M \times \R} \otimes \partial_{t}$ is $\sigma$-related to $(1 \otimes \partial_{t}) \otimes 1 \in \X_{\sfD'}((U \times J) \times I)$, and since $1 \otimes \partial_{t}$ is $\theta^{Y}$-related to $Y$ by the definition of the flow, this vector field is $(\theta^{Y} \times \1_{\R})$-related to $Y \otimes 1 \in \X_{\D_{1}}(U \times I)$. We thus obtain 
        \begin{equation}
        \begin{split}
        (1_{\M \times \R} \otimes \partial_{t}) \circ \sigma^{\ast} \circ (\theta^{Y} \times \1_{\R})^{\ast} \circ (\theta^{X})^{\ast} = & \ \sigma^{\ast} \circ ((1 \otimes \partial_{t}) \otimes 1) \circ (\theta^{Y} \times \1_{\R})^{\ast} \circ (\theta^{X})^{\ast} \\
        = & \ \sigma^{\ast} \circ (\theta^{Y} \times \1_{\R})^{\ast} \circ (Y \otimes 1) \circ (\theta^{X})^{\ast}.
        \end{split}
        \end{equation}
        To pull this back by $\ol{\ds}_{0}^{\ast}$, observe that $\sigma \circ \ol{\ds}_{0} = (\ds_{0} \times \1_{\R})|_{U \times I}$. Utilizing (\ref{eq_flowdiagrams}), we find
        \begin{equation}
            \ol{\ds}_{0}^{\ast} \circ (1_{\M \times \R} \otimes \partial_{t}) \circ \sigma^{\ast} \circ (\theta^{Y} \times \1_{\R})^{\ast} \circ (\theta^{X})^{\ast} = (Y \otimes 1) \circ (\theta^{X})^{\ast}. 
        \end{equation}
        For the right-hand side, one uses the fact that $1_{\M \times \R} \otimes \partial_{t}$ and $1 \otimes \partial_{t}$ are $(\theta^{X} \times \1_{\R})$-related, hence 
        \begin{equation}
            (1_{\M \times \R} \otimes \partial_{t}) \circ (\theta^{X} \times \1_{\R})^{\ast} \circ (\theta^{Y})^{\ast} = (\theta^{X} \times \1_{\R})^{\ast} \circ (1 \otimes \partial_{t}) \circ (\theta^{Y})^{\ast} = (\theta^{X} \times \1_{\R})^{\ast} \circ (\theta^{Y})^{\ast} \circ Y,
        \end{equation}
        where the second equality uses the fact that $\theta^{Y}$ is the flow of $Y$. Pulling this back by $\ol{\ds}_{0}$ and using the first diagram in (\ref{eq_flowdiagrams}) for $\theta^{X}$, one obtains
        \begin{equation}
            \begin{split}
            \ol{\ds}_{0}^{\ast} \circ (1_{\M \times \R} \otimes \partial_{t}) \circ (\theta^{X} \times \1_{\R})^{\ast} \circ (\theta^{Y})^{\ast} = & \  \ol{\ds}_{0}^{\ast} \circ (\theta^{X} \times \1_{\R})^{\ast} \circ (\theta^{Y})^{\ast} \circ Y \\
            = & \ (\theta^{X})^{\ast} \circ \ds_{0}^{\ast} \circ (\theta^{Y})^{\ast} \circ Y = (\theta^{X})^{\ast} \circ Y. 
            \end{split}
        \end{equation}
        We have thus just proved the equation $(Y \otimes 1) \circ (\theta^{X})^{\ast} = (\theta^{X})^{\ast} \circ Y$. If we would now remember the restriction morphisms we have intentionally ignored, we would have just proved that the restriction of the vector field $Y \otimes 1$ to $U \times I$ is $\theta^{X}|_{U \times I}$-related to $Y$. But this immediately implies that $[X,Y] = 0$, using the same reasoning as in the easy part of the proof of Theorem \ref{thm_invariance}. 
        \subsection{Proof of theorem: at least one degree is non-zero} \label{subsec_proofcommuting2}
        Let $X,Y \in \X_{\M}(M)$ be two vector fields, where at least one of them has a non-zero degree. We want to prove Theorem \ref{thm_commuting} in this case.

        First, suppose that $Y$ is of a degree $\ell \neq 0$. To compare the results of pullbacks by both paths in (\ref{eq_commutingdiagram}), it suffices to compare them when composed with $\ol{\ds}_{0}^{\ast} \circ (1_{\M \times \R} \otimes \partial_{\tau'})^{r}$, where $\ol{\ds}_{0}: \D_{1} \rightarrow \sfD$ is the zero section, $\tau'$ is the coordinate function on $\R[-\ell]$ and $r \in \N_{0}$. The argument follows the one above the equation (\ref{eq_even_component_functions}). One has
        \begin{equation}
        \begin{split}
            \ol{\ds}_{0}^{\ast} \circ (1_{\M \times \R} \otimes & \partial_{\tau'})^{r} \circ \sigma^{\ast} \circ (\theta^{Y} \times \1_{\R})^{\ast} \circ (\theta^{X})^{\ast} = \\
            = & \ \ol{\ds}_{0}^{\ast} \circ \sigma^{\ast} \circ ((1 \otimes \partial_{\tau'})^{r} \otimes 1) \circ (\theta^{Y} \times \1_{\R})^{\ast} \circ (\theta^{X})^{\ast} \\
            = & \ (\ds_{0} \times \1_{\R})^{\ast} \circ (\theta^{Y} \times \1_{\R})^{\ast} \circ (Y \otimes 1)^{r} \circ (\theta^{X})^{\ast} \\
            = & \ (Y \otimes 1)^{r} \circ (\theta^{X})^{\ast}. 
        \end{split}
        \end{equation}
        On the other hand, one finds
        \begin{equation}
        \begin{split}
            \ol{\ds}_{0}^{\ast} \circ (1_{\M \times \R} \otimes \partial_{\tau'})^{r} \circ (\theta^{X} \times \1_{\R[-\ell]})^{\ast} \circ (\theta^{Y})^{\ast} = & \ (\theta^{X})^{\ast} \circ \ds_{0}^{\ast} \circ (1 \otimes \partial_{\tau'})^{r} \circ (\theta^{Y})^{\ast} \\
            = & \ (\theta^{X})^{\ast} \circ Y^{r}. 
        \end{split}
        \end{equation}
        We have thus argued that the commutativity of the diagram (\ref{eq_commutingdiagram}) is equivalent to the equation
        \begin{equation}
            (Y \otimes 1)^{r} \circ (\theta^{X})^{\ast} = (\theta^{X})^{\ast} \circ Y^{r}
        \end{equation}
        being true for all $r \in \N_{0}$. This is obviously equivalent to $Y$ being invariant under the flow $\theta^{X}$. Hence by Theorem \ref{thm_invariance}, this is equivalent to $[X,Y] = 0$. This finishes the proof.

        To prove the case where $X$ is of a non-zero degree $k \neq 0$, simply observe that the statement of Theorem \ref{thm_commuting} holds for $X$ and $Y$, iff it holds for $Y$ and $X$. This follows immediately from the fact that $\sigma^{-1} = \sigma$, so the diagram (\ref{eq_commutingdiagram}) for $X$ and $Y$ immediately gives the diagram for $Y$ and $X$, and vice versa. Since $[X,Y] = 0$ iff $[Y,X] = 0$, the observation follows.
        \end{document}